\documentclass[reqno]{amsart}
\usepackage[left=1in, right=1in, top=1in]{geometry}

\usepackage{etoolbox}

\makeatletter
\let\old@tocline\@tocline
\let\section@tocline\@tocline
\newcommand{\section@dotsep}{4.5}
\newcommand{\subsection@dotsep}{4.5}
\patchcmd{\@tocline}
  {\hfil}
  {\nobreak
     \leaders\hbox{$\m@th
        \mkern \section@dotsep mu\hbox{.}\mkern \section@dotsep mu$}\hfill
     \nobreak}{}{}
\let\section@tocline\@tocline
\let\@tocline\old@tocline

\patchcmd{\@tocline}
  {\hfil}
  {\nobreak
     \leaders\hbox{$\m@th
        \mkern \subsection@dotsep mu\hbox{.}\mkern \subsection@dotsep mu$}\hfill
     \nobreak}{}{}
\let\subsection@tocline\@tocline
\let\@tocline\old@tocline

\let\old@l@section\l@section
\let\old@l@subsection\l@subsection

\def\@tocwriteb#1#2#3{%
  \begingroup
    \@xp\def\csname #2@tocline\endcsname##1##2##3##4##5##6{%
      \ifnum##1>\c@tocdepth
      \else \sbox\z@{##5\let\indentlabel\@tochangmeasure##6}\fi}%
    \csname l@#2\endcsname{#1{\csname#2name\endcsname}{\@secnumber}{}}%
  \endgroup
  \addcontentsline{toc}{#2}%
    {\protect#1{\csname#2name\endcsname}{\@secnumber}{#3}}}%

\newlength{\@tocsectionindent}
\newlength{\@tocsubsectionindent}
\newlength{\@tocsubsubsectionindent}
\newlength{\@tocsectionnumwidth}
\newlength{\@tocsubsectionnumwidth}
\newlength{\@tocsubsubsectionnumwidth}
\newcommand{\settocsectionnumwidth}[1]{\setlength{\@tocsectionnumwidth}{#1}}
\newcommand{\settocsubsectionnumwidth}[1]{\setlength{\@tocsubsectionnumwidth}{#1}}
\newcommand{\settocsubsubsectionnumwidth}[1]{\setlength{\@tocsubsubsectionnumwidth}{#1}}
\newcommand{\settocsectionindent}[1]{\setlength{\@tocsectionindent}{#1}}
\newcommand{\settocsubsectionindent}[1]{\setlength{\@tocsubsectionindent}{#1}}
\newcommand{\settocsubsubsectionindent}[1]{\setlength{\@tocsubsubsectionindent}{#1}}

\renewcommand{\l@section}{\section@tocline{1}{\@tocsectionvskip}{\@tocsectionindent}{}{\@tocsectionformat}}%
\renewcommand{\l@subsection}{\subsection@tocline{1}{\@tocsubsectionvskip}{\@tocsubsectionindent}{}{\@tocsubsectionformat}}%
\renewcommand{\l@subsubsection}{\subsubsection@tocline{1}{\@tocsubsubsectionvskip}{\@tocsubsubsectionindent}{}{\@tocsubsubsectionformat}}%
\newcommand{\@tocsectionformat}{}
\newcommand{\@tocsubsectionformat}{}
\newcommand{\@tocsubsubsectionformat}{}
\expandafter\def\csname toc@1format\endcsname{\@tocsectionformat}
\expandafter\def\csname toc@2format\endcsname{\@tocsubsectionformat}
\expandafter\def\csname toc@3format\endcsname{\@tocsubsubsectionformat}
\newcommand{\settocsectionformat}[1]{\renewcommand{\@tocsectionformat}{#1}}
\newcommand{\settocsubsectionformat}[1]{\renewcommand{\@tocsubsectionformat}{#1}}
\newcommand{\settocsubsubsectionformat}[1]{\renewcommand{\@tocsubsubsectionformat}{#1}}
\newlength{\@tocsectionvskip}
\newcommand{\settocsectionvskip}[1]{\setlength{\@tocsectionvskip}{#1}}
\newlength{\@tocsubsectionvskip}
\newcommand{\settocsubsectionvskip}[1]{\setlength{\@tocsubsectionvskip}{#1}}
\newlength{\@tocsubsubsectionvskip}
\newcommand{\settocsubsubsectionvskip}[1]{\setlength{\@tocsubsubsectionvskip}{#1}}

\patchcmd{\longrightarrowcsection}{\indentlabel}{\makebox[\@tocsectionnumwidth][l]}{}{}
\patchcmd{\longrightarrowcsubsection}{\indentlabel}{\makebox[\@tocsubsectionnumwidth][l]}{}{}
\patchcmd{\longrightarrowcsubsubsection}{\indentlabel}{\makebox[\@tocsubsubsectionnumwidth][l]}{}{}

\newcommand{\@sectypepnumformat}{}
\renewcommand{\contentsline}[1]{%
  \expandafter\let\expandafter\@sectypepnumformat\csname @toc#1pnumformat\endcsname%
  \csname l@#1\endcsname}
\newcommand{\@tocsectionpnumformat}{}
\newcommand{\@tocsubsectionpnumformat}{}
\newcommand{\@tocsubsubsectionpnumformat}{}
\newcommand{\setsectionpnumformat}[1]{\renewcommand{\@tocsectionpnumformat}{#1}}
\newcommand{\setsubsectionpnumformat}[1]{\renewcommand{\@tocsubsectionpnumformat}{#1}}
\newcommand{\setsubsubsectionpnumformat}[1]{\renewcommand{\@tocsubsubsectionpnumformat}{#1}}
\renewcommand{\@tocpagenum}[1]{%
  \hfill {\mdseries\@sectypepnumformat #1}}

\let\oldappendix\appendix
\renewcommand{\appendix}{%
  \leavevmode\oldappendix%
  \addtocontents{toc}{%
    \protect\settowidth{\protect\@tocsectionnumwidth}{\protect\@tocsectionformat\sectionname\space}%
    \protect\addtolength{\protect\@tocsectionnumwidth}{2em}}%
}
\makeatother

\makeatletter
\settocsectionnumwidth{2em}
\settocsubsectionnumwidth{2.5em}
\settocsubsubsectionnumwidth{3em}
\settocsectionindent{1pc}%
\settocsubsectionindent{\dimexpr\@tocsectionindent+\@tocsectionnumwidth}%
\settocsubsubsectionindent{\dimexpr\@tocsubsectionindent+\@tocsubsectionnumwidth}%
\makeatother

\settocsectionvskip{0pt}
\settocsubsectionvskip{0pt}
\settocsubsubsectionvskip{0pt}

\usepackage{mathrsfs}
\usepackage{mathrsfs}
\usepackage{amsfonts}
\usepackage{comment}
\usepackage{cases}
\usepackage{latexsym}
\usepackage{amsmath}
\usepackage[all]{xy}
\usepackage[pdftex]{graphicx}
\usepackage[colorinlistoftodos]{todonotes}
\usepackage{stmaryrd}
\usepackage{amsfonts}
\usepackage{amsmath,amssymb,amscd,bbm,amsthm,mathrsfs,dsfont}
\usepackage[most]{tcolorbox}
\usepackage[notref, draft, notcite]{showkeys}

\usepackage{tikz}

\usepackage{pgflibraryarrows}
\usepackage{pgflibrarysnakes}

\usepackage[numbers,sort&compress]{natbib}
\usepackage[pagebackref]{hyperref}
\usepackage{hypernat}

\usepackage{color, hyperref}
\definecolor{pink}{rgb}{0.9,0.0,0.9}
\hypersetup{colorlinks, breaklinks,
            linkcolor=red,urlcolor=pink,
            anchorcolor=blue, citecolor=pink}

\usepackage{fancyhdr}
\usepackage{amsxtra,ifthen}
\usepackage{verbatim}

\usepackage{tikz,xcolor,hyperref}
\definecolor{lime}{HTML}{A6CE39}
\DeclareRobustCommand{\orcidicon}{
\begin{tikzpicture}
\draw[lime, fill=lime] (0,0)
circle[radius=0.16]
node[white]{{\fontfamily{qag}\selectfont \tiny \.{I}D}}; 
\end{tikzpicture}
\hspace{-2mm}
}
\foreach \x in {A, ..., Z}{%
\expandafter\xdef\csname orcid\x\endcsname{\noexpand\href{https://orcid.org/\csname orcidauthor\x\endcsname}{\noexpand\orcidicon}}
}

\usepackage{enumitem}

\usepackage{fancyhdr}
\usepackage{amsxtra,ifthen}
\usepackage{verbatim}

\numberwithin{equation}{section}

\usepackage{extarrows}
\newcommand{\eqdef}{\xlongequal{\text{def}}}

\theoremstyle{plain}
\newtheorem{theorem}{Theorem}[section]
\newtheorem{lemma}[theorem]{Lemma}
\newtheorem{proposition}[theorem]{Proposition}

\newtheorem{corollary}[theorem]{Corollary}

\theoremstyle{definition}

\newtheorem{remark}[theorem]{Remark}

\allowdisplaybreaks[4]

\makeatletter              
\let\c@equation\c@theorem  
\makeatother

\def \lim{\hbox{\lower0.9ex\hbox{$\buildrel{\lower1.1ex\hbox{lim}}
\over{\textstyle \leftarrow}$}}\,}

\title[Characterization of $n$-Lie Derivations on Generalized Matrix Algebras]
{Characterization of $n$-Lie Derivations on Generalized Matrix Algebras}

\author{Xinfeng Liang\hspace{-1.5mm}\orcidA{}}
\address{Liang: School of Mathematics and Big data, Anhui University of Science \& Technology, 232001, Huainan, P. R. China}
\email{\href{mailto:xfliang@aust.edu.cn}{xfliang@aust.edu.cn}}

\author{Minghao Wang}
\address{Wang: School of Mathematics and Big data, Anhui University of Science \& Technology, 232001, Huainan, P. R. China}
\email{\href{mailto:mhwang2023@163.com}{mhwang2023@163.com}}

\author{Feng Wei\hspace{-1.5mm}\orcidC{}}\thanks{*Corresponding Author}
\address{Wei$^\ast$ (Corresponding Author): School of Mathematics and Statistics, Beijing
Institute of Technology, Beijing, 100081, P. R. China}
\email{\href{mailto:daoshuo@hotmail.com}{daoshuo@hotmail.com}} \email{\href{maito:daoshuowei@gmail.com}{daoshuowei@gmail.com}}

\begin{document}

\begin{abstract}
 \noindent  The principal objective of this paper is to determine the structure of $n$-Lie derivations ($n\geq 3$) on generalized matrix algebras.
It is shown that under certain mild assumptions, every $n$-Lie derivation can be decomposed
into the sum of an extremal $n$-derivation and an $n$-linear central-valued mapping.
As direct applications, we provide complete characterizations of $n$-Lie derivations
on both full matrix algebras and triangular algebras.
\end{abstract}

\date{\today}

\subjclass[2010]{16W25, 15A78, 47L35.}

\keywords{$n$-Lie derivation, extremal $n$-derivation, generalized matrix algebra}

\maketitle


\section{Introduction}
\label{xxsec1}

Let $\mathcal{A}$ be an associative algebra defined over a commutative ring
$\mathcal{R}$, and denote the center of $\mathcal{A}$ by $\mathcal{Z}(\mathcal{A})$.
Throughout this paper, we assume that the ring $\mathcal{R}$ is $2$-torsionfree.
For the reader's convenience, we need to state and introduce some concrete linear mappings. A linear mapping
$\rho: \mathcal{A}\longrightarrow \mathcal{A}$ satisfying the relation
$$
\rho(xy)=\rho(x)y+x\rho(y) ~(\text{resp.}~ \rho([x,y])=[\rho(x),y]+[x,\rho(y))]
$$
is called a \textit{derivation} (resp. a \textit{Lie derivation}), where 
$[x,y]=xy-yx$ is the usual Lie product
for all $x,y\in \mathcal{A}$. Let $n$ be a positive integer. For an $n$-linear mapping
$\varrho:\underbrace{\mathcal{A}\times\cdots\times\mathcal{A}}_n\longrightarrow \mathcal{A}$,
if each component is a derivation (resp. Lie derivation), it is referred to as an
\textit{$n$-derivation} (resp. \textit{$n$-Lie derivation}).
Clearly, biderivations (resp. bi-Lie derivations) are special cases of
$n$-derivations (resp.  $n$-Lie derivations) whenever $n=2$.
It is clear that each derivation is a Lie derivation, and thus every
$n$-derivation is an $n$-Lie derivation, but the converse statements are not in general true. If an
$n$-derivation $\varrho:\underbrace{\mathcal{A}\times\cdots\times\mathcal{A}}_n\longrightarrow \mathcal{A}$
can be expressed in the form $\varrho(x_1,\cdots, x_n)=\varrho(x_{w(1)},\cdots, x_{w(n)})$, it is called a \textit{permuting
$n$-derivation} for all $x_1,\cdots, x_n\in \mathcal{A}$ and $w\in \mathfrak{S}_n  $, where
$\mathfrak{S}_n$ is the symmetric group of order $n$. In particular, whenever $n=2$, 
a permuting $2$-derivation is also known as a \textit{symmetric biderivation}.

The notion of biderivation was proposed by Maksa \cite{Maksa1980, Maksa1987},
it became an essential technique in the functional identity theory of noncommutative algebras since then.
There has been increasing interests in determining the structures
of biderivations on various associative algebras, see \cite{Benkovic2009, DuWang3, Eremita2017, Ghosseiri2013, Ghosseiri2017, 
LiangWei, LiuLiu2021, RenLiang2022, Wang2016}. Simultaneously, plenty of works are contributed to 
exploring biderivations on different types Lie and Leibniz algebras, see \cite{BresarZhao2018, ChangChen, ChangChenZhou1, ChangChenZhou2, ChenYaoZhao, 
DiLaRosa, DingTang, Eremita2021, HanWangXia, LiuGuoZhao, Mancini, Tang, TangLi, TangZhong, WangYu, WangYuChen}, 
which are exactly corresponding to bi-Lie derivations on associative algebras. 
For a detailed exposition on biderivations, we refer the reader to \cite{Bresar2004}.

For an arbitrary non-commutative algebra $\mathcal{A}$,
inner biderivations are of considerable importance,
as they serve as a measure tool for the non-commutativity of
$\mathcal{A}$, see \cite{LiangZhao2023,Benkovic2009,Jabeen2022,AlghazzawiArifSobhi2023}. 
A bilinear mapping
$\sigma:\mathcal{A}\times\mathcal{A}\longrightarrow \mathcal{A}$ is said to be an \textit{inner biderivation} if it is of the form 
$\sigma(x,y)=\lambda[x,y]$ for all $x,y\in \mathcal{A}$ and some $\lambda\in \mathcal{Z}(\mathcal{A})$. 
A natural common generalization of inner biderivations is the so-called extremal
$n$-derivations. An $n$-linear mapping $\upsilon:\underbrace{\mathcal{A}\times \cdots \times \mathcal{A}}_n\longrightarrow \mathcal{A}$ is said to be an 
\textit{extremal $n$-derivation} if it satisfies the relation 
$\upsilon(x_1,\cdots, x_n)=[x_1, \cdots, [x_n, x_0]\cdots]$ for all $x_1, \cdots, x_n\in \mathcal{A}$ and
some $x_0\notin \mathcal{Z}(\mathcal{A})$ with $[[\mathcal{A}, \mathcal{A}], x_0]=0$. 
The extremal $n$-derivations are indispensable tools when people embark on investigating the structures of
multilinear Lie-type derivations, see \cite{WangWangDu, Jabeen2022, Jabeen2024, LiangGuo2024}.

In recent years, some people paid their special attentions on multilinear derivations on triangular or generalized matrix algebras. 
Notably, it was Benkovi\v c who first proposed the concept of extremal biderivations and initiated the 
study of biderivations on triangular algebras \cite{Benkovic2009}. It was shown that under certain assumptions, 
every biderivation of a triangular algebra is the sum of an extremal biderivation and an inner biderivation. 
This result is immediately applied to (block) upper triangular matrix algebras and Hilbert space nest algebras.
Wang \cite{Wang2016} also addressed the decomposition question of biderivations on triangular rings 
and obtained the same decomposition with the former. However, Wang employed the techniques from the maximal left ring of quotients 
\cite{Utumi1956, Eremita2015}. Jabeen \cite{Jabeen2022} extended Benkovi\v c's and Wang's results the background  of
$n$-Lie derivations ($n\geq3$). She first gave a deep insight towards the structure of $3$-Lie derivations on a triangular algebra. 
And then, using mathematical induction method, she observed that under certain restrictions, 
every $n$-Lie derivation ($n\geq 3$) on a triangular algebra split into an extremal $n$-derivation and an $n$-linear central mapping. 
People gradually turn their views to multilinear derivations on generalized matrix algebras.  
Du and Wang \cite{DuWang3} demonstrated the structure of biderivations on generalized matrix algebras. 
It was shown that under certain conditions, very biderivation on a generalized matrix algebra is the sum of an 
extremal and an inner biderivation. They also considered the question when a biderivation on a generalized 
matrix algebra is an inner biderivation. As a consequence they proved that every biderivation of a 
full matrix algebra over a unital algebra is inner. In \cite{Jabeen2024, WangWangDu}, Jabeen and Wang etal independently  
investigated the question of when an $n$-derivations ($n\geq3$) on a triangular algebra (or generalized matrix algebra) 
will be an extremal $n$-derivation.
At the same time, multilinear Lie-type derivations have also attracted our attentions. Motivated by the previous works, 
the first and third authors \cite{LiangWei} characterized the structure of bi-Lie derivations (also known as $2$-Lie derivations)
on triangular algebras. Under some midl assumptions, every bi-Lie derivation can be decomposed into 
the sum of an inner biderivation, an extremal biderivation and a central bilinear mapping.

Taking into accounts the results regarding to multilinear derivations and Lie derivations on triangular algebras (or generalized matrix algebras)
\cite{Benkovic2009, DuWang3, LiangWei, Jabeen2022, Jabeen2024, Wang2016, WangWangDu},
people naturally ask: what can we say about the multilinear Lie derivations
on generalized matrix algebras ? This is our principal objective of the current work.   
The first step is to describe the structure of $3$-Lie derivations
on generalized matrix algebras from two different point of views. Under distinct assumptions, it is shown that every $3$-Lie derivation is the sum of
an extremal $3$-derivation and a $3$-linear central-valued mapping. 
Then by mathematical induction approach, we eventually achieve the structure characterization of
$n$-Lie derivations, showing that every $n$-Lie derivation is the sum of an extremal
$n$-derivation and an $n$-linear central-valued mapping. It is worthy to note that we determine the structure 
of $n$-Lie derivations on generalized matrix algebras under different perspectives. 
Roughly speaking, our main results are as follows.








\begin{tcolorbox}[breakable, enhanced, blanker, left=3mm,right=3mm,
borderline west={2pt}{0pt}{cyan}]

 \newtheorem*{theoremA}{Theorem A}
\begin{theoremA} {\rm (Proposition \ref{xxsec3.1} + Theorem \ref{xxsec4.1})}
Let $\mathcal{G}=\left[
\begin{array}
[c]{cc}%
A & M\\
N & B\\
\end{array}
\right]$ be a generalized matrix algebrabe over a commutative ring $\mathcal{R}$ and
$\varphi:\underbrace{\mathcal{G}\times \cdots \times \mathcal{G}}_n\longrightarrow\mathcal{G}$ be a
$n$-Lie derivation on $\mathcal{G}\, \, (n\geq 3)$. Suppose that $\mathcal{G}$ satisfies the following conditions:
\begin{enumerate}
\item [{\rm (1)}] $\pi_{A}(\mathcal{Z}(\mathcal{G}))=\mathcal{Z}(A)$ and $\pi_{B}(\mathcal{Z}(\mathcal{G}))=\mathcal{Z}(B);$
\item [{\rm (2)}] either $A$ or $B$ does not contain nonzero central ideals;
\item [{\rm (3)}] if $\alpha a = 0, \alpha\in \mathcal{Z}(\mathcal{G}), 0 \neq a \in \mathcal{G}$, then $\alpha = 0;$
\item [{\rm (4)}] If $MN = 0 = NM$,
            then at least one of the algebras $A$ and $B$ is noncommutative;
\item [{\rm (5)}] every special pair of bimodule homomorphisms has standard form.
\end{enumerate}
Then $\varphi$ is of the form $\varphi=\kappa+\psi$,
where $\kappa$ is an extremal $n$-derivation such that
$\kappa(x_1,x_2,\cdots,x_n) =[x_1,[x_2,[\cdots,[x_n, X_0]\cdots]]]$ for all $x_1, x_2, \cdots, x_n\in \mathcal{G}$, 
$\psi$ is an $n$-linear central-valued mapping vanishing on commutators in each component, 
$X_0=e\varphi(e,e,\cdots,e)f+(-1)^{n}f\varphi(e,e,\cdots,e)e$. 
\end{theoremA}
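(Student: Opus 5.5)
The plan is an induction on $n$, with the case $n=3$ (Proposition \ref{xxsec3.1}) as the base and Theorem \ref{xxsec4.1} as the inductive step.

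\emph{Base case $n=3$.} Write $e,f$ for the standard idempotents of $\mathcal{G}$, and fix two of the three arguments. Then $x\mapsto\varphi(x,y,z)$ is a Lie derivation of $\mathcal{G}$. Under hypotheses (1)--(5), the known structure theory of Lie derivations on generalized matrix algebras (the results of Benkovi\v c and Du--Wang type) says every Lie derivation is standard: it is $d+\tau$ with $d$ a derivation and $\tau$ a central-valued linear map vanishing on commutators. Hypothesis (5), on special pairs of bimodule homomorphisms, is exactly what makes this work.

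Applying this componentwise, we first compute $\varphi$ on the Peirce pieces $e,f,M,N$.
\begin{enumerate}
\item Use $[e,m]=m$ for $m\in M$ and $[n,e]=n$ for $n\in N$ to get $\varphi(m,y,z)=[\varphi(e,y,z),m]+[e,\varphi(m,y,z)]$, and similarly in the other slots.
\item Comparing Peirce components, the off-diagonal parts are controlled by $e\varphi(e,e,e)f$ and $f\varphi(e,e,e)e$.
\item The diagonal parts are controlled by an element whose $A$- and $B$-components lie in $\pi_A(\mathcal{Z}(\mathcal{G}))$ and $\pi_B(\mathcal{Z}(\mathcal{G}))$, which is where (1) enters.
\end{enumerate}
Set $X_0=e\varphi(e,e,e)f-f\varphi(e,e,e)e$ (the sign is $(-1)^3$) and $\kappa(x,y,z)=[x,[y,[z,X_0]]]$.

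We then show $\psi=\varphi-\kappa$ is central-valued. It is still a $3$-Lie derivation, and its off-diagonal "inner" parts are killed. Hypotheses (2) and (3) are used to show that a map which is a derivation into the center and vanishes on $e$ in each slot must be central and vanish on commutators. The absence of nonzero central ideals, together with torsion-freeness of $\alpha$, forces the relevant $\mathcal{Z}(\mathcal{G})$-valued pieces $\alpha$ with $\alpha[A,A]=0$ to vanish.

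Next we prove $[[\mathcal{G},\mathcal{G}],X_0]=0$. This follows from comparing $\varphi([x,y],z,w)$ computed via the first slot and via the Lie derivation rule. Hypothesis (4) guarantees $X_0\notin\mathcal{Z}(\mathcal{G})$ unless $\kappa=0$, so $\kappa$ is a genuine extremal derivation (or zero).

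\emph{Inductive step.} Assume the result for $n-1$. Fix $x_n$; then $(x_1,\dots,x_{n-1})\mapsto\varphi(x_1,\dots,x_n)$ is an $(n-1)$-Lie derivation. It therefore equals
\[
[x_1,[\cdots,[x_{n-1},X_0(x_n)]\cdots]]+\psi_{x_n}(x_1,\dots,x_{n-1}),
\]
with $X_0(x_n)=e\varphi(e,\dots,e,x_n)f+(-1)^{n-1}f\varphi(e,\dots,e,x_n)e$.

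The map $x_n\mapsto\varphi(e,\dots,e,x_n)$ is a Lie derivation. Using that, together with $[e,\cdot]$ acting as $\pm\mathrm{id}$ on $M$ and $N$, we show that its off-diagonal part satisfies
\[
eX_0(x_n)f+fX_0(x_n)e=[x_n,\;e\varphi(e,\dots,e)f+(-1)^nf\varphi(e,\dots,e)e]
\]
modulo terms annihilated by the nested commutators. This identity absorbs $x_n$ into one more commutator and gives exactly $X_0$ with the sign $(-1)^n$. The leftover $\psi$ is then central-valued and vanishes on commutators in each slot, since it is so in the first $n-1$ slots for each fixed $x_n$. Symmetry of the roles of the slots handles the last slot.

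The main obstacle is the base case, specifically proving that the residual map $\psi$ is central-valued. This needs careful Peirce-component bookkeeping of $\varphi$ on pairs like $(a,b)$ with $a\in A$, $b\in B$, and the use of (2) and (5) to kill the non-standard parts. I would follow the argument of Du--Wang for biderivations, adapted componentwise.
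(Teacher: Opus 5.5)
Your architecture (Peirce bookkeeping around $m_0=e\varphi(e,e,e)f$ and $n_0=f\varphi(e,e,e)e$, then $X_0=m_0-n_0$, a central remainder, and induction on $n$) is the paper's. However, the base case as you sketch it does not go through.

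\textbf{Base case.} The order of your steps is circular. ``$\psi=\varphi-\kappa$ is still a $3$-Lie derivation'' holds only once $\kappa$ is a $3$-derivation, i.e.\ once $[[\mathcal{G},\mathcal{G}],X_0]=0$ is known, and you prove that afterwards. Nor does that condition come out of a generic comparison of $\varphi([x,y],z,w)$. Already $[[m,n],m_0]=mnm_0+m_0nm$, so you need $m_0N=Nm_0=0$ and $Mn_0=n_0M=0$, besides $[A,A]m_0=0=m_0[B,B]$ and the analogues for $n_0$. The paper proves these first, for $\varphi$ itself (Claim~4 in Proposition~\ref{xxsec3.7}):
\begin{enumerate}
\item computing $e\varphi(e,e,am)e$ and $e\varphi(e,e,na)e$ in two ways shows that $Mn_0$ and $m_0N$ are central ideals of $A$, hence zero by (2);
\item $\eta$ together with faithfulness of $M$ then gives $n_0M=0=Nm_0$.
\end{enumerate}

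The mechanism that makes the remainder central is also missing. The maps $\xi(m)=\psi(e,e,m)$ and $\zeta(n)=\psi(e,e,n)$ are bimodule maps, and $\{\xi(m)n+m\zeta(n)\}$ is a central ideal of $A$. So $(\xi,\zeta)$ is a special pair, and (1),(5) give $\xi(m)=\theta_0m$. Then $\theta_0=0$:
\begin{enumerate}
\item via (2),(3) when $MN\neq0$ or $NM\neq0$;
\item via the identity (LB) with some $[a_1,a_2]\neq0$, plus faithfulness of $M$ and (3), when $MN=0=NM$. This is where (4) is used.
\end{enumerate}
The same pattern recurs for $\psi(e,y_1,m)$ with $y_1\in M$, and for $\psi(m,m_1,m_2)$.

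Your use of (4) to ensure $X_0\notin\mathcal{Z(G)}$ is off target: an off-diagonal central element is $0$, so no hypothesis is needed there. Your appeal to ``Lie derivations are standard under (1)--(5)'' is neither proved nor used in the paper. Even granted, it yields slotwise splittings $d_{y,z}+\tau_{y,z}$ that are not canonical in $(y,z)$ and do not produce $X_0$.

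\textbf{Inductive step.} The identity $X_0(x_n)=[x_n,m_0+(-1)^nn_0]$ does not follow from the Lie-derivation property of the last slot. It does for $x_n\in A\cup B$. For $x_n=m\in M$, however, it says $e\varphi(e,\ldots,e,m)f=0$. In the $e(\cdot)f$-component of
$\varphi(e,\ldots,e,[e,m])=[\varphi(e,\ldots,e),m]+[e,\varphi(e,\ldots,e,m)]$
this term cancels from both sides. It is also not ``annihilated by the nested commutators'', since $[e,[e,m']]=m'$. Its vanishing is precisely the hard special-pair step of the base case.

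The paper instead applies the induction hypothesis a second time, to the $(n-1)$-Lie derivation $(x_2,\ldots,x_n)\mapsto\varphi(e,x_2,\ldots,x_n)$. It then specializes the middle slots to $e$ and uses $[x,m_0]\in M$ and $[x,n_0]\in N$. These again follow from $m_0N=Nm_0=0=n_0M=Mn_0$, obtainable from the base case applied to $\varphi(\cdot,\cdot,\cdot,e,\ldots,e)$. With that replacement, the rest of your induction goes through: the remainder is central-valued, and it vanishes on commutators because a central-valued $n$-Lie derivation kills $[x,y]$ in each slot.
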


\end{tcolorbox}

\begin{tcolorbox}[breakable, blanker,left=3mm,right=3mm,
borderline west={2pt}{0pt}{purple}]

 \newtheorem*{theoremB}{Theorem B}
\begin{theoremB}[Proposition \ref{xxsec3.17} + Theorem \ref{xxsec4.3}]
Let $\mathcal{G}=\left[
\begin{array}
[c]{cc}%
A & M\\
N & B\\
\end{array}
\right]$ be a generalized matrix algebra over a commutative ring $\mathcal{R}$
and $\varphi:\underbrace{\mathcal{G}\times \cdots \times \mathcal{G}}_n\longrightarrow\mathcal{G}$ be an
$n$-Lie derivation on $\mathcal{G}\, \, (n\geq 3)$. Suppose that
\begin{enumerate}
\item[{\rm (1)}] $\pi_{A}(\mathcal{Z}(\mathcal{G}))=\mathcal{Z}(A)$ and $\pi_{B}(\mathcal{Z}(\mathcal{G}))=\mathcal{Z}(B);$
\item[{\rm (2)}] either $A$ or $B$ does not contain nonzero central ideals;
\item[{\rm (3)}] For each $n\in N$, the condition $Mn = 0 = n M$ implies $n=0$;
\item[{\rm (4)}] For each $m\in M$, the condition $Nm = 0 = m N$ implies $m=0$;
\item[{\rm (5)}] each special pair of bimodule homomorphisms has standard form.
\end{enumerate}
Then $\varphi$ has the form $\varphi=\kappa+\psi$, where $\kappa$ is an extremal $n$-derivation such that
$\kappa(x_1,x_2,\cdots,x_n) =[x_1,[x_2,[\cdots,[x_n, X_0]\cdots]]]$ for all $x_1, x_2, \cdots, x_n\in \mathcal{G}$, 
$\psi$ is an $n$-linear central-valued mapping vanishing on commutators in each component, 
$X_0=e\varphi(e,e,\cdots,e)f+(-1)^{n}f\varphi(e,e,\cdots,e)e$. 
\end{theoremB}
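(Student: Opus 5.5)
The plan is to follow the two-step architecture announced in the statement: first treat the case $n=3$ under hypotheses (1)--(5) of Theorem B (Proposition \ref{xxsec3.17}), then pass to general $n\geq 3$ by induction on $n$ (Theorem \ref{xxsec4.3}).

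\textbf{Base case $n=3$.} Fix $x_2,x_3$. The map $x_1\mapsto\varphi(x_1,x_2,x_3)$ is a Lie derivation of $\mathcal{G}$. Under (1), (2) and the faithfulness conditions (3), (4) on $M$ and $N$, the known structure theorem for Lie derivations on generalized matrix algebras should apply; condition (5) is what makes that theorem hold. It gives
$$\varphi(x_1,x_2,x_3)=d_{x_2,x_3}(x_1)+h_{x_2,x_3}(x_1),$$
with $d_{x_2,x_3}$ a derivation and $h_{x_2,x_3}$ central-valued and vanishing on commutators. The same holds in each of the other slots.

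Next I would compute the Peirce components $e\varphi(\cdot)e$, $e\varphi(\cdot)f$, $f\varphi(\cdot)e$, $f\varphi(\cdot)f$ on the generators $e$, $f$, $a\in A$, $m\in M$, $n\in N$, $b\in B$, using the Lie derivation identity in each component. Typical steps are evaluating $\varphi([e,m],\ldots)=\varphi(m,\ldots)$ and $\varphi([a,m],\ldots)$. The aim is to show that the off-diagonal parts of $\varphi$ are determined by $e\varphi(e,e,e)f$ and $f\varphi(e,e,e)e$, through the formula $[x_1,[x_2,[x_3,X_0]]]$. Here $X_0=e\varphi(e,e,e)f-f\varphi(e,e,e)e$ satisfies $[[\mathcal{G},\mathcal{G}],X_0]=0$. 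Conditions (3) and (4) enter exactly here: to conclude that an element $m_0\in M$ or $n_0\in N$ vanishes, or is forced to the right form, one shows it is annihilated by $N$ or $M$ on both sides. This replaces the torsion-type condition (3) of Theorem A.

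Then set $\psi=\varphi-\kappa$. I would check that $\psi$ maps into $\{a+b: a\in\mathcal{Z}(A), b\in\mathcal{Z}(B)\}$ with $a\oplus b$ central. Condition (1) lifts diagonal central pieces to $\mathcal{Z}(\mathcal{G})$, and (2) kills the leftover central ideal terms; this is the standard Benkovi\v c--Du--Wang argument. Finally, vanishing of $\psi$ on commutators in each slot follows from $\psi([x,y],\ldots)=[\psi(x,\ldots),y]+[x,\psi(y,\ldots)]=0$.

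\textbf{Inductive step.} For $n>3$, fix $x_n$. The map $(x_1,\dots,x_{n-1})\mapsto\varphi(x_1,\dots,x_{n-1},x_n)$ is an $(n-1)$-Lie derivation, so by induction it equals $[x_1,[\cdots,[x_{n-1},X_0(x_n)]\cdots]]+\psi_{x_n}(x_1,\dots,x_{n-1})$, where
$$X_0(x_n)=e\varphi(e,\dots,e,x_n)f+(-1)^{n-1}f\varphi(e,\dots,e,x_n)e.$$
The map $x_n\mapsto X_0(x_n)$ is essentially a Lie derivation composed with Peirce projections. Using the base-case analysis in the last variable, one shows $X_0(x_n)=[x_n,X_0]$ modulo terms killed by the iterated commutator, with $X_0=e\varphi(e,\dots,e)f+(-1)^nf\varphi(e,\dots,e)e$. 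The sign change comes from $[e,m]=m$ and $[e,n]=-n$. Centrality and the commutator-vanishing property of $\psi$ then follow as in the base case.

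\textbf{Main obstacle.} I expect the main difficulty to be the base-case computation showing that the off-diagonal data reduce to the single element $X_0$ and that the diagonal remainder is central. This requires using the faithfulness conditions (3) and (4) in place of Theorem A's conditions (3) and (4), and carefully invoking (5) to identify the bimodule homomorphisms that arise from the pair $A$--$B$ coupling.
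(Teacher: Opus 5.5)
\textbf{There is a gap in the base case.} Your overall architecture is the paper's. You do a Peirce-decomposition analysis at $n=3$ that produces $X_0$ with $[[\mathcal{G},\mathcal{G}],X_0]=0$ (Proposition~\ref{xxsec3.7}), then set $\psi=\varphi-\kappa$, then induct.

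The inductive step is fine. It mirrors the paper's, which fixes $x_4,\dots,x_n$, applies the $n=3$ case to the first three slots, and applies the $(n-1)$ case to $\varphi(e,x_2,\dots,x_n)$. Read ``base-case analysis in the last variable'' as applying the $n=3$ result to $(y_1,y_2,y_3)\mapsto\varphi(e,\dots,e,y_1,y_2,y_3)$. You then get $X_0(x_n)=[x_n,X_0]$ exactly, with nothing ``modulo''.

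The gap is at the step that carries all the weight: showing that $\psi$ is central-valued. This is not the standard (1)+(2) argument. The obstruction is that $\psi$ takes off-diagonal values on off-diagonal arguments. Examples are $\psi(e,e,m)\in M$, $\psi(e,m',m)\in M$, $\psi(m,m_1,m_2)\in M$ and their $N$-analogues. These are not reached by lifting central pieces with (1) and discarding central ideals with (2).

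The missing idea is the mechanism of Lemmas~\ref{xxsec3.11}, \ref{xxsec3.18} and \ref{xxsec3.22}:
\begin{enumerate}
\item[(a)] $\xi(m)=\psi(e,e,m)$ and $\zeta(n)=\psi(e,e,n)$ are bimodule homomorphisms.
\item[(b)] $\xi(m)n+m\zeta(n)=e\psi(e,e,[m,n])e$ spans a central ideal of $A$, so it vanishes by (2). Via $\eta$, so does $n\xi(m)+\zeta(n)m$.
\item[(c)] Hence $(\xi,\zeta)$ is a special pair, and (5) gives $\xi(m)=\theta_0m$ and $\zeta(n)=-n\theta_0$.
\item[(d)] Lie-expanding $\psi(e,n,m)$ in each slot shows it is central, with $A$-part $-\theta_0mn$ and $B$-part $n\theta_0m$. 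Then (2) and $\eta$ give $(\theta_0m)N=0=N(\theta_0m)$.
\item[(e)] The hypothesis ``$mN=0=Nm\Rightarrow m=0$'' and faithfulness of $M$ then force $\theta_0=0$.
\end{enumerate}
For $\psi(e,y_1,m)$ with $y_1\in M$, and for $\psi(m,m_1,m_2)$, expanding $\psi(e,y_1,[m,n])$ and $\psi([m,n],m_1,m_2)$ gives the same two-sided annihilation by $N$, once the mixed cases are handled. Your plan contains none of this; ``carefully invoking (5)'' names the tool but not the construction.

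\textbf{Your hypothesis bookkeeping also needs correcting.} The Du--Wang structure theorem for Lie derivations of generalized matrix algebras is proved under (1), (2) and faithfulness of $M$. Condition (5) plays no role there, and the paper never uses that theorem. Your slotwise splitting $d_{x_2,x_3}+h_{x_2,x_3}$ is never used later in your plan. It would not help anyway, since it does not determine the $M\to M$ and $N\to N$ components of $\varphi$, which is exactly what has to be controlled.

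Your remark that (3) and (4) make $m_0$ or $n_0$ vanish is stronger than you seem to realize. Claim~4 in the proof of Proposition~\ref{xxsec3.7} uses only (1), (2) and faithfulness, and gives $m_0N=0=Nm_0$ and $n_0M=0=Mn_0$. Under (3) and (4) this forces $m_0=n_0=0$, so $X_0=0$ and $\kappa=0$ in Theorem~B. This holds for every $n$, by applying it to $\varphi(\cdot,\cdot,\cdot,e,\dots,e)$. So the obstacle you anticipate, reducing the off-diagonal data to $X_0$, is the easy part. The real content is the centrality of $\psi$, which is exactly the step you left open.
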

\end{tcolorbox}







 This paper is organized as follows. The first section \ref{xxsec1} is devoted to a brief introduction of research background
and necessary concepts related to the current topics. Some basic facts and examples of generalized matrix algebras are provided in section \ref{xxsec2}. 
The third section \ref{xxsec3} should be the mainbody of this work and is occupied by the characterization 
of $3$-Lie derivations on generalized matrix algebras (see Proposition \ref{xxsec3.1} and Proposition \ref{xxsec3.17}). 
We would like to point out that we characterize the structure of $3$-Lie derivations from two different perspectives, 
laying the groundwork for the subsequent main theorems. The two main results with respect to $n$-Lie derivations, Theorem \ref{xxsec4.1} and 
Theorem \ref{xxsec4.3}, are stated and presented with solid details in section \ref{xxsec4}. 
Both of them shows that under mild assumptions, every $n$-Lie derivation on a generalized matrix algebra is 
the sum of an extremal $n$-derivation and a central-valued mapping. 
We end up this work with direct applications of the above-mentioned main results.

\vspace{2mm}
\section{Generalized Matrix Algebras and Examples}
\label{xxsec2}

Let us start with the definition of generalized matrix algebras. We also provide some basic facts and examples with respect to  
generalized matrix algebras in this section. 

Let $\mathcal{R}$ be a commutative ring with identity. A \textit{Morita context} is a sextuple $(A, B, M, N, \Phi_{MN},\Psi_{NM})$,
where $A$ and $B$ are unital $\mathcal{R}$-algebras, $_AM_B$ and $_BN_A$ are bimodules, and $\Phi_{MN}: M\underset {B}{\otimes} N\longrightarrow A$ and
$\Psi_{NM}: N\underset {A}{\otimes} M\longrightarrow B$
are bimodule homomorphisms satisfying the following commutative diagrams:
$$
\xymatrix{ M \underset {B}{\otimes} N \underset{A}{\otimes} M
\ar[rr]^{\hspace{8pt}\Phi_{MN} \otimes I_M} \ar[dd]^{I_M \otimes
\Psi_{NM}} && A
\underset{A}{\otimes} M \ar[dd]^{\cong} \\  &&\\
M \underset{B}{\otimes} B \ar[rr]^{\hspace{10pt}\cong} && M }
\hspace{6pt}{\rm and}\hspace{6pt} \xymatrix{ N \underset
{A}{\otimes} M \underset{B}{\otimes} N
\ar[rr]^{\hspace{8pt}\Psi_{NM}\otimes I_N} \ar[dd]^{I_N\otimes
\Phi_{MN}} && B
\underset{B}{\otimes} N \ar[dd]^{\cong}\\  &&\\
N \underset{A}{\otimes} A \ar[rr]^{\hspace{10pt}\cong} && N
\hspace{2pt}.}
$$
We refer the reader to \cite{Morita} about Morita contexts.
Given a Morita context $(A, B, M, N,$ $ \Phi_{MN},
\Psi_{NM})$, the set of formal matrices
$$
\mathcal{G}=\left[
\begin{array}
[c]{cc}%
A & M\\
N & B\\
\end{array}
\right]=\left\{ \left[
\begin{array}
[c]{cc}%
a& m\\
n & b\\
\end{array}
\right] \vline a\in A, m\in M, n\in N, b\in B \right\}
$$
constitutes an $\mathcal{R}$-algebra under matrix-like addition and multiplication,
provided at least one of the bimodules $M$ or $N$ is nonzero. This resulted algebra is 
called a \textit{generalized matrix algebra of order $2$}.
Such kind of construction naturally extends to generalized matrix algebras of order $n>2$.
However, as shown in \cite[Example 2.2]{LW1}, every generalized matrix algebra of order
$n\geq 2$ is, up to isomorphism, equivalent to one of order $2$.
If either $M=0$ or $N=0$, $\mathcal{G}$ will reduce to be an upper triangular algebra
$$\mathcal{T^U}=\mathcal{T}(A, M, B)=
\left[
\begin{array}
[c]{cc}%
A & M\\
O & B\\
\end{array}
\right].
$$
or a lower triangular algebra
$$
\mathcal{T_L}=\mathcal{T}(A, N, B)=
\left[
\begin{array}
[c]{cc}%
A & O\\
N & B\\
\end{array}
\right].
$$
The framework of generalized matrix algebras provides a unifying perspective,
encompassing both triangular algebras and full matrix algebras $\mathcal{M}_n(\mathcal{R})$.
This approach allows us to carry out a systematic study for linear and nonlinear mappings
on these algebras under a single theoretical framework, as demonstrated in \cite{LW1, LW2, LWW, XW1, XW2}.

We briefly list some classical examples of generalized matrix algebras that will
be invoked in later sections (Sections \ref{xxsec3}--\ref{xxsec4}).
As these examples are well-documented in the literature \cite{XW1,XW2,LW1,LW2},
we only mention their names without elaboration.
\begin{enumerate}
\item[{\rm (1)}] Unital algebras with nontrivial idempotents;
\item[{\rm (2)}] Full matrix algebras $\mathcal{M}_{n\times n}(\mathcal{R})$ over a commutative ring $\mathcal{R}$;
\item[{\rm (3)}] Inflated algebras;
\item[{\rm (4)}] Triangular algebras, such as upper or lower triangular matrix algebras, block upper (or lower) triangular matix algebras and nest algebras over a Hilbert space;
\item[{\rm (5)}] Factor von Neumann algebra acting on a Hilbert space;
\item[{\rm (6)}] von Neumann algebra with no central summand of type $I_1$;
\item[{\rm (7)}] The algebra $\mathcal{B}(X)$ of all bounded linear operators over a Banach space $X$;
\item[{\rm (8)}] Standard operator algebras over a Banach space.
\end{enumerate}

These generalized matrix algebras are ubiquitous in associative algebras and 
noncommutative Noetherian algebras, owing to their versatility
and intuitive structural properties. Despite their prevalence,
the investigation of linear mappings on these algebras has not received enough concerns.
The systematic study of such linear mappings was pioneered by Krylov and his teammates,
who approached the subject from a classification perspective \cite{Krylov1}.
This landmark work has inspired numerous contributions,
leading to some significant progresses in related areas
(see \cite{Benkovic, BenkovicSirovnik, DuWang2, DuWang3, LW1, LW2, LWW, WangWang, XW1, XW2}).
Beyond linear mappings of generalized matrix algebras, 
Krylov and his collaborators also investigated their representation theory,
homological behaviors, and $K$-theory aspects, see
\cite{Krylov1, Krylov2, Krylov3, Krylov4, KrylovNor1, KrylovNor2, KrylovTuganbaev1, KrylovTuganbaev2, KrylovTuganbaev3, KrylovTuganbaev4, KrylovTuganbaev5}.

Throughout this paper, we consider the generalized matrix algebra
of order $2$ 
$$
\mathcal{G}=\left[
\begin{array}
[c]{cc}%
A & M\\
N & B
\end{array}
\right], 
$$
which is associated with the Morita context $(A, B, _AM_B, _BN_A, \Phi_{MN}, \Psi_{NM})$. At least, one of the bimodules $M$ or $N$ is nonzero. 
However, we would like to point out that the following computational strategy 
$$
\begin{aligned}
\mathcal{G} & =\left[
\begin{array}
[c]{cc}%
A & M\\
N & B
\end{array}
\right]\\
& =\left[
\begin{array}
[c]{cc}%
A & O\\
O & O
\end{array}
\right]+\left[
\begin{array}
[c]{cc}%
O & M\\
O & O
\end{array}
\right]+ \left[
\begin{array}
[c]{cc}%
O & O\\
N & O
\end{array}
\right]+ \left[
\begin{array}
[c]{cc}%
O & O\\
O & B
\end{array}
\right] \\
& \stackrel{\cong}{=} A+M+N+B
\end{aligned}
$$
is heavily dependent and frequently adopted without specific explanations. 
Let
$$
e=\left[
\begin{array}
[c]{cc}%
1_{A} & 0\\
0 & 0\\
\end{array}
\right]  \, \, \, \, \text{and}  \, \, \, \, 
f=\left[
\begin{array}
[c]{cc}
0 & 0\\
0 & 1_{B}\\
\end{array}
\right]
$$
 be idempotents in the algebra $\mathcal{G}$ with $1_{A}$ and $1_{B}$
being the identity elements of algebras $A$ and $B$, respectively.
These elements satisfy relation $e+f=I$, where $I$ denotes the identity of
$\mathcal{G}$.
We always assume that $M$ is faithful both as a
left $A$-module and as a right $B$-module,
while no additional conditions are imposed on $N$.
The center of $\mathcal{G}$ is defined to be
$$
\mathcal{Z(G)}=\left\{ \left[
\begin{array}
[c]{cc}%
a & 0\\
0 & b
\end{array}
\right] \vline \hspace{3pt} am=mb, \hspace{3pt} na=bn,\ \forall\
m\in M, \hspace{3pt} \forall n\in N \right\}.
$$
Two $\mathcal{R}$-linear canonical projection homomorphisms
$\pi_A:\mathcal{G}\longrightarrow A$ and $\pi_B:\mathcal{G}\longrightarrow
B$ can be established via the mappings:
$$
\pi_A: \left[
\begin{array}
[c]{cc}%
a & m\\
n & b\\
\end{array}
\right] \longmapsto a \quad \text{and} \quad \pi_B: \left[
\begin{array}
[c]{cc}%
a & m\\
n & b\\
\end{array}
\right] \longmapsto b.
$$
By the preceding characterization of $\mathcal{Z}(\mathcal{G})$, it is easy to observe that:
$\pi_A\left(\mathcal{Z(G)}\right)$ is a subalgebra of $\mathcal{Z}(A)$ and
that $\pi_B\left(\mathcal{Z(G)}\right)$ is a subalgebra of $\mathcal{Z}(B)$.
Furthermore, there exists an algebra isomorphism $\eta:\pi_A(\mathcal{Z(G)})\longrightarrow
\pi_B(\mathcal{Z(G)})$ is an algebraic isomorphism such that
$am=m\eta(a)$ and $na=\eta(a)n$ for all $a\in \pi_A(\mathcal{Z(G)}), m\in M, n\in N$.
Alternatively, it is equivalent to saying that $mb=\eta^{-1}(b)m$ and $bn=n\eta^{-1}(b)$ for all 
$b\in \pi_B(\mathcal{Z(G)}), m\in M, n\in N$.

An $(A,B)$-bimodule endomorphism $\mathcal{F}:M\longrightarrow M$ is said to be
\textit{standard form} if $\mathcal{F}(m) = \omega_0 m + m \omega_1$
holds true for all $m\in M$, some central elements $\omega_0 \in \mathcal{Z}(A)$ and $\omega_1 \in \mathcal{Z}(B)$.
Correspondingly, a $(B,A)$-bimodule endomorphism $\mathcal{E}:N\longrightarrow N$ admits
the \textit{standard form} if it is of the form $\mathcal{E}(n) = n \varpi_0 + \varpi_1 n$
for all $n\in N$, certain central elements $\varpi_0 \in \mathcal{Z}(A)$ and $\varpi_1 \in \mathcal{Z}(B)$.

A special pair $(\mathcal{F}, \mathcal{E})$ of bimodule homomorphisms $\mathcal{F}:M\longrightarrow M$ and $\mathcal{E}:N\longrightarrow N$
 is called \emph{special} if they satisfy the compatibility condition:
$$
\mathcal{F}(m)n+m\mathcal{E}(n)=0=n\mathcal{F}(m)+\mathcal{E}(n)m
$$
for all $m\in M$ and $n\in N$. Such an special pair $(\mathcal{F}, \mathcal{E})$
possesses the \emph{standard form} if there exist central elements $\omega_0 \in \mathcal{Z}(A)$
and $\omega_1 \in \mathcal{Z}(B)$ such that:
$$
\mathcal{F}(m)=\omega_0m+m\omega_1~ \text{and} ~\mathcal{E}(n)=-n\omega_0-\omega_1n
$$
for all $m\in M$ and $n\in N$.

\vspace{2mm}
\section{Key Techniques: $3$-Lie Derivations}
\label{xxsec3}

Although Du and Wang \cite{DuWang2} previously determined the structure of
Lie derivations on generalized matrix algebras, the structure of bi-Lie derivation on these algebras remain untouched and mysterious,
because their characterization need to overcome substantial difficulties,
and consequently will not be handled under the current work.

Building upon the methodology developed in \cite{Jabeen2022} for analyzing
$n$-Lie derivations on triangular algebras,
we employ mathematical induction on the parameter $n$,
with the induction established for $n=3$. Accordingly,
we will finish the structural description of $3$-Lie derivations on generalized matrix algebras in this section.

\begin{tcolorbox}[blanker,left=3mm,right=3mm,
borderline west={2pt}{0pt}{orange}]

\begin{proposition}\label{xxsec3.1}
Let $\mathcal{G}=\left[
\begin{array}
[c]{cc}%
A & M\\
N & B\\
\end{array}
\right]$ be a generalized matrix algebra over a commutative ring $\mathcal{R}$
and $\varphi:\mathcal{G}\times \mathcal{G}\times \mathcal{G}\longrightarrow\mathcal{G}$ be a $3$-Lie derivation on $\mathcal{G}$. Suppose that
\begin{enumerate}
\item [{\rm (1)}] $\pi_{A}(\mathcal{Z}(\mathcal{G}))=\mathcal{Z}(A)$ and $\pi_{B}(\mathcal{Z}(\mathcal{G}))=\mathcal{Z}(B);$
\item [{\rm (2)}] either $A$ or $B$ does not contain nonzero central ideals;
\item [{\rm (3)}] if $\alpha a = 0, \alpha\in \mathcal{Z}(\mathcal{G}), 0 \neq a \in \mathcal{G}$, then $\alpha = 0;$
\item [{\rm (4)}] If $MN = 0 = NM$, then at least one of the algebras $A$ and $B$ is noncommutative;
\item [{\rm (5)}] every special pair of bimodule homomorphisms has standard form.
\end{enumerate}
Then $\varphi$ is of the form $\varphi=\kappa+\psi$, where $\kappa$ is an extremal $3$-derivation such that
$\kappa(x, y, z) =[x, [y, [z, X_0]]]$ for all $x, y, z\in \mathcal{G}$, $\psi$ is a $3$-linear central-valued mapping 
vanishing on commutators in each component, $X_0= e\varphi(e, e, e)f + (-1)^3f\varphi(e, e, e)e$. 
\end{proposition}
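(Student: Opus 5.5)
The plan is to reduce to the known structure of Lie derivations on generalized matrix algebras, one argument at a time. Du and Wang \cite{DuWang2} show that under hypotheses (1)--(5) every Lie derivation $L$ on $\mathcal{G}$ has the form $L=d+\tau$, where $d$ is a derivation and $\tau$ is a central-valued linear map vanishing on commutators. Fixing two of the three arguments of $\varphi$ gives a Lie derivation in the remaining variable. I will not simply stack these decompositions; instead I will follow the Peirce-decomposition computation used by Benkovi\v{c} and Jabeen, adapted to $N\neq 0$.

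\textbf{Step 1: Peirce components of $\varphi$ on idempotents.} Since $\varphi$ is a Lie derivation in each variable and $[e,\mathcal{G}]=eGf-fGe$, we use the relations
\[
[e,m]=m,\qquad [e,n]=-n,\qquad [e,a]=0=[e,b].
\]
Then we expand $\varphi(x,y,[e,m])$, $\varphi(x,y,[a,a'])$ and similar expressions, with each of $x,y,z$ running over $\{e,f\}$ and over $A,M,N,B$. The goal is to show that the off-diagonal part of $\varphi(e,e,e)$ controls everything. Concretely, for $m\in M$ and $n\in N$ I expect
\begin{align*}
\varphi(e,e,m)&=[e,[e,[m,X_0]]]+\text{central},\\
\varphi(e,e,n)&=[e,[e,[n,X_0]]]+\text{central},
\end{align*}
with $X_0=e\varphi(e,e,e)f-f\varphi(e,e,e)e$. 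The sign $(-1)^3$ comes from the three commutators with $e$ acting on the $N$-corner. I will also verify that $X_0$ satisfies $[[\mathcal{G},\mathcal{G}],X_0]=0$. For this, I will show that $\varphi$ on mixed arguments forces $mX_0$, $X_0 n$ and related products to vanish, and that $[a,X_0]=0$ via the commutator relations. Hypothesis (4) handles the case $MN=0=NM$. If $X_0\in\mathcal{Z}(\mathcal{G})$, then $\kappa=0$ and the statement still holds, so "extremal" should be read loosely.

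\textbf{Step 2: Subtract the extremal part.} Set $\Delta=\varphi-\kappa$. Then $\Delta$ is again a $3$-Lie derivation, and by Step 1 its off-diagonal $e$-values vanish. I will show that $\Delta$ is central-valued.

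First fix $y,z$. The map $x\mapsto \Delta(x,y,z)$ is a Lie derivation, so it equals $d_{y,z}+\tau_{y,z}$. The component $d_{y,z}$, restricted to $M$, gives bimodule maps $\mathcal{F}_{y,z}$ and $\mathcal{E}_{y,z}$ on $N$. The Lie relations coming from $[m,n]$ show that $(\mathcal{F},\mathcal{E})$ is a special pair. By hypothesis (5) it is standard, so
\[
d_{y,z}(m)=\omega_0 m+m\omega_1 .
\]
Using hypothesis (1), we can write $\omega_0=\pi_A(\alpha)$ and $\omega_1=\pi_B(\beta)$ for central $\alpha,\beta\in\mathcal{Z}(\mathcal{G})$. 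It then remains to show that $d_{y,z}$ is inner, given by an element of the form $\lambda e$ with $\lambda$ central. Once that is done, its symmetric counterpart in the other variables must vanish.

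The decisive comparison uses the Lie relation in two different slots on the same commutator, for example
\[
\Delta([x,x'],y,z)\quad\text{and}\quad \Delta(x,[y,y'],z),
\]
which yields identities of the form $\lambda_{y,z}[x,x']\in\mathcal{Z}(\mathcal{G})$. Hypothesis (2) then forces $\lambda$ to vanish on the relevant corner, because otherwise $\lambda A$ or $\lambda B$ would be a nonzero central ideal. Hypothesis (3) cancels central scalars. Together these give $d_{y,z}=0$, so $\Delta$ is central-valued in each slot. Vanishing on commutators follows immediately: if $x=[u,v]$, then
\[
\Delta([u,v],y,z)=[\Delta(u,y,z),v]+[u,\Delta(v,y,z)]=0,
\]
since $\Delta$ is central-valued. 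This gives $\psi=\Delta$.

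\textbf{Main obstacle.} The hardest part is the bookkeeping in Step 1 together with the centrality argument in Step 2. The difficulty is that, unlike the triangular case, $N$ contributes terms $mn$ and $nm$ that do not vanish. I therefore need hypotheses (4) and (5) to control the diagonal parts $\varphi(a,\cdot,\cdot)$, and I expect to need a careful case split according to whether $MN=0$ or not.
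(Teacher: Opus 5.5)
Your outline has the right ingredients (Peirce calculus against $e$, special pairs of bimodule maps, central ideals, a case split on $MN$). However, at the points where the proof has to do real work, the mechanism you give is either missing or would not go through.

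\textbf{Step 2: the bimodule maps.} This is the central gap. For a derivation $d$ of $\mathcal{G}$ one has $e\,d(am)f=a\,(e\,d(m)f)+(e\,d(a)e)\,m$. So the $M$-corner of $d_{y,z}$ is a left $A$-module map only if $e\,d_{y,z}(a)e=0$ for all $a\in A$; an inner derivation $[a_0,\cdot\,]$ with $a_0\in A\setminus\mathcal{Z}(A)$ already fails this. Knowing $e\,d_{y,z}(a)e=0$ requires control of $\Delta(a,y,z)$, which is part of what you are trying to prove. The Du--Wang decomposition $L=d+\tau$ says nothing about the diagonal part of $d$, so it does not help here.

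The paper avoids this circularity by bootstrapping from $\psi(e,e,e)\in\mathcal{Z}(\mathcal{G})$. First it shows that $\psi(x,e,e)$ and $\psi(x,y,e)$ are central and that $\psi(x,y,z)$ is diagonal for $x,y,z\in A\cup B$ (Lemma \ref{xxsec3.10}). Only then is $\xi(m)=\psi(e,e,m)$ an $(A,B)$-bimodule map, and $\psi(a_1,a_2,m)$ is reduced to it via $\psi(a_1,a_2,m)=a_1a_2\psi(e,e,m)$ and its analogues. Even then, the relation coming from $[m,n]$ only gives $\xi(m)n+m\zeta(n)\in\mathcal{Z}(A)$. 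To get an honest special pair you must note that these elements span a central ideal of $A$ and kill it with (2).

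\textbf{Killing the scalar.} Your argument here does not work either. An identity $\lambda[x,x']\in\mathcal{Z}(\mathcal{G})$ does not make $\lambda A$ a central ideal, and you do not say where such an identity would come from. With $\xi(m)=\theta_0 m$, the paper needs a genuine case split:
\begin{itemize}
\item If $MN\neq0$ or $NM\neq0$: $e\psi(e,n,m)e=-\theta_0 mn$ is central, so $\theta_0MN$ is a central ideal. Then (2) gives $\theta_0MN=0$, and (3) gives $\theta_0=0$.
\item If $MN=0=NM$: hypothesis (4) supplies $[a_1,a_2]\neq0$ in (say) $A$. The identity (LB) of Lemma \ref{xxsec3.2}, applied to $\psi(\cdot,\cdot,m)$ at $x=y=e$, $u=a_1$, $v=a_2$, gives $[a_1,a_2]\theta_0M=0$. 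Hence $\theta_0=0$ by faithfulness and (3).
\end{itemize}
The cases with two or three off-diagonal arguments also need their own arguments, which your sketch does not cover. These are $\psi(e,m_1,m)$ and $\psi(m,m_1,m_2)$, each needing a separate special pair, and the mixed ones such as $\psi(x,n,m)$, which are handled using 2-torsion-freeness.

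\textbf{Step 1.} The same problem appears in miniature. As written, $[a,X_0]=0$ is false unless $X_0=0$: taking $a=1_A$ gives $m_0+n_0$. What you need is $[A,A]m_0=0=m_0[B,B]$ and $n_0[A,A]=0=[B,B]n_0$. These follow from $e\varphi(e,a_1,a_2)f=a_1a_2m_0=a_2a_1m_0$ and its analogues.

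The relations $Mn_0=n_0M=0$ and $m_0N=Nm_0=0$ do not come from hypothesis (4). Instead:
\begin{itemize}
\item one shows $e\varphi(e,e,m)e=-mn_0\in\mathcal{Z}(A)$;
\item these elements generate a central ideal of $A$, which (2) kills;
\item $n_0M=0$ then follows through $\eta$ and the faithfulness of $M$.
\end{itemize}
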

\end{tcolorbox}

To finish the proof of this proposition, we need to make some preparations in advance.

\begin{tcolorbox}[blanker,left=3mm,right=3mm,
borderline west={2pt}{0pt}{green}]

\begin{lemma}\label{xxsec3.2}{\rm \cite[Lemma 3.1]{LiangWei}}
Let $\mathcal{A}$ be an associative algebra over a commutative ring $\mathcal{R}$
 and $\varphi: \mathcal{A}\times \mathcal{A}\longrightarrow \mathcal{A}$ be a Lie biderivation on $\mathcal{A}$,
then $\varphi$ satisfies the following relation
$$
[\varphi(x,y),[v,u]]+[\varphi(x,v),[u,y]]=[\varphi(u,y),[x,v]]+[\varphi(u,v),[x,y]]     \eqno{({\rm LB})}
$$
for all $x, y, u, v\in \mathcal{A}$.
\end{lemma}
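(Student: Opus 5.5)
The identity (LB) should follow by expanding a single element in two different ways, using only that $\varphi$ is a Lie derivation in each argument separately. The element to expand is
$$
\Phi=\varphi([x,u],[y,v]).
$$
Expanding first in the first argument and then in the second gives
$$
\Phi=[\varphi(x,[y,v]),u]+[x,\varphi(u,[y,v])],
$$
and then
$$
\varphi(x,[y,v])=[\varphi(x,y),v]+[y,\varphi(x,v)],\qquad \varphi(u,[y,v])=[\varphi(u,y),v]+[y,\varphi(u,v)].
$$
Expanding instead in the second argument first and then in the first gives
$$
\Phi=[\varphi([x,u],y),v]+[y,\varphi([x,u],v)],
$$
with $\varphi([x,u],y)=[\varphi(x,y),u]+[x,\varphi(u,y)]$ and the analogous formula for $v$ in place of $y$. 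Each expansion therefore writes $\Phi$ as a sum of eight double commutators, each involving one of the four values $\varphi(x,y)$, $\varphi(x,v)$, $\varphi(u,y)$, $\varphi(u,v)$.

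Next, subtract the two expansions and group the terms by which $\varphi$-value they contain. For $\varphi(x,y)$ the surviving terms are
$$
[[\varphi(x,y),v],u]-[[\varphi(x,y),u],v].
$$
By the Jacobi identity this equals $[\varphi(x,y),[v,u]]$, up to a sign that has to be tracked. Terms such as $[x,[y,\varphi(u,v)]]$ versus $[y,[x,\varphi(u,v)]]$ likewise combine by Jacobi into $\pm[[x,y],\varphi(u,v)]$. The mixed terms collapse in the same way: those with $\varphi(x,v)$ pair $[[y,\varphi(x,v)],u]$ against $[y,[\varphi(x,v),u]]$, and those with $\varphi(u,y)$ pair $[x,[\varphi(u,y),v]]$ against $[[x,\varphi(u,y)],v]$.

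After these reductions the difference of the two expansions, which is $0$, becomes a signed sum of four terms:
$$
[\varphi(x,y),[v,u]],\quad [\varphi(x,v),[u,y]],\quad [\varphi(u,y),[x,v]],\quad [\varphi(u,v),[x,y]].
$$
Rearranging, possibly using $[a,[b,c]]=-[a,[c,b]]$, should give exactly (LB).

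No deep obstacle is expected. The main risk is the sign bookkeeping in the four Jacobi reductions, so I would verify each group carefully by writing $[[a,b],c]-[[a,c],b]=[a,[b,c]]$ explicitly. Note also that only bilinearity and the componentwise Lie derivation property are used; neither $2$-torsionfreeness nor any hypothesis on $\mathcal{A}$ is needed.
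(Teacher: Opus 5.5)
Expanding $\varphi([x,u],[y,v])$ in both orders and reducing with Jacobi is the right idea. It is the standard argument; the paper gives no proof and only cites Liang--Wei. Your four pairings are correct, though each expansion has four double commutators (one per value of $\varphi$), not eight.

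The gap is the last step. You deferred the signs and asserted that rearranging ``should give exactly (LB)'', but it does not. Use $[a,[b,c]]=[[a,b],c]-[[a,c],b]$ for the first three groups and $[x,[y,s]]-[y,[x,s]]=[[x,y],s]$ for the last. Then (first expansion) minus (second expansion) is
\[
0=[\varphi(x,y),[v,u]]+[\varphi(x,v),[u,y]]+[\varphi(u,y),[x,v]]-[\varphi(u,v),[x,y]],
\]
that is,
\[
[\varphi(x,y),[v,u]]+[\varphi(x,v),[u,y]]=[\varphi(u,y),[v,x]]+[\varphi(u,v),[x,y]].
\]
This is (LB) with the opposite sign on the $\varphi(u,y)$ term.

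No rearrangement can close this, because (LB) as printed is false. Take $\mathcal{A}=M_2(\mathcal{R})$ with the Lie biderivation $\varphi(x,y)=[x,y]$, and matrix units $x=E_{11}$, $y=E_{22}$, $u=E_{21}$, $v=E_{12}$. Then $\varphi(x,y)=0$, $\varphi(x,v)=E_{12}$, $\varphi(u,y)=-E_{21}$, $[u,y]=-E_{21}$, $[x,v]=E_{12}$ and $[x,y]=0$. The left side of (LB) is $[E_{12},-E_{21}]=E_{22}-E_{11}$, while the right side is $[-E_{21},E_{12}]=E_{11}-E_{22}$. These differ since $\mathcal{R}$ is $2$-torsionfree.

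So state and prove the corrected identity above. Your argument gives it with no hypothesis on $\mathcal{A}$ or on torsion. The sign slip does not affect the paper's uses of (LB) in Lemmas~\ref{xxsec3.11} and~\ref{xxsec3.15}. In every substitution made there, the $\varphi(u,y)$-term vanishes anyway: either $[e,a_2]=0$ for $a_2\in A$, or $\psi(a_1,m_1,m_2)=0$ by Lemma~\ref{xxsec3.11}. Only an equation of the form $[\psi(x,y,z),[a_1,a_2]]=0$ is extracted, and that holds with either sign.
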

\end{tcolorbox}

It should be remarked that special pair of bimodule homomorphisms play a crucial role for the proof of Proposition \ref{xxsec3.1}.
We now bring two fundamental lemmas that will be essential for our subsequent analysis and discussion.

\begin{tcolorbox}[blanker,left=3mm,right=3mm,
borderline west={2pt}{0pt}{green}]

\begin{lemma}\label{xxsec3.3}{\rm \cite[Proposition 3.3]{DuWang3}}
Let $\mathcal{G}=\left[
\begin{array}
[c]{cc}%
A & M\\
N & B\\
\end{array}
\right]$ be a generalized matrix algebra over a commutative ring $\mathcal{R}$ satisfying the following conditions:
\begin{enumerate}
\item[{\rm (1)}] for each $n\in N$ the condition $Mn=0=nM$ implies $n=0$;
\item[{\rm (2)}] every $(A, B)$-bimodule homomorphism of $M$ has the standard form.
\end{enumerate}
Then each special pair $(\mathcal{F}, \mathcal{E})$
of bimodule homomorphisms $\mathcal{F}:M\longrightarrow M$
and $\mathcal{E}:N\longrightarrow N$ is of the standard form.
\end{lemma}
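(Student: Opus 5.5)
Given a special pair $(\mathcal{F},\mathcal{E})$, I will use hypothesis (2) to put $\mathcal{F}$ in standard form, say $\mathcal{F}(m)=\omega_0m+m\omega_1$ with $\omega_0\in\mathcal{Z}(A)$ and $\omega_1\in\mathcal{Z}(B)$. Then I will show that $\mathcal{E}$ is forced to equal $n\mapsto -n\omega_0-\omega_1 n$. The natural route is to consider the difference $\mathcal{D}(n)=\mathcal{E}(n)+n\omega_0+\omega_1 n$ and prove that it vanishes by means of hypothesis (1).

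The first step is to substitute the standard form of $\mathcal{F}$ into the two compatibility relations. The first relation gives
$$
(\omega_0m+m\omega_1)n+m\mathcal{E}(n)=0 .
$$
Since $\omega_0$ is central in $A$ and $mn\in A$, we have $\omega_0 mn = mn\omega_0 = m(n\omega_0)$. Since $\omega_1$ is central in $B$, we have $m\omega_1 n = m(\omega_1 n)$. Hence the relation becomes $m\bigl(n\omega_0+\omega_1 n+\mathcal{E}(n)\bigr)=0$, that is, $m\mathcal{D}(n)=0$ for all $m\in M$. Note that $\omega_0 mn=mn\omega_0$ holds because both sides lie in $A$ and $\omega_0\in\mathcal{Z}(A)$, and $m n\omega_0$ is unambiguous by associativity of the Morita context.

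Symmetrically, the second relation $n\mathcal{F}(m)+\mathcal{E}(n)m=0$ gives
$$
n\omega_0 m+nm\omega_1+\mathcal{E}(n)m=0 .
$$
Using $nm\in B$ and $\omega_1\in\mathcal{Z}(B)$, we have $nm\omega_1=\omega_1 nm$. So this relation becomes $\mathcal{D}(n)m=0$ for all $m\in M$. Since $\mathcal{D}(n)\in N$ satisfies $M\mathcal{D}(n)=0=\mathcal{D}(n)M$, hypothesis (1) yields $\mathcal{D}(n)=0$. Therefore $\mathcal{E}(n)=-n\omega_0-\omega_1 n$, which is exactly the standard form of the pair.

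The main point needing care is the bookkeeping of the central elements across the bimodule actions. We must check that the multiplications $\omega_0\cdot(mn)$ and $(nm)\cdot\omega_1$ can be moved past the products by centrality in $A$ and $B$ respectively. This relies on the Morita context associativity $(mn)m'=m(nm')$ implicit in $\mathcal{G}$ being an associative algebra. No other obstacle is expected, since $\mathcal{E}$ being a bimodule map is not even needed beyond its role in the compatibility identities.
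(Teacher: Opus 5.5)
Your proof is correct and is essentially the standard argument behind this result (the paper gives no proof of its own and only cites Du--Wang): you substitute the standard form of $\mathcal{F}$ into the two compatibility relations to get $M\mathcal{D}(n)=0=\mathcal{D}(n)M$, and then hypothesis (1) finishes the proof. One small correction to your bookkeeping remark: the identities you actually use are $(\omega_0m)n=\omega_0(mn)$, $(mn)\omega_0=m(n\omega_0)$, $(m\omega_1)n=m(\omega_1n)$ and their analogues for products $nm$. These come from the bimodule-linearity and balancedness of $\Phi_{MN}$ and $\Psi_{NM}$, not from the compatibility identity $(mn)m'=m(nm')$.
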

\end{tcolorbox}

\begin{tcolorbox}[blanker,left=3mm,right=3mm,
borderline west={2pt}{0pt}{green}]

\begin{lemma}\label{xxsec3.4}{\rm \cite[Proposition 3.4.]{DuWang3}}
Let $\mathcal{G}=\left[
\begin{array}
[c]{cc}%
A & M\\
N & B\\
\end{array}
\right]$ be a generalized matrix algebra over a commutative ring $\mathcal{R}$. If every derivation on 
$\mathcal{G}$ is inner, then every special pair $(\mathcal{F}, \mathcal{E})$ of bimodule homomorphisms $\mathcal{F}: M\longrightarrow  M$
and $ \mathcal{E}: N\longrightarrow N$ has the standard form.
\end{lemma}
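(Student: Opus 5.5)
The plan is to encode the special pair as a single linear map on $\mathcal{G}$, check that this map is a derivation, and then read off the standard form from the element implementing it as an inner derivation. Let $(\mathcal{F},\mathcal{E})$ be a special pair of bimodule homomorphisms. Define an $\mathcal{R}$-linear map $D:\mathcal{G}\longrightarrow\mathcal{G}$ by
$$
D\left(\left[\begin{array}{cc} a & m\\ n & b\end{array}\right]\right)=\left[\begin{array}{cc} 0 & \mathcal{F}(m)\\ \mathcal{E}(n) & 0\end{array}\right].
$$

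\textbf{Step 1: $D$ is a derivation.} Take $x=\left[\begin{smallmatrix} a & m\\ n & b\end{smallmatrix}\right]$ and $y=\left[\begin{smallmatrix} a' & m'\\ n' & b'\end{smallmatrix}\right]$, and compare $D(xy)$ with $D(x)y+xD(y)$ entry by entry.

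For the $(1,2)$-entry, $D(xy)$ gives $\mathcal{F}(am'+mb')$. The bimodule property gives
$$
\mathcal{F}(am'+mb')=a\mathcal{F}(m')+\mathcal{F}(m)b',
$$
which is exactly the $(1,2)$-entry of $D(x)y+xD(y)$. The $(2,1)$-entry is handled the same way using $\mathcal{E}(na'+bn')=\mathcal{E}(n)a'+b\mathcal{E}(n')$.

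The diagonal entries of $D(xy)$ are zero. The diagonal entries of $D(x)y+xD(y)$ are $\mathcal{F}(m)n'+m\mathcal{E}(n')$ and $\mathcal{E}(n)m'+n\mathcal{F}(m')$. Both vanish precisely by the defining compatibility condition of a special pair, so $D$ is a derivation. This is the only place where the special-pair condition is used, and it is the step to double-check carefully.

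\textbf{Step 2: use innerness.} By hypothesis there is $T=\left[\begin{smallmatrix} t_0 & t_1\\ t_2 & t_3\end{smallmatrix}\right]\in\mathcal{G}$ with $D(x)=[T,x]$ for all $x\in\mathcal{G}$. We evaluate this identity on suitable elements.

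1. At $x=e$: we have $D(e)=0$ and $[T,e]=\left[\begin{smallmatrix} 0 & -t_1\\ t_2 & 0\end{smallmatrix}\right]$, so $t_1=0=t_2$.
2. At $x=a\in A$: $0=D(a)=[t_0,a]$, so $t_0\in\mathcal{Z}(A)$.
3. At $x=b\in B$: similarly $t_3\in\mathcal{Z}(B)$.
4. At $x=m\in M$: $\mathcal{F}(m)=[T,m]=t_0m-mt_3$.
5. At $x=n\in N$: $\mathcal{E}(n)=[T,n]=t_3n-nt_0$.

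\textbf{Step 3: conclude.} Put $\omega_0=t_0\in\mathcal{Z}(A)$ and $\omega_1=-t_3\in\mathcal{Z}(B)$. Then
$$
\mathcal{F}(m)=\omega_0m+m\omega_1 \quad\text{and}\quad \mathcal{E}(n)=-n\omega_0-\omega_1n
$$
for all $m\in M$ and $n\in N$. This is exactly the standard form of a special pair. No step is a genuine obstacle; the argument is purely computational.
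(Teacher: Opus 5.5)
Your proof is correct: $D$ is a derivation exactly because of the bimodule and special-pair identities, and evaluating $D=[T,\cdot\,]$ at $e$ and on $A$, $B$, $M$, $N$ gives the standard form with $\omega_0=t_0$ and $\omega_1=-t_3$. The paper does not prove this lemma; it cites it from Du and Wang, and your argument (build a derivation from the special pair, then read off the implementing element) is the standard direct proof of that cited result.
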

\end{tcolorbox}

By comparing extremal $n$-derivations ($n>2$) with
extremal $2$-derivations we find whether a multilinear mapping
$\kappa_n (x_1, x_2,\cdots,x_n)=[x_1, [x_2,\cdots, [x_n, X_0]\cdots]]$
constitutes an extremal $n$-derivation heavily depends on the existence of an element $X_0\in \mathcal{G}$ 
satisfying condition $[[\mathcal{G}, \mathcal{G}],X_0]=0$.
The following proposition provides the existence condition and structure form of extremal $n$-derivations ($n\geq 2$). 
The proof is omitted due to its similarity to that of \cite[Proposition 4.1.]{DuWang3}.

\begin{tcolorbox}[blanker,left=3mm,right=3mm,
borderline west={2pt}{0pt}{orange}]

\begin{proposition}\label{xxsec3.5}
Let $\mathcal{G}=\left[
\begin{array}
[c]{cc}%
A & M\\
N & B\\
\end{array}
\right]$ be a generalized matrix algebra.  If there
exists $m_0 \in M$ and $n_0 \in N$ with $m_0\neq 0$ or $n_0 \neq 0$ such that
\begin{enumerate}
\item [{\rm (i)}] $[A, A]m_0=0=m_0[B, B]$ and $n_0[A, A]=0=[B, B]n_0$,
\item [{\rm (ii)}] $m_0N = 0 = Nm_0$ and $n_0M = 0 = Mn_0$.
\end{enumerate}
Then the equation $[[\mathcal{G}, \mathcal{G}],  X_0]=0$ holds true, where $X_0=m_0+(-1)^n n_0$.
\end{proposition}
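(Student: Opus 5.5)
The plan is a direct componentwise computation. Since $[\mathcal{G},\mathcal{G}]$ is the $\mathcal{R}$-linear span of the commutators $[x,y]$, and $X_0\mapsto[\,\cdot\,,X_0]$ is bilinear, it suffices to show $[[x,y],m_0]=0$ and $[[x,y],n_0]=0$ for all $x,y\in\mathcal{G}$. The sign $(-1)^n$ in $X_0$ then plays no role: $[[x,y],X_0]=[[x,y],m_0]+(-1)^n[[x,y],n_0]$ vanishes termwise.

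First I will write $x=a+m+n+b$ and $y=a'+m'+n'+b'$ with $a,a'\in A$, $m,m'\in M$, $n,n'\in N$, $b,b'\in B$. Using the multiplication of the generalized matrix algebra, in which $MM=0=NN$, $MN\subseteq A$ and $NM\subseteq B$, I will record the four components of $c=[x,y]=\alpha+\mu+\nu+\beta$:
\[
\alpha=[a,a']+mn'-m'n,\qquad \beta=[b,b']+nm'-n'm,
\]
with $\mu\in M$ and $\nu\in N$ whose exact form will not be needed.

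Next I compute $[c,m_0]$. Since $\mu m_0=0=m_0\mu$, we get
\[
[c,m_0]=(\alpha m_0-m_0\beta)+\nu m_0-m_0\nu,
\]
where $\nu m_0\in B$ and $m_0\nu\in A$. Condition (ii) ($m_0N=0=Nm_0$) kills the last two terms. It also kills $mn'm_0=m(n'm_0)$ and $m'nm_0$, so that $\alpha m_0=[a,a']m_0$, which is $0$ by (i). Symmetrically, $m_0\beta=m_0[b,b']+(m_0n)m'-(m_0n')m=0$, using (i) and (ii) together with the Morita associativity $(m_0n)m'=m_0(nm')$. The computation for $n_0$ is the mirror image:
\[
[c,n_0]=\beta n_0-n_0\alpha+\mu n_0-n_0\mu .
\]
Here $\mu n_0=0=n_0\mu$ by $Mn_0=0=n_0M$. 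Moreover $\beta n_0=[b,b']n_0=0$ and $n_0\alpha=n_0[a,a']=0$, since the mixed terms such as $nm'n_0=n(m'n_0)$ vanish.

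The only point needing care, and the nearest thing to an obstacle, is the bookkeeping of the mixed products $mn'm_0$, $m_0nm'$ and their analogues. These require the associativity of the Morita context (the commutative diagrams in Section \ref{xxsec2}) to regroup $(mn')m_0=m(n'm_0)$ before applying (ii). Once this is noted, everything reduces to hypotheses (i) and (ii), and the argument follows \cite[Proposition 4.1]{DuWang3} verbatim.
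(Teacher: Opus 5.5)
Your proof is correct and takes essentially the same route as the paper. The paper omits the argument and defers to the direct componentwise verification in Du and Wang's Proposition 4.1. That verification rests on the same two points you identify: regrouping the mixed triple products through the Morita associativity before applying (ii), and the fact that $m_0$ and $n_0$ each commute separately with every commutator, so the sign $(-1)^n$ plays no role.
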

\end{tcolorbox}

\begin{tcolorbox}[breakable, blanker,left=3mm,right=3mm,
borderline west={2pt}{0pt}{gray}]

\begin{remark}\label{xxsec3.6}
Let $\mathcal{G}=\left[
\begin{array}
[c]{cc}%
A & M\\
N & B\\
\end{array}
\right]$ be a generalized matrix algebra. Suppose that there exists an element $X_0\in \mathcal{G}$ such that
$[[\mathcal{G}, \mathcal{G}], X_0]=0$. Let us define a mapping
$$
\begin{aligned}
\nu: \underbrace{\mathcal{G}\times\cdots\times\mathcal{G}}_n & \longrightarrow \mathcal{G}\\
(x_1, x_2, \cdots, x_n) & \longmapsto [x_1, [x_2,\cdots, [x_n, X_0]\cdots ]], \, \, \, \, \, \forall x_1, x_2, \cdots, x_n\in \mathcal{G} . 
\end{aligned}
$$
Then $\nu$ is a permuting $n$-derivation of $\mathcal{G}$.
\end{remark}
\end{tcolorbox}

The proof of Remark \ref{xxsec3.6} is essentially identical with that of \cite[Remark 1]{WangWangDu}, 
and is therefore omitted here. The object defined therein $\nu$ continues to be referred to as an extremal $n$-derivation.
By invoking of \cite[Remark 1]{WangWangDu} again, we can make a reasonable 
deformation for the related mapping. Let us transform the $3$-Lie derivation in Proposition \ref{xxsec3.1} into
another much more simpler $3$-Lie derivation, see Proposition \ref{xxsec3.7} in below. For this purpose,
we will prove that the $3$-linear mapping
$\kappa:{\mathcal G}\times {\mathcal G}\times {\mathcal G}\longrightarrow {\mathcal G}$
defined in Proposition \ref{xxsec3.7}
is an extremal 3-derivation of a generalized matrix algebra $\mathcal{G}=\left[
\begin{array}
[c]{cc}%
A & M\\
N & B\\
\end{array}
\right].
$

\begin{tcolorbox}[breakable, blanker,left=3mm,right=3mm,
borderline west={2pt}{0pt}{orange}]

\begin{proposition}\label{xxsec3.7}
Let $\mathcal{G}=\left[
\begin{array}
[c]{cc}%
A & M\\
N & B\\
\end{array}
\right]$ be a generalized matrix algebra and $\varphi :\mathcal{G} \times \mathcal{G}\times \mathcal{G} \longrightarrow \mathcal{G}$
be a $3$-Lie derivation on $\mathcal{G}$. Suppose that 
\begin{enumerate}
\item [{\rm (1)}] $\pi_{A}(\mathcal{Z}(\mathcal{G}))=\mathcal{Z}(A)$ and $\pi_{B}(\mathcal{Z}(\mathcal{G}))=\mathcal{Z}(B);$
\item [{\rm (2)}] either $A$ or $B$ does not contain nonzero central ideals.
\end{enumerate}
If $\varphi (e, e, e) \ne 0$,
then $\varphi=\kappa+\psi$, where $\kappa$ is an extremal $3$-derivation such that $\kappa(x,y,z)=[x, [y, [z, X_0]]]$
for all $x, y, z\in \mathcal{G}$, $\psi$ is a $3$-Lie derivation such that 
$\psi(e, e, e)\in \mathcal{Z}(\mathcal{G})$, $X_0= e\varphi(e, e, e)f+ (-1)^3f\varphi(e, e, e)e$. 
\end{proposition}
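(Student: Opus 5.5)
Write $\varphi(e,e,e)=a_0+m_0+n_0+b_0$ with $a_0\in A$, $m_0\in M$, $n_0\in N$, $b_0\in B$. Then $X_0=m_0-n_0$. The plan is to show that $X_0$ satisfies $[[\mathcal{G},\mathcal{G}],X_0]=0$. Remark \ref{xxsec3.6} then gives that $\kappa(x,y,z)=[x,[y,[z,X_0]]]$ is an extremal (permuting) $3$-derivation, and in particular a $3$-Lie derivation. Consequently $\psi:=\varphi-\kappa$ is a $3$-Lie derivation, and it remains to check that $\psi(e,e,e)\in\mathcal{Z}(\mathcal{G})$. (If $X_0$ turns out to be central, $\kappa=0$; we still call it extremal in the degenerate sense, as the paper does.)

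\textbf{Step 1: Peirce information on $\varphi(e,e,e)$.} Each component of $\varphi$ is a Lie derivation, and $[e,e]=0$, so
\[
0=\varphi([e,e],e,e)=[\varphi(e,e,e),e]+[e,\varphi(e,e,e)],
\]
which carries no information. Instead we use the triples $(e,e,x)$ and $(x,e,e)$ with $x\in\{a,m,n,b\}$ and compare Peirce components. For $m\in M$ we have $[e,m]=m$, hence
\[
\varphi(e,e,m)=\varphi(e,e,[e,m])=[\varphi(e,e,e),m]+[e,\varphi(e,e,m)].
\]
Since $\varphi(e,e,\cdot)$ is a Lie derivation in the last slot, Du--Wang's description of Lie derivations on $\mathcal{G}$ applies under hypotheses (1) and (2). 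Alternatively, we compute directly with the identity (LB) from Lemma \ref{xxsec3.2}, applied to the bi-Lie derivations $(x,y)\mapsto\varphi(x,y,z)$. The goal is the following relations:
\[
[A,A]m_0=0=m_0[B,B],\qquad n_0[A,A]=0=[B,B]n_0,\qquad m_0N=0=Nm_0,\qquad n_0M=0=Mn_0.
\]
With these in hand, Proposition \ref{xxsec3.5} gives $[[\mathcal{G},\mathcal{G}],X_0]=0$. Concretely, we expand $\varphi([a,a'],e,e)$, $\varphi(e,[a,a'],e)$ and similar expressions, and use $[[a,a'],e]=0$ and $[m,n]=mn-nm$. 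The $M$-component of $\varphi(e,e,e)$ should then be annihilated by commutators, and the products $m_0n$, $nm_0$ forced to vanish. For instance, applying the Lie derivation property to $[e,[m,n]]=0$ and reading off the diagonal parts should produce $m_0n$ and $nm_0$ terms that are equal to commutator expressions. Hypothesis (2) then forces these terms to be zero, since they generate central ideals of $A$ or $B$. Hypothesis (1) is used to move central elements between $A$ and $B$ through $\eta$.

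\textbf{Step 2: centrality of $\psi(e,e,e)$.} Compute
\[
\kappa(e,e,e)=[e,[e,[e,m_0-n_0]]]=m_0-(-1)^3(-n_0)\cdot(\pm)\ldots,
\]
that is, $[e,m_0]=m_0$ and $[e,n_0]=-n_0$, so $\kappa(e,e,e)=m_0+(-1)^3(-n_0)=m_0+n_0$. Hence
\[
\psi(e,e,e)=a_0+b_0.
\]
It remains to show that $a_0+b_0\in\mathcal{Z}(\mathcal{G})$, i.e. that $a_0m=mb_0$ and $na_0=b_0n$, together with $a_0\in\mathcal{Z}(A)$ and $b_0\in\mathcal{Z}(B)$. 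This should follow from applying $\psi$ (a $3$-Lie derivation with vanishing off-diagonal value at $(e,e,e)$) to $[e,m]=m$ and $[e,a]=0$ in each slot. The $M$-components of these identities give $a_0m-mb_0$ expressed through $\psi(e,e,m)$-type terms. Combining the three slot identities, these terms cancel, with the $2$-torsionfree assumption used where a factor $2$ appears. The centrality of $a_0$ in $A$ comes from $\psi(e,e,[a,a'])$ and faithfulness of $M$.

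\textbf{Main obstacle.} I expect the hardest part to be Step 1, specifically extracting $m_0N=0=Nm_0$ and $n_0M=0$. These require genuinely mixing slots, and the central-ideal hypothesis (2) must be applied carefully. The first attempt will be to show that $\{m_0n : n\in N\}$ together with its $A$-multiples forms an ideal of $A$ contained in $\mathcal{Z}(A)$, and similarly on the $B$ side, using $\pi_A(\mathcal{Z}(\mathcal{G}))=\mathcal{Z}(A)$ to transfer the conclusion when $B$ is the algebra without central ideals.
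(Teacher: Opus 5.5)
Your outline is the paper's: establish for $m_0,n_0$ the relations of Proposition \ref{xxsec3.5}, use Remark \ref{xxsec3.6} to see that $\kappa$ is a $3$-derivation, and read off $\psi(e,e,e)=a_0+b_0$ from $\kappa(e,e,e)=m_0+n_0$. The gap is Step 1, which is the whole content of the proposition. You only state it as a goal, and the computations you propose cannot produce it.

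Each of $\varphi([a,a'],e,e)$, $\varphi(e,[a,a'],e)$ and $\varphi(e,e,[e,[m,n]])$ uses the Lie-derivation property of one slot, with the other two slots frozen at $e$. Consider every identity obtainable this way, even after combining the three slot maps $\varphi(\cdot,e,e)$, $\varphi(e,\cdot,e)$, $\varphi(e,e,\cdot)$ through their common value $\varphi(e,e,e)$. All of them stay true if the three maps are replaced by the inner derivation $x\mapsto[x,m]$ with $m\in M$ arbitrary, whose value at $e$ has $e\mathcal{G}f$-component $m$. So no such identity can force $[A,A]m_0=0$ or $m_0N=0$. For example, the $e\mathcal{G}f$-component of the expansion of $\varphi([a,a'],e,e)$ just returns $[a,a']m_0=[a,a']m_0$. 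Du--Wang's description of Lie derivations is also a slot-by-slot statement and does not help.

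\emph{The missing idea.} Evaluate $\varphi$ at arguments with two entries different from $e$, and compute one Peirce component through two different slots.

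\textbf{Relations (i).} One-slot expansions give $e\varphi(e,a,e)f=e\varphi(e,e,a)f=am_0$. Expanding $0=\varphi(e,[e,a_1],a_2)$ in the second slot gives $e\varphi(e,a_1,a_2)f=a_1a_2m_0$. Expanding $0=\varphi(e,a_1,[a_2,e])$ in the third slot gives $e\varphi(e,a_1,a_2)f=a_2a_1m_0$. Hence $[A,A]m_0=0$, and the other three relations in (i) are analogous.

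\textbf{Relations (ii).} For any Lie derivation $d$, the off-diagonal components of $d(m)=d([e,m])$ and $d(n)=d([n,e])$ give $ed(e)e+fd(e)f\in\mathcal{Z}(\mathcal{G})$. Apply this to $d=\varphi(e,\cdot,m)$: then $e\varphi(e,e,m)e\in\mathcal{Z}(A)$. The third slot alone gives $e\varphi(e,e,m)e=-mn_0$, so $mn_0\in\mathcal{Z}(A)$. Expanding $am=[a,m]$, and using the centrality just proved, gives $e\varphi(e,e,am)e=-mn_0a$. Together these make $Mn_0$ an ideal of $A$ contained in $\mathcal{Z}(A)$, hence zero by (2) (if $A$ is the algebra in (2); otherwise argue in $B$). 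Then $n_0M=0$ follows via $\eta$ and faithfulness of $M$. The products $m_0N$ and $Nm_0$ are handled the same way with $d=\varphi(e,\cdot,n)$.

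\textbf{Step 2.} This part is correct, and simpler than you expect; the paper merely asserts it. The same observation applied to $d=\varphi(e,e,\cdot)$ gives $a_0m=mb_0$ and $na_0=b_0n$ directly. No cancellation across slots and no $2$-torsion argument is needed, and faithfulness of $M$ then gives $a_0\in\mathcal{Z}(A)$, $b_0\in\mathcal{Z}(B)$.
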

\end{tcolorbox}

\begin{proof}

Let us complete the proof of this proposition via a series of claims.

\textbf{Claim 1.} With notations as above, we have $\varphi(x,y,0)=\varphi(x,0,z)=\varphi(0,y,z)=0$
for all $x ,y, z\in \mathcal{G}$.

Here, we exclusively provide the proof of $\varphi(x,y,0)=0$ for all $x, y\in \mathcal{G}$. 
For any $x ,y\in \mathcal{G}$, we have
$\varphi(x,y,0)=\varphi(x,y,[0,0])=[\varphi(x,y,0),0]+[0,\varphi(x,y,0)]=0$.
The other two relations can be obtained in an analogous way.

\textbf{Claim 2.} With notations as above, we have
\begin{enumerate}
  \item [(i)] $e\varphi(e,a_{1},a_{2})f=a_{2}\varphi(e,a_{1},e)f=a_{1}\varphi(e,e,a_{2})f=a_{1}a_{2}\varphi(e,e,e)f=a_{2}a_{1}\varphi(e,e,e)f$,
  \item [(ii)] $f\varphi(e,a_{1},a_{2})e=f\varphi(e,a_{1},e)a_{2}=f\varphi(e,e,a_{2})a_{1}=f\varphi(e,e,e)a_{1}a_{2}=f\varphi(e,e,e)a_{2}a_{1}$, 
  \item [(iii)] $e\varphi(e,b_{1},b_{2})f=-e\varphi(e,b_{1},e)b_{2}=-e\varphi(e,e,b_{2})b_{1}=e\varphi(e,e,e)b_{1}b_{2}=e\varphi(e,e,e)b_{2}b_{1}$,
  \item [(iv)]$f\varphi(e,b_{1},b_{2})e=-b_{2}\varphi(e,b_{1},e)e=-b_{1}\varphi(e,e,b_{2})e=b_{2}b_{1}\varphi(e,e,e)e=b_{1}b_{2}\varphi(e,e,e)e$
\end{enumerate}
for all $a_{1}, a_{2}\in A$ and $b_{1}, b_{2} \in B$.

We only prove the conclusions $(\text{i})-(\text{ii})$. The remaining conclusions
$(\text{iii})-(\text{iv})$ follow by using analogous arguments.

With the help of above {\bf Claim 1}, for all $a\in A$, we get
$$
\begin{aligned}
0&=\varphi(e,e,0)=\varphi(e,e,[e,a])=[\varphi(e,e,e),a]+[e,\varphi(e,e,a)]\\
&=\varphi(e,e,e)a-a\varphi(e,e,e)+e\varphi(e,e,a)-\varphi(e,e,a)e.
\end{aligned}\eqno{(3.0)}
$$
Multiplying both sides of (3.0) by $e$ and $f$ yields $e\varphi(e,e,a)f=a\varphi(e,e,e)f$.
Premultiplying by $f$ and postmultiplying by $e$ in (3.0) gives $f\varphi(e,e,a)e=f\varphi(e,e,e)a$.

Integrating the above computational techniques, all results can be summarized as follows:
$$
\begin{aligned}
e\varphi(e,e,a)f&=e\varphi(e,a,e)f=e\varphi(a,e,e)f=a\varphi(e,e,e)f, \\
f\varphi(e,e,a)e&=f\varphi(e,a,e)e=f\varphi(a,e,e)e=f\varphi(e,e,e)a,\\
e\varphi(e,e,b)f&=e\varphi(e,b,e)f=e\varphi(b,e,e)f=-e\varphi(e,e,e)b,\\
f\varphi(e,e,b)e&=f\varphi(e,b,e)e=f\varphi(b,e,e)e=-b\varphi(e,e,e)e.
\end{aligned}\eqno{(3.1)}
$$
for all $a\in A, b\in B$.

Applying the definition of $3$-Lie derivation yields 
$$
\begin{aligned}
0 & =\varphi(e,a_{1},0)\\
& =\varphi(e,a_{1},[a_{2},e])\\
&= [\varphi(e,a_{1},a_{2}),e]+[a_{2},\varphi(e,a_{1},e)]\\
&=\varphi(e,a_{1},a_{2})e-e\varphi(e,a_{1},a_{2})+a_{2}\varphi(e,a_{1},e)-\varphi(e,a_{1},e)a_{2}
\end{aligned}\eqno{(3.2)}
$$
for all $a_{1}, a_{2}\in A$. Multiplying both sides of (3.2) by $e$ and $f$ together with (3.1) gives
$$
e\varphi(e,a_{1},a_{2})f=a_{2}\varphi(e,a_{1},e)f=a_{2}a_{1}\varphi(e,e,e)f
$$
for all $a_{1}, a_{2}\in A$. Premultiplying (3.2) by $f$ and postmultiplying by $e$,  using (3.1), leads to
$$
f\varphi(e,a_{1},a_{2})e=f\varphi(e,a_{1},e)a_{2}=f\varphi(e,e,e)a_{1}a_{2}
$$
for all $a_{1}, a_{2}\in A$. For the second component, employing the aforementioned method give rise to
$$
e\varphi(e,a_{1},a_{2})f=a_{1}\varphi(e,e,a_{2})f=a_{1}a_{2}\varphi(e,e,e)f
$$
and
$$
f\varphi(e,a_{1},a_{2})e=f\varphi(e,e,a_{2})a_{1}=f\varphi(e,e,e)a_{2}a_{1}
$$
for all $a_{1}, a_{2} \in A$.
The preceding analysis establishes the validity of $(\text{i})$ and $(\text{ii})$.

\textbf{Claim 3.} With notations as above, we obtain
\begin{enumerate}
  \item [(i)] $e\varphi(e,e,m)e=e\varphi(e,m,e)e=e\varphi(m,e,e)e=-m\varphi(e,e,e)e$, 
  \item [(ii)] $f\varphi(e,e,m)f=f\varphi(e,m,e)f=f\varphi(m,e,e)f=f\varphi(e,e,e)m$, 
  \item [(iii)] $e\varphi(e,e,n)e=e\varphi(e,n,e)e=e\varphi(n,e,e)e=-e\varphi(e,e,e)n$, 
  \item [(iv)] $f\varphi(e,e,n)f=f\varphi(e,n,e)f=f\varphi(n,e,e)f=n\varphi(e,e,e)f$, 
  \item [(v)] the mappings $e\varphi(e,e,m)f, e\varphi(e,m,e)f$ and $ e\varphi(m,e,e)f$ are all $(A,B)$-bimodule homomorphisms,
  while the mappings $f\varphi(e,e,n)e, f\varphi(e,n,e)e$ and $ f\varphi(n,e,e)e$ are all $(B,A)$-bimodule homomorphisms
\end{enumerate}
for all $m\in M, n\in N$.

The proof of $(\text{i})$ is merely provided here, and the remaining cases can be achieved in an analogous manner. 
For all $m\in M$, we have
$$
\begin{aligned}
\varphi(e,e,m)& =\varphi(e,e,[e,m])\\
&= [\varphi(e,e,e),m]+[e,\varphi(e,e,m)]\\
&= \varphi(e,e,e)m-m\varphi(e,e,e)+e\varphi(e,e,m)-\varphi(e,e,m)e.
\end{aligned}\eqno{(3.3)}
$$
Multiplying both sides of (3.3) by $e$ yields $e\varphi(e,e,m)e=-m\varphi(e,e,e)e$.
Multiplying both sides of  (3.3) by $f$ produces $f\varphi(e,e,m)f=f\varphi(e,e,e)m$ for all $m\in M$.

Employing the same technique, one can reach the relations $e\varphi(m,e,e)e=e\varphi(e,m,e)e=-m\varphi(e,e,e)e$ and 
$f\varphi(e,m,e)f=f\varphi(m,e,e)f=f\varphi(e,e,e)m$ for all $m\in M$.

For computational convenience, let us set
$$
m_0=e\varphi(e,e,e)f, n_0=f\varphi(e,e,e)e.
$$

\textbf{Claim 4.} With notations as above, we get
\begin{enumerate}
  \item [(i)] $[A,A]m_{0}=m_{0}[B,B]=0~~\text{and} ~~[B,B]n_{0}=n_{0}[A,A]=0$;
  \item [(ii)] $m_{0}N=Nm_{0}=0~~\text{and}~~n_{0}M=Mn_{0}=0.$
\end{enumerate}

For any $a_{1},a_{2}\in A$, $b_{1},b_{2}\in B$,
it follows from the relations $(\text{i})$ and $(\text{iii})$ in {\bf Claim 2} that
$$
[a_{1},a_{2}]m_0=[a_{1},a_{2}]e\varphi(e,e,e)f=a_{1}a_{2}\varphi(e,e,e)f-a_{2}a_{1}\varphi(e,e,e)f=0  \eqno{(3.4)}
$$
and
$$
m_{0}[b_{1},b_{2}]=e\varphi(e,e,e)f[b_{1},b_{2}]=e\varphi(e,e,e)b_{1}b_{2}-e\varphi(e,e,e)b_{2}b_{1}=0.  \eqno{(3.5)}
$$

From an alternative perspective,
for all $a_{1},a_{2}\in A$, $b_{1},b_{2}\in B$,
it follows from $(\text{ii})$ and $(\text{iv})$ in {\bf Claim 2} that
$$
[b_{1},b_{2}]n_0=[b_1, b_2](f\varphi(e,e,e)e)=(b_{1}b_{2}\varphi(e,e,e)e-b_{2}b_{1}\varphi(e,e,e)e)=0  \eqno{(3.6)}
$$
and
$$
n_0[a_1, a_2]=f\varphi(e,e,e)e[a_1, a_2]=(f\varphi(e,e,e)a_1a_2-f\varphi(e,e,e)a_2a_1)=0.  \eqno{(3.7)}
$$
Thus we arrive at the result $(\text{i})$ with the help of equations $(3.4)-(3.7)$.

Let us next consider the second result $(\text{ii})$. By the definition of $3$-Lie derivation, we get
$$
\begin{aligned}
\varphi(e,m_{1},m_{2}) &=\varphi(e,[e, m_{1}],m_{2})\\
&= [\varphi(e, e, m_{2}),m_{1}]+[e, \varphi(e,m_{1},m_{2})]\\
&= \varphi(e,e,m_{2})m_{1}-m_{1}\varphi(e,e,m_{2})+e\varphi(e,m_{1},m_{2})-\varphi(e,m_{1},m_{2})e.
\end{aligned}
$$
for all $m_1, m_2\in M$. Multiplying the above equation both sides by $e$ and $f$ leads to
$$
e\varphi(e,e,m_{2})em_{1}=m_{1}f\varphi(e,e,m_{2})f \eqno{(3.8)}
$$
for all $m_1, m_2\in M$. 

By invoking the definition of $3$-Lie derivation again, we see that 
$$
\begin{aligned}
\varphi(e,n_{1},m_{2})& =\varphi(e, [n_{1},e], m_{2})\\
&=[\varphi(e,n_{1},m_{2}),e]+[n_{1},\varphi(e,e,m_{2})]\\
&=\varphi(e,n_{1},m_{2})e-e\varphi(e,n_{1},m_{2})+n_{1}\varphi(e,e,m_{2})-\varphi(e,e,m_{2})n_{1}.
\end{aligned}
$$
for all $n_1\in N, m_2\in M$. Multiplying both sides of the above equation by $f$ and $e$ produces 
$$
n_{1}e\varphi(e,e,m_{2})e=f\varphi(e,e,m_{2})fn_{1} \eqno{(3.9)}
$$
for all $n_1\in N, m_2\in M$.   
  
Combining (3.8) with (3.9) yields
$$
e\varphi(e,e,m_{2})e+f\varphi(e,e,m_{2})f\in \mathcal{Z(G)}. 
$$
for all $m_2\in M$. This implies that 
$$
e\varphi(e,e,m_{2})e\in \mathcal{Z}(A), \, \, \, \, \, f\varphi(e,e,m_{2})f\in \mathcal{Z}(B).\eqno{(3.10)}
$$

Let us now prove that $Mn_0=\{mn_0  \mid ~\text{for all}~ m\in M\}=\{0\}=n_0M=\{n_0m  \mid ~\text{for all}~m\in M\}$. 
In view of (3.10) and the result (i) in {\bf Claim 3} we assert that
$$
amn_0=-am\varphi(e,e,e)e=e\varphi(e,e,am)e\in \mathcal{Z}(A)      \eqno{(3.11)}
$$
for all $a\in A, m\in M$. 

In light of $(3.10)$, we know that 
$$
\begin{aligned}
\mathcal{Z}(A) & \ni e\varphi(e,e,am)e =e\varphi(e,e,[a,m])e\\
&=e([\varphi(e,e,a),m]+[a,\varphi(e,e,m)])e\\
&=e([\varphi(e,e,a),m])e=-m\varphi(e,e,e)a=-mn_0a.\\
\end{aligned} \eqno{(3.12)}
$$
for all $a\in A, b\in B$ and $m\in M$. With the help of (3.11) and (3.12), we conclude that the set
$Mn_0=\{mn_0 \mid ~\text{for all}~m\in M\}$ is a central ideal of $A$. Without loss of generality, 
we might suppose that algebra $A$ does not contain nonzero central ideals. This shows that $Mn_0=\{0\}$.
And hence, (3.11) becomes $0=amn_0=-e\varphi(e, e, am)e$ for all $a\in A, m\in M$.     
Note that the relation (3.8) results in $m_{1}f\varphi(e,e,m_{2})f=e\varphi(e,e,m_{2})em_{1}=m_{1}\eta(e\varphi(e,e,m_{2})e)$ for all $m_1, m_2\in M$. 
In light of the faithfulness of $M$ as $(A, B)$-bimodule, we see that $f\varphi(e,e,m)f=\eta(e\varphi(e,e,m)e)$ for all $m\in M$.
Combining (3.10) with the  relation (ii) in {\bf Claim 3} gives 
we have
$$
n_0m=f\varphi(e,e,e)em=f\varphi(e,e,m)f=\eta(e\varphi(e,e,m)e)=0 \eqno{(3.13)}
$$
for all $m\in M$. (3.13) implies that $n_0M=\{0\}$.

Let us now demonstrate the proof of the relation $m_{0}N=0=Nm_{0}$.
For arbitrary elements $n_{1}, n_{2}\in N$, we see that
$$
\begin{aligned}
\varphi(e,n_{1},n_{2}) & =\varphi(e,[n_{1},e],n_{2})\\
&=[\varphi(e,n_{1},n_{2}),e]+[n_{1},\varphi(e,e,n_{2})]\\
&=\varphi(e,n_{1},n_{2})e-e\varphi(e,n_{1},n_{2})+n_{1}\varphi(e,e,n_{2})-\varphi(e,e,n_{2})n_{1}.\\
\end{aligned}
$$
Multiplying both sides of the above equation by $f$ and $e$ produces 
$$
n_{1}e\varphi(e,e,n_{2})e=f\varphi(e,e,n_{2})fn_{1} \eqno{(3.14)}
$$
for all $n_{1}, n_{2}\in N$. Furthermore, for any $m_{1}\in M, n_{2}\in N$, we have
$$
\begin{aligned}
\varphi(e,m_{1},n_{2}) & =\varphi(e,[m_{1},e],n_{2})\\
&=[\varphi(e,m_{1},n_{2}),e]+[m_{1},\varphi(e,e,n_{2})]\\
&=\varphi(e,m_{1},n_{2})e-e\varphi(e,m_{1},n_{2})+m_{1}\varphi(e,e,n_{2})-\varphi(e,e,n_{2})m_{1}.\\
\end{aligned}
$$
Multiplying both sides of the above equation by $e$ and $f$
yields 
$$
m_{1}f\varphi(e,e,n_{2})f=e\varphi(e,e,n_{2})em_{1}\eqno{(3.15)}
$$ 
for all $m_{1}\in M, n_{2}\in N$. Taking into account the relation (3.14) together with (3.15) we get
$$
e\varphi(e,e,n_{2})e+f\varphi(e,e,n_{2})f\in \mathcal{Z(G)}
$$
for all $n_{1}, n_{2}\in N$. Thus 
$$
e\varphi(e,e,n_{2})e\in \mathcal{Z}(A), \, \, \, \, \, f\varphi(e,e,n_{2})f\in \mathcal{Z}(B). \eqno{(3.16)}
$$
According to (3.16) and the relation (iii) in  {\bf Claim 3}, we have
$$
m_0na=e\varphi(e,e,e)na=-e\varphi(e,e,na)e\in \mathcal{Z}(A)\eqno{(3.17)}
$$
for all $a\in A, n\in N$, 

On the other hand,  by (3.16) and the relation (i) in {\bf Claim 2} it follows that 
$$
\begin{aligned}
\mathcal{Z}(A) & \ni e\varphi(e,e,na)e=e\varphi(e,e,[n,a])e\\
&=e([\varphi(e,e,n),a]+[n,\varphi(e,e,a)])e\\
&=e \varphi(e,e,n)ae-ea \varphi(e,e,n)e+e[n, \varphi(e,e,a)]e\\
&= e \varphi(e,e,n)ea -a e\varphi(e,e,n)e +e[n , \varphi(e,e,a)]e\\
&=[ e\varphi(e,e,n) e , a]+e[n, \varphi(e,e,a)]e\\
&=e[n,\varphi(e,e,a)]e=-a\varphi(e,e,e)n=-am_0n\\
\end{aligned}
$$
for all $a\in A, n\in N$. Thus $m_0N=\{m_0n \mid ~\text{for all}~ n\in N\}$ is a central ideal of $A$. 
Without loss of generality, we might 
assume that algebra $A$ does not contain nonzero central ideals. This shows that $m_0N=\{0\}$.
Thus (3.17) becomes $0=m_0na=e\varphi(e,e,e)na=-e\varphi(e,e,na)e$ for all $a\in A, n\in N$. 
Note that the relation (3.15) brings $e\varphi(e,e,n_2)em_{1}=m_{1}f\varphi(e,e,n_2)f=m_1\eta(e\varphi(e,e,n_2)e)$.
Considering the fact that the $M$ as $(A, B)$-bimodule is faithful, we immediately 
get $f\varphi(e,e,n_2)f=\eta(e\varphi(e,e,n_2)e)$ for all $n_2\in N$.
In view of (3.16) and the relation (iv) in {\bf Claim 3}, we arrive at 
$$
nm_0=n\varphi (e,e,e)f=f\varphi (e,e,n)f=\eta(e\varphi(e,e,n)e)=0
$$
for all $n\in N$. This shows that $Nm_0=\{0\}$. We eventually finish the proof of {\bf Claim 4}.

Let us set $X_0=e\varphi(e,e,e)f+(-1)^{3}f\varphi(e,e,e)e$.
Using {\bf Claim 4} and Proposition \ref{xxsec3.5}, a straightforward verification
shows that the element $X_0$ indeed satisfy the relation $[[\mathcal{G}, \mathcal{G}], X_0]=0$. 
Thus the induced $3$-linear mapping $\kappa (x, y, z)=[x, [y, [z, X_0]]]$
is an extremal $3$-Lie derivation on $\mathcal{G}$ for all $x, y, z\in \mathcal{G}$.

Let us establish a $3$-linear mapping $\psi=\varphi - \kappa$. Then 
$$
\begin{aligned}
\psi(e,e,e) &=\varphi(e,e,e)-\kappa (e,e,e)\\
&=\varphi(e,e,e)-[e, [e, [e, X_0]]]\\
&=\varphi(e, e,e)-e\varphi(e, e, e)f-f\varphi(e, e, e)e\\
&=e\varphi(e,e,e)e+f\varphi(e,e,e)f\in \mathcal{Z(G)}.
\end{aligned}
$$
Thus, $\psi(e,e,e)\in  \mathcal{Z(G)}$. It is not difficult to check that the
mapping $\psi=\varphi-\kappa$ is also a $3$-Lie derivation on $\mathcal{G}$ and satisfies
the relation $\psi(e,e,e)\in  \mathcal{Z(G)}$.
\end{proof}

As we see in Proposition \ref{xxsec3.7}, the well-establishes mapping $\psi=\varphi-\kappa$ is a $3$-Lie derivation
on the generalized matrix algebra $\mathcal{G}$ such that $\psi(e,e,e)\in  \mathcal{Z(G)}$. In fact, $\psi$ possesses much more stronger 
property $\psi(x,y,z)\in  \mathcal{Z(G)}$ for all $x, y, z\in \mathcal{G}$.
Let us show the validity of this conclusion from multiple perspectives.

\begin{tcolorbox}[breakable, enhanced, blanker, left=3mm,right=3mm,
borderline west={2pt}{0pt}{green}]

\begin{lemma}\label{xxsec3.8}
With notations as above, we have
\begin{enumerate}
  \item [{\rm (1)}] $\psi(I, y, z)\in \mathcal{Z(G)}$ and $e\psi(I, y, z)f=f\psi(I, y, z)e=0$;
  \item [{\rm (2)}] $\psi(y, I, z)\in \mathcal{Z(G)}$ and $e\psi(y, I, z)f=f\psi(y, I, z)e=0$;
  \item [{\rm (3)}] $\psi(y, z, I)\in \mathcal{Z(G)}$ and $e\psi(y, z, I)f=f\psi(y, z, I)e=0$
\end{enumerate}
 for all $y,z\in \mathcal{G}$.
\end{lemma}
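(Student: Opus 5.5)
The plan is to use only the defining identity of a $3$-Lie derivation together with the fact that the identity $I$ is central. Recall that $\psi=\varphi-\kappa$ is a $3$-Lie derivation on $\mathcal{G}$; this was noted at the end of the proof of Proposition \ref{xxsec3.7}. In particular, for fixed $y,z\in\mathcal{G}$ the map $x\mapsto\psi(x,y,z)$ is a Lie derivation, and the same holds in the second and third components. The argument will not need the normalization $\psi(e,e,e)\in\mathcal{Z(G)}$.

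For (1), fix $y,z\in\mathcal{G}$ and take any $w\in\mathcal{G}$. Since $[I,w]=0$, the first-component analogue of \textbf{Claim 1} in the proof of Proposition \ref{xxsec3.7}, applied to the $3$-Lie derivation $\psi$, gives
$$
0=\psi(0,y,z)=\psi([I,w],y,z)=[\psi(I,y,z),w]+[I,\psi(w,y,z)]=[\psi(I,y,z),w].
$$
Hence $\psi(I,y,z)$ commutes with every element of $\mathcal{G}$, so $\psi(I,y,z)\in\mathcal{Z(G)}$.

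By the description of $\mathcal{Z(G)}$ recalled in Section \ref{xxsec2}, every central element has the diagonal form $\left[\begin{array}{cc} a & 0\\ 0 & b\end{array}\right]$. Therefore $e\psi(I,y,z)f=0$ and $f\psi(I,y,z)e=0$. As a direct check, $e\psi(I,y,z)f=\psi(I,y,z)ef=0$, since $ef=0$; the same computation gives $f\psi(I,y,z)e=0$.

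Statements (2) and (3) follow by the identical argument in the second and third components. For (2) we expand $0=\psi(y,[I,w],z)$, and for (3) we expand $0=\psi(y,z,[I,w])$, each by the Lie derivation rule in the relevant slot. In each case the term containing $[I,\cdot]$ vanishes, leaving $[\psi(y,I,z),w]=0$ and $[\psi(y,z,I),w]=0$ respectively. Centrality then forces the off-diagonal corners to vanish exactly as in (1).

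No step here should present a real obstacle. The only points needing care are that $\psi$ is indeed a $3$-Lie derivation, which holds because $\kappa$ is a permuting $3$-derivation by Remark \ref{xxsec3.6} and hence a $3$-Lie derivation, and that $\psi$ vanishes whenever one argument is $0$, as in \textbf{Claim 1}.
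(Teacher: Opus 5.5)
Your proposal is correct and matches the paper's proof: expand $0=\psi([I,w],y,z)$ by the Lie derivation rule in the relevant slot to get $[\psi(I,y,z),w]=0$ for all $w$, then read off the vanishing corners from the diagonal form of central elements. Your direct check $e\psi(I,y,z)f=\psi(I,y,z)ef=0$, and your remark that $\psi$ is a $3$-Lie derivation because $\kappa$ is a $3$-derivation, are correct supplements to the paper's terser argument.
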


\end{tcolorbox}

\begin{proof}
For any $x, y, z\in \mathcal{G}$, we have
$$
\begin{aligned}
0&=\psi(0,y,z)=\psi([I,x],y,z)\\
&=[\psi(I,y,z), x]+[I, \psi(x,y,z)]=[\psi(I,y,z), x].\\
\end{aligned}
$$
With notations as above, we have $\psi(I,y,z)\in \mathcal{Z(G)}$.
Thus $e\psi(I,y,z)f=f\psi(I,y,z)e=0$. The remaining results can be achieved in analogous computational approaches. 
\end{proof}

\begin{tcolorbox}[breakable, enhanced, blanker, left=3mm,right=3mm,
borderline west={2pt}{0pt}{gray}]

\begin{remark}\label{xxsec3.9}
With notations as above, for all $x, y, z\in A\cup B$,
the trilinear mapping $\psi: \mathcal{G}\times \mathcal{G}\times \mathcal{G}\longrightarrow \mathcal{G}$
defined by $\psi(x,y,z)=\varphi(x,y,z)-\kappa (x,y,z)$
is a 3-Lie derivation, which satisfies Condition $\psi(e,e,e)\in  \mathcal{Z(G)}$
and consequently fulfills the following equations $e\psi(e,e,e)f=0$ and $f\psi(e,e,e)e=0$.
Furthermore, by Lemma \ref{xxsec3.8} it follows that 
$$
\begin{aligned}
&e\psi(e,e,e)f=e\psi(e,e,f)f=e\psi(e,f,f)f=-e\psi(f,f,f)f\\
=&e\psi(f,e,f)f=e\psi(e,f,e)f=e\psi(f,e,e)f=e\psi(f,f,e)f=0.
\end{aligned}
$$
and that
$$
\begin{aligned}
&f\psi(e,e,e)e=f\psi(e,e,f)e=f\psi(e,f,f)e=-f\psi(f,f,f)e\\
=&f\psi(f,e,f)e=f\psi(e,f,e)e=e\psi(f,e,e)e=f\psi(f,f,e)e=0.
\end{aligned}
$$
\end{remark}
\end{tcolorbox}

\begin{tcolorbox}[breakable, enhanced, blanker, left=3mm,right=3mm,
borderline west={2pt}{0pt}{green}]

\begin{lemma}\label{xxsec3.10}
With notations as above, we observe 
$$
\psi(x, y, z)\in  \left[
\begin{array}
[c]{cc}%
A & O\\
O & B\\
\end{array}
\right]
$$
for all $x, y, z\in A \cup B$.
Furthermore, the following results 
$$
e\psi(x_1, y, z)e, e\psi(y,x_1, z)e, e\psi(y, z, x_1)e\in \mathcal{Z}(A)
$$ and
$$
f\psi(y, x_2, z)f, f\psi(x_2, y, z)f, f\psi(y, z, x_2)f\in \mathcal{Z}(B)
$$
hold true for all $x_1\in B, x_2\in A, y,z\in A\cup B$.
In particular, we have
\begin{enumerate}
  \item [{\rm (1)}] $\psi(y, e, z)=\psi(e,y, z)=\psi(y, z, e)\in \mathcal{Z(G)}$;
  \item [{\rm (2)}] $\psi(y, e, e)=\psi(e, y, e)=\psi(e, e, y)\in \mathcal{Z(G)}$;
  \item [{\rm (3)}] $\psi(e, e, e)=\psi(e, y, e)=\psi(e, e, y)\in \mathcal{Z(G)}$
\end{enumerate}
for all $y, z\in A\cup B$.
\end{lemma}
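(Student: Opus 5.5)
The plan is to imitate, for the $3$-Lie derivation $\psi=\varphi-\kappa$, the corner computations of \textbf{Claim 2} and \textbf{Claim 3} in the proof of Proposition \ref{xxsec3.7}. The new ingredient is that $\psi(e,e,e)\in\mathcal{Z(G)}$ has vanishing off-diagonal corners (Remark \ref{xxsec3.9}). I will also use Lemma \ref{xxsec3.8} to pass between $e$ and $f$ via $f=I-e$.

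\emph{Step 1: off-diagonal corners vanish.} Fix $y,z\in A\cup B$ and use the identities $[e,a]=0=[e,b]$ for $a\in A$, $b\in B$. I first treat the slots other than the first. For instance, from
$$0=\psi(e,y,[z,e])=[\psi(e,y,z),e]+[z,\psi(e,y,e)]$$
and taking the $e\cdot f$ and $f\cdot e$ corners, I express $e\psi(e,y,z)f$ and $f\psi(e,y,z)e$ as $z$ times $e\psi(e,y,e)f$ or $f\psi(e,y,e)e$ (up to sign). I then reduce those in the same way to multiples of $e\psi(e,e,e)f=0=f\psi(e,e,e)e$. The mixed cases ($y\in A$, $z\in B$) behave the same way, since products like $e b$ or $a f$ vanish. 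Next I free the first slot with
$$0=\psi([x,e],y,z)=[\psi(x,y,z),e]+[x,\psi(e,y,z)],$$
whose $e\cdot f$ corner gives $e\psi(x,y,z)f=\pm\, x\,e\psi(e,y,z)f$ or $\pm\, e\psi(e,y,z)f\,x$, which is $0$ by the previous step. The same holds for the $f\cdot e$ corner. Arguing symmetrically in the other slots shows $\psi(x,y,z)\in\left[\begin{array}{cc}A&O\\O&B\end{array}\right]$.

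\emph{Step 2: centrality of the diagonal corners.} Let $x_1\in B$ and $a\in A$. Since $[x_1,a]=0$,
$$0=\psi([x_1,a],y,z)=[\psi(x_1,y,z),a]+[x_1,\psi(a,y,z)].$$
The $e\cdot e$ corner of the second term vanishes because $ex_1=x_1e=0$. Hence $[e\psi(x_1,y,z)e,a]=0$, so $e\psi(x_1,y,z)e\in\mathcal{Z}(A)$. The same argument in the second and third slots, and with the roles of $A,B$ swapped, gives the $\mathcal{Z}(B)$ statements.

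\emph{Step 3: the particular cases (1)--(3).} Write $\psi(e,y,z)=\psi(I,y,z)-\psi(f,y,z)$. Lemma \ref{xxsec3.8} and Step 2 (applied with $x_1=f\in B$) give $e\psi(e,y,z)e\in\mathcal{Z}(A)$, and Step 2 with $x_2=e$ gives $f\psi(e,y,z)f\in\mathcal{Z}(B)$. It remains to check the compatibility $am=mb$, $na=bn$ that characterizes $\mathcal{Z(G)}$. For this I expand
$$\psi(m,y,z)=\psi([e,m],y,z)=[\psi(e,y,z),m]+[e,\psi(m,y,z)]$$
and take the $e\cdot f$ corner, which yields $e\psi(e,y,z)e\,m=m\,f\psi(e,y,z)f$. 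The analogous expansion with $n\in N$ yields $n\,e\psi(e,y,z)e=f\psi(e,y,z)f\,n$. Therefore $\psi(e,y,z)\in\mathcal{Z(G)}$, and the same argument works with $e$ in the other slots. The stated equalities among $\psi(y,e,z)$, $\psi(e,y,z)$ and $\psi(y,z,e)$ I expect to get by comparing corners in the relations above together with Lemma \ref{xxsec3.8} and $\psi(e,e,e)\in\mathcal{Z(G)}$.

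The main obstacle will be this last point. Establishing centrality is routine, but the literal equalities between different slot positions are not obviously forced, and this is where I expect the bookkeeping (signs and the repeated use of Lemma \ref{xxsec3.8}) to be delicate.
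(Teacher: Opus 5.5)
Your Steps 1--3 are correct and use the same identities as the paper's proof. These are: reading corners in $0=\psi(\dots,[x,e],\dots)$; expanding the slot occupied by $e$ through $[e,m]$ and $[n,e]$, which gives $e\psi e\,m=m\,f\psi f$ and $n\,e\psi e=f\psi f\,n$; and using $0=\psi([x_1,a],y,z)$ for the $\mathcal{Z}(A)$ and $\mathcal{Z}(B)$ statements. The only difference is the order. The paper bootstraps through centrality: first $\psi(x,e,e)\in\mathcal{Z(G)}$ in (3.23), then $\psi(x,y,e)\in\mathcal{Z(G)}$ in (3.31), then diagonality of $\psi(x,y,z)$. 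You instead use that $e[x,D]f$ equals $x\,eDf$ for $x\in A$ or $-eDf\,x$ for $x\in B$. So diagonality only needs vanishing off-diagonal corners at each stage and can be pushed straight down to $\psi(e,e,e)$, with centrality proved separately. Both routes give the same conclusions.

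The step you flag as the main obstacle cannot be carried out, because the literal equalities in (1)--(3) are false. Take $\mathcal{G}=T_3(F)$, the upper triangular $3\times 3$ matrices over a field $F$ of characteristic $0$, with $e=E_{11}$. Then $A=F$, $M=F^{1\times 2}$, $N=0$, $B=T_2(F)$, and $\mathcal{G}$ satisfies all hypotheses of Proposition \ref{xxsec3.1}. Let $\varphi(x,y,z)=x_{11}(y_{11}+y_{22})z_{11}I$. Each coordinate functional vanishes on $[\mathcal{G},\mathcal{G}]$ and the values are central, so $\varphi$ is a $3$-Lie derivation. Moreover $\varphi(e,e,e)=I$ and $X_0=0$, hence $\psi=\varphi$. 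With $f=E_{22}+E_{33}$ one gets $\psi(e,f,e)=I$ but $\psi(f,e,e)=0=\psi(e,e,f)$. This contradicts (1) (with $y=f$, $z=e$), (2) and (3).

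The paper's own proof likewise establishes only membership, namely (3.23), (3.31) and their ``analogous'' versions. Membership is also all that is used afterwards, e.g.\ $\psi(e,e,mn-nm)\in\mathcal{Z(G)}$ in Lemma \ref{xxsec3.11}. So read (1)--(3) as ``each of these elements lies in $\mathcal{Z(G)}$''. Your Step 3 already proves exactly that, and you should stop there.
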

\end{tcolorbox}

\begin{proof}
Taking into account Remark \ref{xxsec3.9} together with the Lie derivation
structure of the first component, we get  
$$
\begin{aligned}
0=&\psi([x,e],e,e)=[\psi(x,e,e),e]+[x,\psi(e,e,e)]\\
=&\psi(x,e,e)e-e\psi(x,e,e)\\
\end{aligned}\eqno{(3.18)}
$$
for all $x\in A\cup B$.
Premultiply (3.18) by $e$ and postmultiply it the  by $f$ to obtain 
$e\psi(x,e,e)f=0$. Similarly, premultiply (3.18) by $f$ and postmultiply it by $e$
to yield $f\psi(x,e,e)e=0$. This implies that
$$
\psi(x, e, e) \in \left[
\begin{array}
[c]{cc}%
A & O\\
O & B\\
\end{array}
\right], \, \, \, \, \forall x\in A\cup B.
$$
For any $x\in A\cup B, m\in M$, we have
$$
\begin{aligned}
\psi(x,e,m)=&\psi(x,e,[e,m])\\
=&[\psi(x,e,e),m]+[e,\psi(x,e,m)]\\
=&\psi(x,e,e)m-m\psi(x,e,e)+e\psi(x,e,m)-\psi(x,e,m)e.\\
\end{aligned}\eqno{(3.19)}
$$
Premultiply (3.19) by $e$ and postmultiply it by $f$ to get
$$
e\psi(x,e,e)m=m\psi(x,e,e)f, \, \, \, \, \forall x\in A\cup B, m\in M.         
\eqno{(3.20)}
$$
Adopting an analogous computational technique and approach gives
$$
\begin{aligned}
\psi(x,e,n)=&\psi(x,e,[n,e])\\
=&[\psi(x,e,n),e]+[n,\psi(x,e,e)]\\
=&\psi(x,e,n)e-e\psi(x,e,n)+n\psi(x,e,e)-\psi(x,e,e)n
\end{aligned}\eqno{(3.21)}
$$
for all $n\in N$. Multiplying the left side of (3.21) by $f$ and multiplying the right side of (3.21) by $e$ 
produces 
$$
n\psi(x,e,e)e=f\psi(x,e,e)n, \, \, \, \, \forall x\in A\cup B, n\in N
\eqno{(3.22)}
$$
Combining (3.20) with (3.22) yields
$$
\psi(x, e, e) =
\left[
\begin{array}
[c]{cc}%
e\psi(x, e, e)e & 0\\
0 & f\psi(x, e, e)f\\
\end{array}
\right]\in \mathcal{Z}(\mathcal{G}), \, \, \, \, \forall x\in A\cup B.                 
\eqno{(3.23)}
$$

With the help of $(3.23)$, we have
$$
\begin{aligned}
0=&\psi(x,[y_1,e],e)=[\psi(x, y_1, e),e]+[y_1, \psi(x,e,e)]\\
=&\psi(x,y_1,e)e-e\psi(x,y_1,e)
\end{aligned}\eqno{(3.24)}
$$
for all $x, y_1\in A\cup B$. Premultiply (3.24) by $e$ and postmultiply it by $f$ to get
$$
e\psi(x, y_{1}, e)f=0, \, \, \, \, \forall x, y_1\in A\cup B.
\eqno{(3.25)}
$$ 
Similarly, multiplying the left side of (3.24) by $f$ and multiplying the right side of (3.24) by $e$ gives 
$$
f\psi(x, y_{1}, e)e=0, \, \, \, \, \forall x, y_1\in A\cup B.
\eqno{(3.26)}
$$
In view of (3.25) and (3.26), we know that
$$
\psi(x, y_{1}, e) \in  \left[
\begin{array}
[c]{cc}%
A & O\\
O & B\\
\end{array}
\right], \, \, \, \, \forall x, y_1\in A\cup B.
$$
For any $x,y\in A\cup B, m\in M$, we have
$$
\begin{aligned}
\psi(x,y,m)=&\psi(x, y,[e,m])\\
=&[\psi(x,y,e),m]+[e,\psi(x,y,m)]\\
=&\psi(x,y,e)m-m\psi(x,y,e)+e\psi(x,y,m)-\psi(x,y,m)e.
\end{aligned}\eqno{(3.27)}
$$
Multiplying the left side of (3.27) by $e$ and multiplying the right side of (3.27) by $f$ gives rise to
$$
e\psi(x,y,e)m=m\psi(x,y,e)f, \, \, \, \, \forall x, y\in A\cup B, m\in M.
\eqno{(3.28)}
$$
Using an analogous computational method, one can arrive at
$$
\begin{aligned}
\psi(x,y,n)=&\psi(x,y,[n,e])\\
=&[\psi(x,y,n),e]+[n,\psi(x,y,e)]\\
=&\psi(x,y,n)e-e\psi(x,y,n)+n\psi(x,y,e)-\psi(x,y,e)e\\
\end{aligned}\eqno{(3.29)}
$$
for all $x,y\in A\cup B$, $n\in N$.
Premultiply (3.29) by $f$ and postmultiply (3.29) by $e$ to get
$$
n\psi(x,y,e)e=f\psi(x,y,e)n, \, \, \, \, \forall x, y \in A\cup B, n\in N.
\eqno{(3.30)}
$$
Taking into account (3.28) and (3.30) we see that
$$
\psi(x, y, e) =
\left[
\begin{array}
[c]{cc}%
 e\psi(x, y, e)e & 0\\
0 & f\psi(x, y, e)f \\
\end{array}
\right] \in    \mathcal{Z}(\mathcal{G}), \, \, \, \, \forall x, y\in A\cup B.            
\eqno{(3.31)}
$$

By invoking the relation (3.31) it follows that 
$$
\begin{aligned}
0=&\psi(x, y ,[z,e])=[\psi(x, y , z),e]+[z, \psi(x, y, e)]\\
=&\psi(x, y , z)e-e\psi(x, y , z)
\end{aligned}\eqno{(3.32)}
$$
for all $x, y, z\in A\cup B$. Premultiplying (3.32) by $e$ and postmultiplying it by $f$ to 
obtain 
$$
e\psi(x, y, z)f=0, \, \, \, \, \forall x, y, z\in A\cup B.
\eqno{(3.33)}
$$ 
Similarly, multiplying the left side of (3.32) by $f$ and multiplying the right side of (3.32) by $e$ yields 
$$
f\psi(x, y, z)e=0, \, \, \, \, \forall x, y, z\in A\cup B.
\eqno{(3.34)}
$$
The relations (3.33) and (3.34) give
$$
\psi(x, y, z) \in  \left[
\begin{array}
[c]{cc}%
A & O\\
O & B\\
\end{array}
\right], \, \, \, \, \forall x, y, z\in A\cup B.
$$

For any $x\in A, y\in B$ and $z, w\in A\cup B$, we obtain $[x, y]=0$, from which
$$
\begin{aligned}
0=&\psi([x, y], z, w)=[\psi(x, z, w), y]+[x, \psi(y, z, w)]\\
=&\psi(x, z, w)y-y\psi(x, z, w)+x\psi(y, z, w)-\psi(y, z, w)x\\
\end{aligned}\eqno{(3.35)}
$$
follows. Furthermore, multiplying the left side of (3.35) by $f$ while multiplying the right side of it by $f$
yields $f\psi(x, z, w)y=y\psi(x, z, w)f$. Similarly, premultiplying and postmultiplying (3.35) by $e$
gives $x\psi(y, z, w)e=e\psi(y, z, w)x$.
Consequently, both $f\psi(x, z, w)f\in \mathcal{Z}(B)$ and $e\psi(y, z, w)e\in \mathcal{Z}(A)$ follow
for all $x\in A ,y\in B$ and $z, w\in A\cup B$.

Employing analogous computational methods one can arrive at the following relations:
$f\psi(z, x, w)f\in \mathcal{Z}(B)$ and $e\psi(z, y, w)e\in \mathcal{Z}(A)$;
$f\psi(z, w,x)f\in \mathcal{Z}(B)$ and $e\psi(z, w, y)e\in \mathcal{Z}(A)$
for all $x\in A ,y\in B$ and $z, w\in A\cup B$.

\end{proof}

\begin{tcolorbox}[breakable, enhanced, blanker, left=3mm,right=3mm,
borderline west={2pt}{0pt}{green}]

\begin{lemma}\label{xxsec3.11}
With notations as above, for any $x\in A\cup B$, $m\in M$, and $n\in N$, we have
\begin{enumerate}
  \item [{\rm (i)}]  $\psi(x,y_{1},m)=\psi(y_{1},x,m)=\psi(x,m,y_{1})=\psi(y_{1},m,x)=\psi(m,x,y_{1})=\psi(m,y_{1},x)=0$
  for all $y_{1}\in A\cup M \cup B$;
  \item [{\rm (ii)}] $\psi(x,y_{2},n)=\psi(y_{2},x,n)=\psi(x,n,y_{2})=\psi(y_{2},n,x)=\psi(n,x,y_{2})=\psi(n,y_{2},x)=0$
  for all $y_{2}\in A\cup N \cup B$.
\end{enumerate}
\end{lemma}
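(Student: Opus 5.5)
The plan is to reduce every value in (i) and (ii) to the off-diagonal corner, then kill that corner using the bimodule-homomorphism structure and hypothesis (5) of Proposition \ref{xxsec3.1}. I treat (i) in detail; (ii) is symmetric, with $m=[e,m]$ replaced by $n=[n,e]$ and the roles of $e,f$ interchanged. By symmetry of the argument in the three slots, it suffices to consider $\psi(x,y_1,m)$ with $m$ in the third slot. The other positions are handled identically, using Lemma \ref{xxsec3.8} and Lemma \ref{xxsec3.10} in the corresponding component.

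\emph{Step 1 (localisation).} Let $y_1\in A\cup B$. By (3.31) and Lemma \ref{xxsec3.10}, $\psi(x,y_1,e)\in\mathcal{Z(G)}$. Hence $\psi(x,y_1,m)=\psi(x,y_1,[e,m])=[e,\psi(x,y_1,m)]$. Multiplying on both sides by $e,f$ gives $e\psi(x,y_1,m)e=0=f\psi(x,y_1,m)f$ and $2f\psi(x,y_1,m)e=0$. Since $\mathcal{R}$ is $2$-torsionfree, $\psi(x,y_1,m)=e\psi(x,y_1,m)f\in M$.

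\emph{Step 2 (reduction to $x=e$).} For $x\in A$ we have $[x,e]=0$. Expanding $0=\psi([x,e],y_1,m)$ with Step 1 gives $\psi(x,y_1,m)=x\psi(e,y_1,m)$. For $x\in B$, the same expansion with $[x,f]=0$ gives $\psi(x,y_1,m)=-\psi(f,y_1,m)\,x$. Moreover $\psi(I,y_1,m)$ is central by Lemma \ref{xxsec3.8} and lies in $M$ by Step 1, so it vanishes, and therefore $\psi(f,y_1,m)=-\psi(e,y_1,m)$. Repeating the argument in the second slot reduces everything to $\mathcal{F}(m):=e\psi(e,e,m)f$, together with $\mathcal{E}(n):=f\psi(e,e,n)e$ for part (ii).

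\emph{Step 3 (the main obstacle).} We must show $\mathcal{F}=0$. From $am=[a,m]$, $mb=[m,b]$ and the centrality of $\psi(e,e,a)$ and $\psi(e,e,b)$ (Lemma \ref{xxsec3.10}(2)), $\mathcal{F}$ is an $(A,B)$-bimodule homomorphism, and likewise $\mathcal{E}$ is a $(B,A)$-bimodule homomorphism. Next apply $\psi(e,e,\cdot)$ to $[m,n]=mn-nm$. Since $mn\in A$ and $nm\in B$, the left side is central, which yields
\[
\mathcal{F}(m)n+m\mathcal{E}(n)\in\mathcal{Z}(A),\qquad n\mathcal{F}(m)+\mathcal{E}(n)m\in\mathcal{Z}(B).
\]
I would first try to upgrade this to $\mathcal{F}(m)n+m\mathcal{E}(n)=0$. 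The map $(m,n)\mapsto\mathcal{F}(m)n+m\mathcal{E}(n)$ is $A$-linear on the left, so its image generates a central ideal of $A$, and hypothesis (2) forces it to vanish (and similarly on the $B$ side). Then $(\mathcal{F},\mathcal{E})$ is a special pair, so by hypothesis (5) we have $\mathcal{F}(m)=\omega_0m+m\omega_1$. Using hypothesis (1) and $\eta$, this becomes $\mathcal{F}(m)=\lambda m$ with $\lambda\in\pi_A(\mathcal{Z(G)})$.

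\emph{Step 4 (killing $\lambda$).} It remains to see $\lambda=0$. The key observation is that $\kappa(e,e,m)=0$, because $[m,X_0]=0$ by Claim 4 of Proposition \ref{xxsec3.7}, so $\mathcal{F}(m)=e\varphi(e,e,m)f$. I would compute this via (3.3) and Claim 3 and compare it with the value obtained from $m=[m,f]$ in the first slot, to extract $2\lambda m=0$. Then $\lambda=0$ follows from $2$-torsionfreeness together with hypothesis (3). Finally, the case $y_1\in M$ follows by the same Step 1 manipulation applied in the second slot, again with Step 2's reductions. I expect Step 3, and especially Step 4, to be where the real work lies.
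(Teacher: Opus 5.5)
\paragraph{Steps 1--3 are sound.} They follow the paper's Claims 1--2 of this lemma: localisation into $M$, reduction to $\psi(e,e,m)$, the bimodule homomorphisms $\xi,\zeta$, the central-ideal argument giving a special pair, and the standard form $\psi(e,e,m)=\lambda m$. There are two harmless slips. For $x\in B$ one gets $\psi(x,y_1,m)=\psi(f,y_1,m)x=-\psi(e,y_1,m)x$. Also, hypothesis (2) concerns only one of $A,B$, so the other corner must come from faithfulness of $M$ (via $\eta$), not ``similarly''.

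\paragraph{The genuine gap is Step 4.} The identities you name carry no information about the off-diagonal corner:
\begin{itemize}
\item compressing (3.3) by $e$ and $f$ gives only $e\varphi(e,e,e)e\,m=m\,f\varphi(e,e,e)f$, because the term $e\varphi(e,e,m)f$ cancels;
\item Claim 3 only fixes the diagonal corners of $\varphi(e,e,m)$;
\item $m=[m,f]$ in the first slot concerns $\psi(m,\cdot,\cdot)$, which is unrelated to $\psi(e,e,m)$.
\end{itemize}
More decisively, no argument using only these identities, $2$-torsion-freeness and hypothesis (3) can force $\lambda=0$. In $T_2(\mathbb{Q})$ put $h=e-f$ and $u=e_{12}$. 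A linear map $D$ is a Lie derivation iff $D(I)\in\mathbb{Q}I$, $D(h)\in\mathbb{Q}u+\mathbb{Q}I$ and $D(u)\in\mathbb{Q}u$. Hence the trilinear map with $\psi(h,h,u)=\psi(h,u,u)=u$, and zero on all other triples from $\{I,h,u\}$, is a $3$-Lie derivation. It has $\psi(e,e,e)=0$, so $X_0=0$ (add $x_{11}y_{11}z_{11}I$ if you want $\varphi(e,e,e)\neq 0$). Here $N=0$, so Claim 4 and the special-pair condition hold automatically, and (1), (3), (5) and $2$-torsion-freeness all hold. Yet $\psi(e,e,u)=\tfrac14u$ and $\psi(e,u,u)=\tfrac12u$. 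Only (2) and (4) fail, so killing $\lambda$ must use them again.

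\paragraph{How the paper kills $\lambda$.} It splits into two cases.
\begin{itemize}
\item If $MN\neq 0$ or $NM\neq 0$: expanding $\psi(e,[n,e],m)$ and $\psi(e,n,[e,m])$ shows $\psi(e,n,m)$ is block diagonal with $e\psi(e,n,m)e=-\lambda mn$. Expanding $\psi([n_1,e],n,m)$ and $\psi([e,m_1],n,m)$ shows this diagonal part is central. So $\{\lambda mn\}$ spans a central ideal of $A$, which vanishes by (2), and (3) then gives $\lambda=0$.
\item If $MN=0=NM$: pick $a_1,a_2\in A$ with $[a_1,a_2]\neq 0$ using (4). Apply (LB) of Lemma \ref{xxsec3.2} to the Lie biderivation $\psi(\cdot,\cdot,m)$ at $x=y=e$, $u=a_1$, $v=a_2$. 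All other terms vanish since $[e,a_i]=0$, leaving $[\psi(e,e,m),[a_1,a_2]]=0$. Hence $[a_1,a_2]\lambda M=0$, so $[a_1,a_2]\lambda=0$ by faithfulness, and $\lambda=0$ by (3).
\end{itemize}

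\paragraph{The case $y_1\in M$ has the same problem.} Your last sentence does not close it: the Step 1 manipulation only yields $\psi(x,y_1,m)\in M$, and the example above has $\psi(e,u,u)\neq 0$. The paper first shows $\psi(e,y_1,n)=0$ by $2$-torsion-freeness. It then uses $\psi(e,y_1,[m,n])$ to see that $\tau=\psi(e,y_1,\cdot)$ and $\sigma=\psi(e,y_2,\cdot)$ form a special pair with $\tau(m)n=0=n\tau(m)$. Finally it kills the resulting scalar by essentially the same two-case argument.
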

\end{tcolorbox}

\begin{proof}
For this lemma, we shall adopt a pairwise proof strategy, with Conclusions $\psi(x,y_{1},m)=0$
and $\psi(x,y_{2},n)=0$ being the primary objects of demonstration
for all $x\in A\cup B$, $m\in M$, $n\in N$, $y_{1}\in A\cup M \cup B$ and $y_{2}\in A\cup N \cup B$.
The remaining results follow in a similar way, and hence their proofs are omitted.


\textbf{Claim 1.} With notations as above, for arbitrary elements $m\in M, n\in N$, we have
$$
\psi(x,y,m)=e\psi(x,y,m)f\in M ~~\text{and}~~\psi(x,z,n)=f\psi(x,z,n)e\in N
$$
for all $x\in A\cup B, y\in A\cup B\cup M, z\in A\cup B\cup N$.

For any $x\in A\cup B, m\in M$, in light of Lemma \ref{xxsec3.10}, we know that
$$
\begin{aligned}
-\psi(x, m, e)&=\psi(x,[m, e], e)=[\psi(x,m,e),e]+[m,\psi(x,e,e)]\\
=&\psi(x,m,e)e-e\psi(x,m,e).
\end{aligned}\eqno{(3.36)}
$$
For the equation (3.36), performing simultaneous left/right multiplication by either $e$ or $f$
yields 
$$
e\psi(x,m,e)e=0\, \, \, \, \,  \text{and} \, \, \, \, \, f\psi(x,m,e)f=0, \, \, \, \, \, \forall x\in A\cup B, m\in M. 
$$ 
Premultiply (3.36) by $f$ and postmultiply it by $e$ to produce 
$$
f\psi(x,m,e)e=0, \, \, \, \, \, \forall x\in A\cup B, m\in M. 
$$
By the well-known Pierce decomposition theorem, we ultimately establish the validity of
$$
\psi(x,m,e)=e\psi(x,m,e)f\in M, \, \, \, \, \forall x\in A\cup B, m\in M.
$$
Using analogous computational methods, we see that 
$$
\psi(x,n,e)\in N,\, \, \, \, \forall x\in A\cup B, n\in N.
$$
The above calculations lead to
$$
\psi(x,m,e)\in M\, \, \, \, \, \text{and}\, \, \, \, \, \psi(x,n,e)\in N, \, \, \, \, \, \forall x\in A\cup B, m\in M, n\in N.
\eqno{(3.37)}
$$

Thus, for arbitrary elements $x\in A\cup B, y\in A\cup B\cup M$ and $m\in M$, we get
$$
\begin{aligned}
\psi(x,y,m)=&\psi(x,y,[e,m])=[\psi(x,y,e),m]+[e,\psi(x,y,m)]\\
=&\psi(x,y,e)m-m\psi(x,y,e)+e\psi(x,y,m)-\psi(x,y,m)e\\
=&\left\{
\begin{array}{rcl}
e\psi(x,y,e)m-m\psi(x,y,e)f+e\psi(x,y,m)-\psi(x,y,m)e& &{\text{if}~ y\in A\cup B},\\
e\psi(x,y,m)-\psi(x,y,m)e& &{\text{if}~ y\in M}.
\end{array} \right.
   \end{aligned}\eqno{(3.38)}
$$
Premultiplying and postmultiplying (3.38) by $e$ to produce  
$$
e\psi(x,y,m)e=0, \, \, \, \, \, \forall x\in A\cup B, y\in A\cup B\cup M, m\in M.
$$
Performing left multiplication by $f$ and right multiplication by $e$ for (3.38), 
we obtain 
$$
f\psi(x,y,m)e=0, \, \, \, \, \, \forall x\in A\cup B, y\in A\cup B \cup M, m\in M. 
$$
Multiplying the left and right sides of (3.38) by $f$, 
we get 
$$
f\psi(x,y,m)f=0, \, \, \, \, \, \forall x\in A\cup B, y\in A\cup B \cup M, m\in M.
$$
Applying the Pierce decomposition theorem again gives
$$
\psi(x,y,m)=e\psi(x,y,m)f\in M,  \, \, \, \, \, \forall x\in A\cup B, y\in A\cup B\cup M, m\in M.
$$

Following the aforementioned approach and computational strategies, the following relation 
$$
\psi(x,z,n)=f\psi(x,z,n)e\in N, \, \, \, \, \, \forall x\in A\cup B, z\in A\cup B\cup N, n\in N.
$$
can be eventually achieved.

\textbf{Claim 2.} With notations as above, we have
$$
\psi(x,z,m)=0\, \, \, \, \, \text{and}\, \, \, \, \, \psi(x,w,n)=0, \, \, \, \, \forall x,  z, w\in  A\cup B, m\in M, n\in N.
$$ 
For any $a_{1}\in \mathcal{A}, a_{2}\in A\cup B, m\in M$,
it follows from {\bf Claim 1} that
$$
\begin{aligned}
0=&\psi([e,a_{1}],a_{2},m)\\
=&[\psi(e, a_{2}, m),a_{1}]+[e,\psi(a_{1}, a_{2},m)]\\
=&\psi(e, a_{2}, m)a_{1}-a_{1}\psi(e, a_{2}, m)+e\psi(a_{1}, a_{2}, m)-\psi(a_{1}, a_{2}, m)e\\
=&-a_{1}\psi(e, a_{2}, m)+e\psi(a_{1}, a_{2}, m).
\end{aligned}
$$
This implies that 
$$
a_1\psi(e, a_2, m)=e\psi(a_{1}, a_{2}, m), \, \, \, \, \, \forall a_1\in \mathcal{A}, a_2\in A\cup B, m\in M.
\eqno{(3.39)}
$$
On the other hands, for all $a_2\in A, m\in M$, we have
$$
\begin{aligned}
0=e\psi(e, [e,a_{2}], m)f&=e[\psi(e, e, m),a_{2}]+[e,\psi(e, a_{2}, m)]f\\
=&\psi(e, e, m)a_{2}-a_{2}\psi(e, e, m)+e,\psi(e, a_{2}, m)-\psi(e, a_{2}, m)e\\
=&-a_{2}\psi(e, e, m)+e\psi(e, a_{2}, m).
\end{aligned}
$$
This gives rise to  
$$
a_2\psi(e, e, m)=e\psi(e, a_2, m), \, \, \, \, a_2\in A, m\in M.
\eqno{(3.40)}
$$
In view of the relations (3.39) and (3.40), we infer that for all $a_1\in A$
$$
\psi(a_{1}, a_{2}, m)=e\psi(a_{1}, a_{2}, m)f=\left\{
\begin{array}{rcl}
a_{1}a_{2}\psi(e, e, m)f=a_{2}a_{1}\psi(e, e, m)f& &{\text{if}~ a_{2}\in A,}\\
a_{1}\psi(e, e, m)a_{2}& &{\text{if}~ a_{2}\in B.}
\end{array} \right.\eqno{(3.41)}
$$
Using similar computational techniques and methods, for all $a_1\in A$, we arrive at
$$
\psi(a_{1}, a_{2}, n)=f\psi(a_{1}, a_{2}, n)e=\left\{
\begin{array}{rcl}
f\psi(e, e, n)a_{1}a_{2}=f\psi(e, e, n)a_{2}a_{1}& &{\text{if}~ a_{2}\in A,}\\
a_{2}\psi(e, e, n)a_{1}& &{\text{if}~ a_{2}\in B.}
\end{array} \right.\eqno{(3.42)}
$$

Considering the relations (3.41) and (3.42), we observe that for arbitrary elements
$x,y\in A\cup B$, the proofs of conclusions $\psi(x,y,m)=0$ and $\psi(x,y,n)=0$
can be reduced to those of $\psi(e,e,m)=0$ and $\psi(e,e,n)=0$ by using 
similar computational techniques and methods for all $m\in M, n\in N$.
In the subsequent, let us provide the proofs of
$\psi(e,e,m)=0$ and $\psi(e,e,n)=0$ for all $m\in M, n\in N$.

Let us construct the following two mappings
$$
\begin{aligned}
\xi: M & \longrightarrow M\\
m & \longmapsto \psi(e,e,m), \, \, \, \, \, \forall m\in M.
\end{aligned}
$$ 
and 
$$
\begin{aligned}
\zeta: N & \longrightarrow N\\
n & \longmapsto \psi(e,e,,n), \, \, \, \, \, \forall n\in N.
\end{aligned}
$$
For arbitrary elements $a\in A, b\in B$ and $m\in M$, we have
$$
\begin{aligned}
\xi(amb) &=\psi(e,e,amb)=e\psi(e,e,[a,mb])f\\
&=e([\psi(e,e,a),mb]+[a,\psi(e,e,mb)])f\\
&=a\psi(e,e,mb)f=a\psi(e,e,[m,b])f\\
&=a([\psi(e,e,m),b]+[m,\psi(e,e,b)])f\\
&=a\psi(e,e,m)b=a\xi(m)b.\\
\end{aligned}
$$
Consequently, the mapping $\xi: M\longrightarrow M$ is an
$(A,B)$-bimodule homomorphism. In an analogous way, one can show that 
the mapping $\zeta: N\longrightarrow N$ is a $(B,A)$-bimodule homomorphism.

According to the definition of generalized matrix algebras, we know that $mn\in A$ and $nm\in B$ 
holds true for all $m\in M, n\in N$. Combining this fact with Lemma \ref{xxsec3.10} yields
$$
\begin{aligned}
\mathcal{Z}(\mathcal{G}) \ni \psi(e,e,mn-nm)=&\psi(e,e,[m,n])\\
=&[\psi(e,e,m),n]+[m,\psi(e,e,n)]\\
=&\psi(e,e,m)n-n\psi(e,e,m)+m\psi(e,e,n)-\psi(e,e,n)m.
\end{aligned}
$$
With the help of above equation, we have
$$
e\psi(e,e,[m,n])e=e\psi(e,e,m)n+m\psi(e,e,n)e=\xi(m)n+m\zeta(n)\in \mathcal{Z}(A), \,  \, \forall m\in M, n\in N.    \eqno{(3.43)}
$$
and
$$
f\psi(e,e,[m,n])f=-n\psi(e,e,m)f-f\psi(e,e,n)m=-n\xi(m)-\zeta(n)m\in \mathcal{Z}(B),  \, \, \forall m\in M. n\in N.   \eqno{(3.44)}
$$
Substituting $am$ for $m$ in (3.43), we obtain 
$$
\mathcal{Z(G)}\ni e\psi(e,e,[am,n])e=\xi(am)n+am\zeta(n)=a\xi(m)n+am\zeta(n)=a\psi(e,e,[m,n])e.
$$
Replacing $n$ by $na$ in (3.44) give rises to 
$$
\mathcal{Z(G)}\ni e\psi(e,e,[m,na])e=\xi(m)na+m\zeta(na)=\xi(m)na+m\zeta(n)a=e\psi(e,e,[m,n])a.
$$
Thus the set $U=\{e\psi(e,e,[m,n])e \, \mid\, \forall m\in M, n\in N\}$ is a central ideal of algebra $A$.
According to the hypothesis (2) in Proposition \ref{xxsec3.1}, we might assume that 
the algebra $A$ contains no nonzero central ideals, 
In this case, the conclusion $U=0$ immediately follows, and consequently $\xi(m)n+m\zeta(n)=0$.
Applying the algebraic isomorphism $\eta:\pi_A(\mathcal{Z(G)})\longrightarrow
\pi_B(\mathcal{Z(G)})$ establishes the validity of equation
$n\xi(m)+\zeta(n)m=0$. Thus we assert that the mapping
$\xi:M \longrightarrow M$ and the mapping $\zeta: N\longrightarrow N$
constitute a special pair $(\xi, \zeta)$ of bimodule homomorphisms. Therefore,
by hypothesis (1) and (5) of Proposition \ref{xxsec3.1},
there exist central elements $a_0\in \mathcal{Z}(A)$
and $b_0\in \mathcal{Z}(B)$ satisfies equations
$$
\xi(m)=a_0m+mb_0=(a_0+\eta(b_0))m=\theta_0 m~\text{and}~\zeta(n)=-na_0-b_0n=-n(a_0+\eta(b_0))=-n\theta_0,
\eqno{(3.45)}
$$
where $\theta_0=a_0+\eta(b_0)\in \mathcal{Z}(A)=\pi_{A}(\mathcal{Z(G)})$ for all $m\in M, n\in N$.


We now proceed to prove that the central element $\theta_0\in \mathcal{Z(A)}$
possess property $\theta_0=0$. It should be noted that the proof will be carried out
by considering two distinct cases: either $MN\neq0$ or $NM\neq0$, and $MN=0=NM$.

Let us first consider the case of either $MN\neq0$ or $NM\neq0$. In this case, we have
$$
\begin{aligned}
\psi(e,n,m)=&\psi(e,[n,e],m)=[\psi(e,n,m),e]+[n,\psi(e,e,m)]\\
=&\psi(e,n,m)e-e\psi(e,n,m)+n\psi(e,e,m)-\psi(e,e,m)n
\end{aligned}\eqno{(3.46)}
$$
for all $m\in M, n\in N$. 
By multiplying both sides of equation (3.46) by either $e$ or $f$,
we conclude that 
$$
e\psi(e,n,m)e=-e\psi(e,e,m)n,~~f\psi(e,n,m)f=n\psi(e,e,m)f,~~e\psi(e,n,m)f=0
\eqno{(3.47)}
$$
for all $m\in M, n\in N$. On the other hand, 
$$
\begin{aligned}
\psi(e,n,m)=&\psi(e,n,[e,m])=[\psi(e,n,e),m]+[e,\psi(e,n,m)]\\
=&\psi(e,n,e)m-m\psi(e,n,e)+e\psi(e,n,m)-\psi(e,n,m)e.
\end{aligned}\eqno{(3.48)}
$$
holds true for all $m\in M, n\in N$. 
Premultiply the above equation by $f$ and postmultiply it by $e$ to get 
$2f\psi(e,n,m)e=0$. Since $\mathcal{R}$ is 2-torsion-free, we see that $f\psi(e,n,m)e=0$.
Furthermore, we have $\psi(e,n,m)\in  \left[
\begin{array}
[c]{cc}%
A & O\\
O & B\\
\end{array}
\right]$ 
for all $m\in M, n\in N$.

For all $m\in M, n,n_{1}\in N$, we have
$$
\begin{aligned}
\psi(n_{1},n,m)=&\psi([n_{1},e],n,m)\\
=&[\psi(n_{1},n,m),e]+[n_{1},\psi(e,n,m)]\\
=&\psi(n_{1},n,m)e-e\psi(n_{1},n,m)+n_{1}\psi(e,n,m)-\psi(e,n,m)n_{1}.
\end{aligned}\eqno{(3.49)}
$$
Multiplying the left side of (3.49) by $f$ and multiplying the right side of it by $e$ 
gives 
$$
n_1\psi(e,n,m)e=f\psi(e,n,m)n_1, \, \, \, \, \, \forall m\in M, n, n_1\in N.
\eqno{(3.50)}
$$
By rigorous verification and by employing analogous computational
techniques, we see that
$$
\begin{aligned}
&\psi(m_{1},n,m)\\
=&\psi([e,m_{1}],n,m)\\
=&[\psi(e,n,m),m_{1}]+[e,\psi(m_{1},n,m)]\\
=&\psi(e,n,m)m_{1}-m_{1}\psi(e,n,m)+e\psi(m_{1},n,m)-\psi(m_{1},n,m)e\\
\end{aligned}\eqno{(3.51)}
$$
for all $m, m_{1}\in M, n\in N$.
Premultiply (3.51) by $e$ and postmultiply (3.51) by $f$ to produce 
$$
e\psi(e,n,m)em_{1}=m_{1}f\psi(e,n,m)f, \, \, \, \, \, \forall m, m_1\in M, n\in N.
\eqno{(3.52)}
$$
Combining (3.50) with (3.52) together with the definition of the algebraic center $\mathcal{Z(G)}$,
we arrive at
$$
e\psi(e,n,m)e+f\psi(e,n,m)f\in\mathcal{Z(G)}, \, \, \, \, \, \forall m\in M, n\in N.           
\eqno{(3.53)}
$$

Making the best use of relations (3.45), (3.47) and (3.53) together with $\xi(m)=\theta_0m$, we see that
$$
e\psi(e,n,m)e=-\psi(e,e,m)n=-\theta_0 mn\in\mathcal{Z(G)}
$$
and 
$$
f\psi(e,n,m)f=n\psi(e,e,m)=n\theta_0 m=\eta(\theta_0)nm\in\mathcal{Z(G)}.
$$
Let us construct the set $Q=\{\, \theta_0 mn \mid \text{for all}\, m\in M, n\in N\}$,
which is clearly a central ideal of the algebra $A$. By the hypothesis (2) of Proposition \ref{xxsec3.1},
we conclude that $\theta_0 mn=0$ for all $m\in M, n\in N$. Furthermore, by the hypothesis $MN\neq \{0\}$,
it follows that there exist elements $m\in M$ and $n\in N$ such that $mn\neq0$.
Considering the hypothesis (3) of Proposition \ref{xxsec3.1} we assert that $\theta_0=0$.
Consequently, $\psi(e,e,m)=0$ and $\varphi(e,e,n)=0$ for all $m\in M, n\in N$. 
Using the relations (3.41) and (3.42), we conclude that 
$$
\psi(x,y,m)=0\, \, \,  \text{and}\, \, \, \varphi(x,z,n)=0
$$
for all $x, y, z\in A\cup B, y\in A\cup B, m\in M,  n\in N$.

Let us now turn to the case: $MN=\{0\}=NM$. According to the hypothesis (4) in Proposition \ref{xxsec3.1}, 
without loss of generality, we might assume that the algebra $A$ is noncommutative. Then there exist elements 
$a_1, a_2\in A$ such that $[a_1,a_2]\neq 0$.

Let us put $z=m$ in $\psi(x, y, z)$. So $\psi(x, y, m)$ is a $2$-Lie derivation (equivalently, Lie biderivation) with 
respect to the first and the second components. Using Lemma \ref{xxsec3.2} and taking $x=e, y=e, u=a_1, v=a_2$ into (LB) 
gives 
$$
[\psi(e,e,m),[a_{1},a_{2}]]+[[e,a_{2}],\psi(a_{1},e,m)]=[[e,e],\psi(a_{1},a_{2},m)]+[\psi(e,a_{2},m),[a_{1},e]] 
$$
for some $a_1, a_2\in A$ with $[a_1, a_2]\neq 0$ and for all $m\in M$. 
A straightforward computation produces 
$$
[\psi(e,e,m),[a_{1},a_{2}]]=0, \forall m\in M, \, \text{for some} \, a_1, a_2\in A \, \text{with} \, [a_1, a_2]\neq 0, 
$$
which is due to the fact $\psi(x, y, z)$ is a Lie biderivation with respect to the first and second components. 
With the help of $\psi(e,e,m)=\theta_0 m$, we have
$[\theta_0 m, [a_{1},a_{2}]]=0$ for all $m\in M$. This implies that $[a_{1},a_{2}]\theta_0 M=0$.
The faithful property of $(A, B)$-bimodule $M$ results in $[a_{1},a_{2}]\theta_0=0$. 
The assumption (3) of Proposition \ref{xxsec3.1} leads to $\theta_0=0$. That is, $\psi(e,e,m)=\psi(e,e,n)=0$.
Adopting similar computational methods as above, we get  
$\psi(m,e,e)=\psi(n,e,e)=0$ and $\psi(e,m,e)=\psi(e,n,e)=0$ for all $m\in M, n\in N$. 

Basing on the facts $\psi(e,e,m)=\psi(e,e,n)=0$ and utilizing the relations (3.41)-(3.42), we obtain
$$
\psi(x,y,m)=0~~ \text{and}~~\varphi(x,z,n)=0
$$
for all $x, y, z\in A\cup B,y\in A\cup B, m\in M, n\in N$.

\textbf{Claim 3.} With notations as above, we have
$$
\psi(x, y_1, m)=0~~ \text{and}~~\varphi(x, y_2, n)=0
$$
for all $x\in A\cup B, y_1, m \in M$ and $y_2,n\in N$.

In fact, for any $x\in  \left[
\begin{array}
[c]{cc}%
A & O\\
O & B\\
\end{array}
\right], y_1\in M $, we have
$$
\begin{aligned}
0=&\psi([e,x],y_{1},m)\\
=&[\psi(e,y_{1},m),x]+[e,\psi(x,y_{1},m)]\\
=&\psi(e,y_{1},m)x-x\psi(e,y_{1},m)+e\psi(x,y_{1},m)-\psi(x,y_{1},m)e.\\
\end{aligned}\eqno{(3.54)}
$$
According to the fact $\psi(x,m,m^\prime)\in M$ in {\bf Claim 1}, we know that
$$
\psi(x,y_{1},m)=e\psi(x,y_{1},m)f=\left\{
\begin{array}{rcl}
x\psi(e,y_{1},m)f& &{\text{whenever}~ x\in A,}\\
e\psi(e,y_{1},m)x& &{\text{whenever}~ x\in B}
\end{array} \right.             \eqno{(3.55)}
$$
for all $y_{1}, m\in M$. Using an analogous manner, one can establish the validity of
$$
\psi(x,y_{2},n)=f\psi(x,y_{2},n)e=\left\{
\begin{array}{rcl}
f\psi(e,y_{2}, n)x& &{\text{whenever}~ x\in A,}\\
x\psi(e,y_{2}, n)e& &{\text{whenever}~ x\in B}
\end{array} \right.      \eqno{(3.56)}
$$
for all $y_{2}, n\in N$.
In this sense, the proof of this claim is equivalent to verifying $\psi(e,y_1,m^\prime)=0$
and $\psi(e,y_2,n^\prime)=0$ for all $y_1, m^\prime \in M, y_2, n^\prime\in N$.

For any $y_1\in M$ and $y_2\in N$, we define establish the following two mappings 
$$
\begin{aligned}
\tau:M & \longrightarrow M\\
m & \longmapsto \psi(e, y_1, m), \, \, \, m\in M.
\end{aligned}
$$
and 
$$
\begin{aligned}
\sigma:N & \longrightarrow N\\
n & \longmapsto \psi(e, y_2, n), \, \, \, n\in N.
\end{aligned}
$$ 
Using an analogous method of the two mappings $\xi$ and $\zeta$ in {\bf Claim 2}, it is straightforward to check that the 
mapping $\tau:M \longrightarrow M$ is an $(A,B)$-bimodule homomorphism, while the mapping $\sigma:N \longrightarrow N$ is a
$(B,A)$-bimodule homomorphism.

For all $y_{1}\in M, n\in N$, by {\bf Claim 2} of this lemma, we see that
$$
\begin{aligned}
\psi(e,y_{1},n)=&\psi(e,[e,y_{1}],n)\\
=&[\psi(e,e,n),y_{1}]+[e,\psi(e,y_{1},n)]\\
=&e\psi(e,y_{1},n)-\psi(e,y_{1},n)e
\end{aligned}
$$
and
$$
\begin{aligned}
\psi(e,y_{1},n)=&\psi(e,y_{1},[n,e])\\
=&[\psi(e,y_{1},n),e]+[n,\psi(e,y_{1},e)]\\
=&\psi(e,y_{1},n)e-e\psi(e,y_{1},n).
\end{aligned}
$$
Adding the above two equations and using the 2-torsion-free property of
$\mathcal{R}$ results in $\psi(e,y_{1},n)=0$ for all $y_{1}\in M, n\in N$.
Adopting similar computational approaches yields $\psi(e,y_{2},m)=0$ for all
$y_{2}\in N, m\in M$.

Thus, for all $y_{1}, m\in M$ and $n\in N$, on the one hand,
$$
\begin{aligned}
0=&\psi(e,y_{1},mn)-\psi(e,y_{1},nm)\\
=&\psi(e,y_{1},[m,n])\\
=&[\psi(e,y_{1},m),n]+[m,\psi(e,y_{1},n)]\\
=&\psi(e,y_{1},m)n-n\psi(e,y_{1},m).
\end{aligned}
$$
And hence, $\tau(m)n=0$ and $n\tau(m)=0$ for all $m\in M$ and $n\in N$. 
For any $m\in M$, $y_2, n\in N$, on the other hand,
$$
\begin{aligned}
0=&\psi(e,y_{2},mn)-\psi(e,y_{2},nm)\\
=&\psi(e,y_{2},[m,n])\\
=&[\psi(e,y_{2},m),n]+[m,\psi(e,y_{2},n)]\\
=&m\psi(e,y_{2},n)-\psi(e,y_{2},n)m.
\end{aligned}
$$
So $m\sigma(n)=0$ and $\sigma(n)m=0$.
Consequently, we arrive at $m\sigma(n)+\tau(m)n=0$ and $n\tau(m)+\sigma(n)m=0$ for all $m\in M, n\in N$.
In particular, we see that $\tau$ and $\sigma$ are a special pair $(\tau, \sigma)$ of bimodule homomorphisms.
In view of the assumption (1) and (5) in Proposition \ref{xxsec3.1}, there exists $\alpha_{0}\in\pi_{A}(\mathcal{Z}(\mathcal{G}))$ 
such that $\tau(m)=\alpha_{0}m$ and $\sigma(n)=-n\alpha_{0}$.

Suppose first that $MN\neq0$ or $NM\neq0$.
Then, $\tau(m)n=\alpha_{0}mn=0$ for all $m\in M, n\in N$. With the help of assumption $(3)$ in Proposition \ref{xxsec3.1}, 
we get $\alpha_{0}=0$. That is, $\tau(m)=\alpha_{0}m=0$ and $\sigma(n)=-n\alpha_{0}=0$.
We therefore have $\psi(e,y_{1},m)=\psi(e,y_{2},n)=0$.

Let us deal with the case of $MN=0=NM$. Without loss of generality, we might assume that $A$ 
is a noncommutative algebra by invoking the assumption (4)  in Proposition \ref{xxsec3.1}.
Let $a_{1}, a_{2}\in A$ be fixed elements with $[a_{1}, a_{2}]\neq 0$.
Set $z=m$ in $\psi(x, y, z)$. Then $\psi(x, y, m)$ will be a $2$-Lie derivation (equivalently, Lie biderivation) with 
respect to the first and the second components. Using Lemma \ref{xxsec3.2} and taking $x=e, y=y_{1}, u=a_{1}, v=a_{2}$ into (LB) yields 
$$
[\psi(e,y_{1},m),[a_{1},a_{2}]]+[[e,a_{2}],\psi(a_{1},y_{1},m)]=[[e,y_{1}],\psi(a_{1},a_{2},m)]+[\psi(e,a_{2},m),[a_{1},y_{1}]].
$$
As in the end of the proof of {\bf Claim 2}, we observe that
$[\psi(e,y_{1},m),[a_{1},a_{2}]]=0$ for all $y_{1},m\in M$ and some
$a_{1}, a_{2}\in A$ with $[a_{1}, a_{2}]\neq 0$.
By the fact $\psi(e,y_{1},m)=\alpha_{0}m$ it follows that
$[\alpha_{0}m,[a_{1},a_{2}]]=0$.
A straightforward computation gives rise to $\alpha_{0}[a_{1},a_{2}]M=0$ for all $m\in M$ and
some $a_{1}, a_{2}\in A$ with $[a_{1}, a_{2}]\neq 0$. The faithfulness of the left $A$-module $M$
implies that $\alpha_{0}[a_{1},a_{2}]=0$. By the assumption (3) of Proposition \ref{xxsec3.1}, we assert that $\alpha_0=0$.
Thus $\tau(m)=\alpha_0 m=0$ and $\sigma(n)=-n \alpha_0=0$.
We therefore have $\psi(e,y_{1},m)=\psi(e,y_{2},n)=0$ for all $y_1, m \in M$ and $y_2, n\in N$.
In light of the equations (3.55) and (3.56), we obtain $\psi(x, y_1, m)=\psi(x, y_2, n)=0$
for all $x\in A\cup B, y_{1}, m\in M$ and $n, y_{2}\in N$.

Combining {\bf Claim 2} with {\bf Claim 3} of this Lemma, we conclude that $\psi(x, y_1, m)=\psi(x,y_2, n)=0$ 
for all $x\in A\cup B, y_1\in A\cup B\cup M, y_2\in A\cup B \cup N$ and $m\in M, n\in N$.

Employing similar computational techniques, we can reach the other conclusions of this lemma.

\end{proof}

\begin{tcolorbox}[breakable, enhanced, blanker, left=3mm,right=3mm,
borderline west={2pt}{0pt}{green}]

\begin{lemma}\label{xxsec3.12}
With notations as above, we have
$$
\psi(x, y, z)\in  \mathcal{Z(G)} ~\text{for~all}~ x, y, z\in A\cup B.
$$
\end{lemma}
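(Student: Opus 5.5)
By Lemma \ref{xxsec3.10}, $\psi(x,y,z)$ is already block-diagonal for all $x,y,z\in A\cup B$. So it remains to check the two commutation conditions in the description of $\mathcal{Z}(\mathcal{G})$:
$$e\psi(x,y,z)e\, m=m\, f\psi(x,y,z)f \quad\text{and}\quad n\, e\psi(x,y,z)e=f\psi(x,y,z)f\, n$$
for all $m\in M$ and $n\in N$.

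The plan is to use the Lie derivation property in the third component with a commutator that produces $m$ or $n$. Fix $x,y,z\in A\cup B$ and $m\in M$. I would distinguish $z\in A$ from $z\in B$ only when it matters; in fact a uniform route exists. Write $m=[e,m]$ and expand
$$\psi(x,y,[z,m]) \quad\text{or}\quad \psi(x,y,[m,z]),$$
whichever bracket is convenient. Lemma \ref{xxsec3.11}(i) gives $\psi(x,y,m)=0$ and $\psi(x,y,zm)=0$, since $zm\in M$ or $mz\in M$ (if the product is zero, Claim 1 of Lemma \ref{xxsec3.8} handles it trivially). Hence
$$0=\psi(x,y,[z,m])=[\psi(x,y,z),m]+[z,\psi(x,y,m)]=[\psi(x,y,z),m].$$
Because $\psi(x,y,z)$ is diagonal, the $(1,2)$-entry of this equation reads $e\psi(x,y,z)e\,m-m\,f\psi(x,y,z)f=0$.

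This requires $[z,m]$ to lie in $M$ and $\psi(x,y,\cdot)$ to vanish there. That holds: $[z,m]=zm$ if $z\in A$, and $[z,m]=-mz$ if $z\in B$. A cleaner alternative avoids the dependence on $z$. Use
$$0=\psi(x,y,[z,m])-[z,\psi(x,y,m)]=[\psi(x,y,z),m],$$
where the left-hand side vanishes directly by Lemma \ref{xxsec3.11}(i) applied to both $\psi(x,y,[z,m])$ and $\psi(x,y,m)$. Either way this yields $[\psi(x,y,z),m]=0$.

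The same argument with $n\in N$, using Lemma \ref{xxsec3.11}(ii), gives $[\psi(x,y,z),n]=0$, that is, $n\,e\psi(x,y,z)e=f\psi(x,y,z)f\,n$.

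Finally, $\psi(x,y,z)$ commutes with $A$ and with $B$ because its diagonal entries are central in $A$ and $B$ respectively. To see this, use $[a,w]=0$ for $a\in A$, $w\in B$, as in the last part of the proof of Lemma \ref{xxsec3.10}, but now in the third component:
$$0=\psi(x,y,[z,a])-[z,\psi(x,y,a)]$$
is not directly what we want. Better: an element that is diagonal and commutes with all of $M$ and $N$ is central, by the displayed description of $\mathcal{Z}(\mathcal{G})$, which only requires the two conditions above. So no further work is needed.

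The main obstacle I expect is bookkeeping: verifying that Lemma \ref{xxsec3.11} genuinely covers $\psi(x,y,m)=0$ for every ordering needed, with $x,y\in A\cup B$. It does, by part (i) with $y_1\in A\cup B$. Everything else is immediate.
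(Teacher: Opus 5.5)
Your proof is correct and follows essentially the paper's route. The paper likewise takes block-diagonality from Lemma \ref{xxsec3.10}, applies the Lie-derivation identity to a commutator of a diagonal element with $m$ or $n$ (namely $\psi([a,m],y,z)$ and $\psi([n,a],y,z)$), kills the extra terms with Lemma \ref{xxsec3.11}, and reads off centrality from the displayed description of $\mathcal{Z}(\mathcal{G})$; that description already encodes $a\in\mathcal{Z}(A)$ and $b\in\mathcal{Z}(B)$ through the faithfulness of $M$.

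The only difference is that you expand in the third slot rather than the first, so you use exactly the case $\psi(x,y_1,m)=0$ that Lemma \ref{xxsec3.11} proves in detail. You can drop the side remarks: the citation of Lemma \ref{xxsec3.8} for a zero product is unnecessary, since $\psi(x,y,0)=0$ by linearity, and the abandoned attempt to show commutation with $A$ and $B$ is not needed.
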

\end{tcolorbox}

\begin{proof}
On the authority of  Lemma \ref{xxsec3.10}, let us first prove that $\psi(x,y,z)\in\mathcal{Z(G)}$ 
for all $x\in A$ and $y,z\in A \cup B $.

For all $a\in A, y,z\in A \cup B$ and $m\in M, n\in N$, with the help of Lemma \ref{xxsec3.11},
we obtain
$$
\begin{aligned}
0=&\psi(am,y,z)=\psi([a,m],y,z)\\
=&[\psi(a,y,z),m]+[a,\psi(m,y,z)]=[\psi(a,y,z),m]\\
=&\psi(a,y,z)m-m\psi(a,y,z)
\end{aligned}
$$
and
$$
\begin{aligned}
0=&\psi(na,y,z)=\psi([n,a],y,z)\\
=&[\psi(n,y,z),a]+[n,\psi(a,y,z)]=[n,\psi(a,y,z)]\\
=&n\psi(a,y,z)-\psi(a,y,z)n.
\end{aligned}
$$
Combining the above two equations yields
$$
\left[
\begin{array}
[c]{cc}%
\psi(a,y,z) & 0\\
0 & \psi(a,y,z)\
\end{array}
\right]\in \mathcal{Z(G)}        
\eqno{(3.57)}
$$
for all $a\in A, y,z \in A\cup B$. Similarly we also have 
$$
\left[
\begin{array}
[c]{cc}%
\psi(b,y,z) & 0 \\
0 & \psi(b,y,z) \\
\end{array}
\right]
\in \mathcal{Z(G)}
\eqno{(3.58)}
$$
for all $b\in B, y,z \in A\cup B$.
Considering the relations $(3.57)$ and $(3.58)$, we know that this lemma holds true.


\end{proof}

\begin{tcolorbox}[breakable, enhanced, blanker, left=3mm,right=3mm,
borderline west={2pt}{0pt}{green}]

\begin{lemma}\label{xxsec3.13}
With notations as above, we have
\begin{enumerate}
  \item [{\rm (i)}]  $\psi(x,n,m)=\psi(n,x,m)=\psi(x,m,n)=0;$
  \item [{\rm (ii)}] $\psi(x,m,n)=\psi(m,x,n)=\psi(x,n,m)=0;$
\end{enumerate}
 for all $x\in A\cup B$, $m\in M$, and $n\in N$.
\end{lemma}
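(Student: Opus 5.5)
The plan is to establish first that every value in the lemma is block-diagonal and central, and then to kill that central value using hypotheses (2) and (3) of Proposition \ref{xxsec3.1}.

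First I will determine where these values live. For $x\in A\cup B$, $m\in M$, $n\in N$, expand $\psi(x,n,[e,m])=\psi(x,n,m)$ by the Lie derivation property in the third component. Lemma \ref{xxsec3.11} gives $\psi(x,n,e)=0$, so
$$
\psi(x,n,m)=e\psi(x,n,m)-\psi(x,n,m)e .
$$
Now expand $\psi(x,[n,e],m)=\psi(x,n,m)$ in the second component. Lemma \ref{xxsec3.11} gives $\psi(x,e,m)=0$, so
$$
\psi(x,n,m)=\psi(x,n,m)e-e\psi(x,n,m).
$$
Adding the two identities and using that $\mathcal{R}$ is $2$-torsion free yields $\psi(x,n,m)\in A\oplus B$. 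This is the same device used in Claim 3 of Lemma \ref{xxsec3.11}. The same manoeuvre works for $\psi(n,x,m)$, $\psi(x,m,n)$, $\psi(m,x,n)$ and the rest of (i) and (ii).

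Next I will show these block-diagonal values are central. Take $n_1\in N$ and expand $\psi(x,n,[n_1,e])$. Alternatively, take $m_1\in M$ and expand $\psi(x,n,[e,m_1])$ in the third component, now with $m$ replaced by generic elements. A cleaner route is to act on the first slot with $[e,\cdot]$ and $[\cdot,e]$ as in (3.49)--(3.52), combined with Lemma \ref{xxsec3.11} killing terms such as $\psi(e,n,m_1)$ paired with $M$ and $N$. Peirce projections then give
$$
n_1\,e\psi(x,n,m)e=f\psi(x,n,m)f\,n_1,\qquad e\psi(x,n,m)e\,m_1=m_1\,f\psi(x,n,m)f .
$$
Hence $\psi(x,n,m)\in\mathcal{Z}(\mathcal{G})$, exactly as in (3.53).

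Then I will make the value explicit. Using $\psi(x,n,[e,m])$ again and projecting by $e\cdot e$, I expect a formula of the form
$$
e\psi(x,n,m)e=\pm\,\psi(x,n,e)m+\cdots .
$$
This route needs care, since $\psi(x,n,e)=0$ already. The better approach is to use $mn=[m,n]+nm$ style identities. By Lemma \ref{xxsec3.11}, $\psi(x,y,m)=0$ for $y\in A\cup M\cup B$. Therefore $\psi(x,[a,m'],m)=0$ for every $a$, and $\psi(x,[m',n],m)=\psi(x,m'n-nm',m)$ lies in $A\oplus B$ by Lemma \ref{xxsec3.12}.

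The key relation I aim for is obtained by taking $a\in A$ and expanding
$$
0=\psi(x,[n,a],m)-\psi(x,na,m).
$$
From this I want to show that $\{e\psi(x,n,m)e\}$ is stable under multiplication by $A$, namely $e\psi(x,na,m)e=e\psi(x,n,m)e\,a$. By the bimodule-type computation of $\xi$ in Lemma \ref{xxsec3.11}, this makes the set of values a central ideal of $A$. By hypothesis (2), assuming $A$ has no nonzero central ideals, the $A$-component vanishes. Applying $\eta$ then kills the $B$-component, and the case where only $B$ has no nonzero central ideals is symmetric. Finally, repeat the argument for each permutation of the arguments.

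I expect the main obstacle to be this central-ideal step. It requires showing that $a\mapsto e\psi(x,na,m)e$ behaves like right multiplication, which needs $\psi(x,n,[a,m])$-type terms to vanish, and they do by Lemma \ref{xxsec3.11}. If that fails, I would instead use the special-pair hypothesis (5) together with hypothesis (3), as in Claim 3 of Lemma \ref{xxsec3.11}. In that approach, fix $n$ and view $m\mapsto\psi(e,n,m)$ paired with the analogous map on $N$. This produces a special pair $\alpha_0 m$, and the argument concludes $\alpha_0=0$ by splitting into the cases $MN\neq0$ and $MN=0=NM$, using Lemma \ref{xxsec3.2} in the latter.
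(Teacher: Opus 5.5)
\textbf{There is a gap, but it is self-inflicted: your first paragraph already contains the paper's entire proof, and you draw the wrong conclusion from it.}

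The two expansions you write are
\begin{itemize}
\item[] $\psi(x,n,m)=e\psi(x,n,m)-\psi(x,n,m)e$, from the third slot, using $\psi(x,n,e)=0$;
\item[] $\psi(x,n,m)=\psi(x,n,m)e-e\psi(x,n,m)$, from the second slot, using $\psi(x,e,m)=0$.
\end{itemize}
Their right-hand sides are negatives of each other. Adding them gives $2\psi(x,n,m)=0$, hence $\psi(x,n,m)=0$ because $\mathcal{R}$ is $2$-torsion free. It does not merely give $\psi(x,n,m)\in A\oplus B$. Put differently, the first identity alone forces the value into $M$ and the second forces it into $N$, again by $2$-torsion-freeness.

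The same pair of expansions handles every other arrangement in (i) and (ii). Use $[n,e]=n$ in the slot holding $n$ and $[e,m]=m$ in the slot holding $m$, and let Lemma \ref{xxsec3.11} kill the cross terms. This is exactly how the paper argues.

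Because of that misreading, everything after your first paragraph is unnecessary, and as written it would not stand as a proof.
\begin{itemize}
\item[] The centrality relations are only gestured at, not derived.
\item[] The central-ideal step is set up incorrectly. Since $an=0$ we have $[n,a]=na$, and with $\psi(x,a,m)=0$ this gives $\psi(x,na,m)=[\psi(x,n,m),a]$. So $e\psi(x,na,m)e=e\psi(x,n,m)e\,a-a\,e\psi(x,n,m)e$ is a commutator, not the right multiplication you need for an ideal. Taking $a=e$ just returns your second identity.
\item[] The fallback through special pairs and Lemma \ref{xxsec3.2} is likewise only sketched.
\end{itemize}
You need no use of hypotheses (2), (3) or (5) beyond what already went into Lemma \ref{xxsec3.11}. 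Replace the rest of the proposal with the observation that adding your two identities gives $2\psi(x,n,m)=0$.
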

\end{tcolorbox}

\begin{proof}
For all $x\in A\cup B, m\in M, n\in N$, it follows from Lemma \ref{xxsec3.11} that
$$
\begin{aligned}
\psi(x,n,m)=&\psi(x,[n,e],m)\\
=&[\psi(x, n, m),e]+[n,\psi(x, e, m)]\\
=&[\psi(x, n, m),e]\\
\end{aligned}
$$
and that
$$
\begin{aligned}
\psi(x,n,m)=&\psi(x,n,[e,m])\\
=&[\psi(x, n, e),m]+[e,\psi(x, n, m)]\\
=&[e,\psi(x, n, m)].
\end{aligned}
$$
Using the above two equations and considering the fact that $\mathcal{R}$ is $2$-torsionfree,
we see that 
$$
\psi(x,n,m)=0
$$ 
for all $x\in A\cup B, m\in M, n\in N$.

Employing analogous computational techniques and using Lemma \ref{xxsec3.11},
we establish the validity of the remaining cases of this lemma.
\end{proof}

\begin{tcolorbox}[breakable, enhanced, blanker, left=3mm,right=3mm,
borderline west={2pt}{0pt}{green}]

\begin{lemma}\label{xxsec3.14}
With notations as above,  we have
\begin{enumerate}
  \item [{\rm (i)}]  $\psi(m_{1},n,m_{2})=\psi(n,m_{1},m_{2})=\psi(m_{1},m_{2},n)=\psi(m_{2},n,m_{1})=\psi(n,m_{2},m_{1})=\psi(m_{2},m_{1},n)=0;$
  \item [{\rm (ii)}] $\psi(n_{1},m,n_{2})=\psi(m,n_{1},n_{2})=\psi(n_{1},n_{2},m)=\psi(n_{2},m,n_{1})=\psi(m,n_{2},n_{1})=\psi(n_{2},n_{1},m)=0;$
\end{enumerate}
for all  $m_{1}, m_{2}\in M$, and $n_{1}, n_{2}\in N$. 
\end{lemma}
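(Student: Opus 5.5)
We follow the same pattern as the proof of Lemma \ref{xxsec3.13}: express $\psi$ of the relevant triple in two ways, using the idempotent $e$ in two different slots, and combine the two identities with the $2$-torsionfreeness of $\mathcal{R}$. The inputs are Lemma \ref{xxsec3.11} and Lemma \ref{xxsec3.13}. Together they say that $\psi$ vanishes whenever one argument lies in $A\cup B$ and the other two lie in $M\cup N$ (or in $A\cup B$ together with one of $M,N$). In particular $\psi(e,\cdot,\cdot)$ is zero on all pairs drawn from $M\cup N$.

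For (i), take first $\psi(m_1,n,m_2)$. Writing $m_1=[e,m_1]$ in the first slot gives
$$\psi(m_1,n,m_2)=[\psi(e,n,m_2),m_1]+[e,\psi(m_1,n,m_2)]=[e,\psi(m_1,n,m_2)].$$
Here $\psi(e,n,m_2)=0$ by Lemma \ref{xxsec3.13}. Writing $n=[n,e]$ in the second slot gives
$$\psi(m_1,n,m_2)=[\psi(m_1,n,m_2),e]+[n,\psi(m_1,e,m_2)]=[\psi(m_1,n,m_2),e],$$
since $\psi(m_1,e,m_2)=0$ by Lemma \ref{xxsec3.11}(i). Adding the two identities yields $2\psi(m_1,n,m_2)=0$, so $\psi(m_1,n,m_2)=0$ by $2$-torsionfreeness. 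Alternatively, one can read off the Peirce components: the first identity kills the $eGe$, $fGf$ and $fGe$ parts, and the second kills the $eGf$ part.

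The remaining five orderings in (i) are handled the same way. We choose one slot holding an element of $M$, written as $[e,m]$, and one slot holding $n$, written as $[n,e]$. Each time, the extra terms are of the form $\psi$ with $e$ in one slot and two elements of $M\cup N$ in the others, and these vanish by Lemma \ref{xxsec3.11} or Lemma \ref{xxsec3.13}. Part (ii) is symmetric: we expand one $n_i=[n_i,e]$ and $m=[e,m]$.

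The main obstacle is bookkeeping rather than ideas. For each ordering we must check that the cited lemmas really cover the auxiliary terms that appear, for instance $\psi(e,m_1,m_2)$ or $\psi(m_1,m_2,e)$ when the $e$ lands in a slot adjacent to two elements of $M$. Lemma \ref{xxsec3.11}(i), with $x=e$ and $y_1\in M$, covers all permutations of $(e,m_1,m_2)$, and likewise (ii) covers all permutations of $(e,n_1,n_2)$. So every auxiliary term is zero, and the argument closes in each case.
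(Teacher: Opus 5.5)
Your proposal is correct and is essentially the paper's proof. The paper handles $\psi(m_1,m_2,n)$ by writing $m_1=[e,m_1]$ in the first slot and $n=[n,e]$ in the third, kills the auxiliary terms $\psi(e,m_2,n)$ and $\psi(m_1,m_2,e)$ via Lemmas~\ref{xxsec3.13} and \ref{xxsec3.11}, and adds the two identities using $2$-torsionfreeness, exactly as you do for $\psi(m_1,n,m_2)$.

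One bookkeeping caution: Lemma~\ref{xxsec3.13} as stated does not list $\psi(m,n,x)$ or $\psi(n,m,x)$, so your blanket claim that $\psi$ vanishes whenever one argument is in $A\cup B$ and the other two are in $M\cup N$ slightly overstates what it literally gives. In the other orderings, choose slots so that $e$ never lands in the third position beside one element of $M$ and one of $N$ (e.g.\ for $\psi(n,m_1,m_2)$ expand $m_1$ rather than $m_2$), or verify those two vanishings by the same trick.
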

\end{tcolorbox}

\begin{proof}
In light of Lemma \ref{xxsec3.13}, we see that
$$
\begin{aligned}
 \psi(m_{1},m_{2},n)=&\psi([e,m_{1}],m_{2},n)\\
=&[\psi(e,m_{2},n),m_{1}]+[e,\psi(m_{1},m_{2},n)]\\
=&[e,\psi(m_{1},m_{2},n)]\\
\end{aligned}
$$
and that
$$
\begin{aligned}
\psi(m_{1},m_{2},n)=&\psi(m_{1},m_{2},[n,e])\\
=&[\psi(m_{1},m_{2},n),e]+[n,\psi(m_{1},m_{2},e)]\\
=&[\psi(m_{1},m_{2},n),e].\\
\end{aligned}
$$
for all $m_{1}, m_{2}\in M~\text{and}~ n_{1}, n_{2}\in N$. 
Adding the above two equations together with $\mathcal{R}$ being $2$-torsionfree 
yields 
$$
\psi(m_{1},m_{2},n)=0
$$
for all $m_{1}, m_{2}\in M~\text{and}~ n\in N$.

Following an analogous approach and incorporating Lemma \ref{xxsec3.11},
one can achieve the remaining cases of the lemma.

\end{proof}

\begin{tcolorbox}[breakable, enhanced, blanker, left=3mm,right=3mm,
borderline west={2pt}{0pt}{green}]

\begin{lemma}\label{xxsec3.15}
With notations as above, we have
\begin{enumerate}
  \item [{\rm (i)}]  $\psi(m,m_{1},m_{2})=0;$
  \item [{\rm (ii)}] $\psi(n,n_{1},n_{2})=0;$
\end{enumerate}
for all  $m, m_{1}, m_{2}\in M$, and $n, n_{1}, n_{2}\in N$. 
\end{lemma}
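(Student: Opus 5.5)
We follow the same pattern as in Lemmas \ref{xxsec3.13} and \ref{xxsec3.14}: write an argument as a commutator with $e$ in two different slots and use the vanishing results already obtained. We prove (i); (ii) is symmetric, with the roles of $e$ and $f$ (equivalently of $M$ and $N$) interchanged.

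\textbf{Step 1: locate $\psi(m,m_1,m_2)$ in the Peirce decomposition.} Write $m=[e,m]$ in the first component:
$$
\psi(m,m_1,m_2)=\psi([e,m],m_1,m_2)=[\psi(e,m_1,m_2),m]+[e,\psi(m,m_1,m_2)].
$$
By Lemma \ref{xxsec3.11}(i), applied with $x=e\in A$ and $y_1=m_1\in M$, we have $\psi(e,m_1,m_2)=0$. Hence
$$
\psi(m,m_1,m_2)=e\psi(m,m_1,m_2)-\psi(m,m_1,m_2)e .
$$
Multiplying on both sides by $e$, and on both sides by $f$, gives $e\psi(m,m_1,m_2)e=0$ and $f\psi(m,m_1,m_2)f=0$. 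Multiplying on the left by $f$ and on the right by $e$ gives $f\psi(m,m_1,m_2)e=-f\psi(m,m_1,m_2)e$. Since $\mathcal{R}$ is $2$-torsionfree, $f\psi(m,m_1,m_2)e=0$. Therefore $\psi(m,m_1,m_2)=e\psi(m,m_1,m_2)f\in M$.

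\textbf{Step 2: kill the $M$-component.} Step 1 alone does not do this. The trick of Lemmas \ref{xxsec3.13}--\ref{xxsec3.14}, namely writing the same argument as $[\,\cdot\,,e]$ in another slot, produces the identity $X=eX-Xe$ again, which is consistent with $X\in M$ and gives no contradiction. So we use the special-pair mechanism from {\bf Claim 3} of Lemma \ref{xxsec3.11}.

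Fix $m,m_1$ and define $\tau(m_2)=\psi(m,m_1,m_2)$. We check that $\tau:M\to M$ is an $(A,B)$-bimodule homomorphism. For $a\in A$, write $am_2=[a,m_2]$; the terms $[\psi(m,m_1,a),m_2]$ vanish because $\psi(m,m_1,a)=0$ by Lemma \ref{xxsec3.11}(i). The right $B$-action is handled the same way.

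For the partner map, fix $n_0\in N$ with $n_0\neq 0$ if one exists, and set $\sigma(n)=\psi(m,m_1,n)$. Lemma \ref{xxsec3.14} (together with its permuted variants) gives $\psi(m,m_1,n)=0$, so $\sigma=0$.

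Next expand
$$
\psi(m,m_1,[m_2,n])=[\tau(m_2),n]+[m_2,\sigma(n)].
$$
Here $[m_2,n]=m_2n-nm_2\in A+B$. By Lemma \ref{xxsec3.11}(i), $\psi(m,m_1,z)=0$ for every $z\in A\cup B$, so the left side is $0$. Since $\sigma=0$, this gives $\tau(m_2)n=0=n\tau(m_2)$. Thus $(\tau,0)$ is a special pair, and by hypotheses (1) and (5) we get $\tau(m_2)=\alpha m_2$ with $\alpha\in\mathcal{Z}(A)$ and $-n\alpha=0$ for all $n\in N$.

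\textbf{Step 3: show $\alpha=0$.} If $MN\neq 0$, then $\alpha MN=\tau(M)N=0$, and hypothesis (3) gives $\alpha=0$, exactly as in {\bf Claim 3} of Lemma \ref{xxsec3.11}.

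If $MN=0=NM$, assume by hypothesis (4) that $A$ is noncommutative, and choose $a_1,a_2\in A$ with $[a_1,a_2]\neq 0$. Apply (LB) from Lemma \ref{xxsec3.2} to the Lie biderivation $(x,y)\mapsto\psi(x,y,m_2)$, taking $x=m$, $y=m_1$, $u=a_1$, $v=a_2$. Every term other than $[\psi(m,m_1,m_2),[a_2,a_1]]$ contains a value $\psi$ with at least one argument in $A$ and one in $M$, and all such values vanish by Lemma \ref{xxsec3.11}. So $[\alpha m_2,[a_1,a_2]]=0$, which means $[a_1,a_2]\alpha M=0$. Faithfulness of $M$ gives $[a_1,a_2]\alpha=0$, and hypothesis (3) gives $\alpha=0$.

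\textbf{Expected main obstacle.} The hardest point is checking that all the auxiliary terms in (LB) and in the bimodule-homomorphism verification really vanish by Lemmas \ref{xxsec3.11}--\ref{xxsec3.14}. In particular we need the permuted-slot versions, such as $\psi(m,a,m_2)=0$ and $\psi(a,m_1,m_2)=0$. The paper states these only up to ``analogous'' arguments, and each must be matched to the right case.
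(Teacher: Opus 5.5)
Your proof is correct and essentially the paper's: both place $\psi(m,m_1,m_2)$ in $M$ via a Peirce/$2$-torsion argument, build an $(A,B)$-bimodule homomorphism that kills $N$ on both sides by Lemma~\ref{xxsec3.14}, invoke the standard form of a special pair, and split into $MN\neq 0$ versus $MN=0=NM$ using the same substitution $x=m$, $y=m_1$, $u=a_1$, $v=a_2$ in (LB). The differences are cosmetic (you vary the third slot and pair $\tau$ with the zero map, whereas the paper varies the first slot and pairs $\jmath$ with the $N$-valued map $\ell$, so one $\gamma$ settles (i) and (ii) together), with one caveat: (ii) is not literally the mirror of (i), since only $M$ is assumed faithful and a mirrored Step~3 would need faithfulness of $N$, but your mirrored pair $(0,\sigma')$ with $\sigma'(n_2)=\psi(n,n_1,n_2)$ has standard form with $(\omega_0+\eta^{-1}(\omega_1))M=0$, so faithfulness of $M$ gives $\sigma'=0$ at once (and since $M(nm')=(Mn)m'$, faithfulness of $M$ as a right $B$-module shows $MN=0$ forces $NM=0$, so your two cases in Step~3 are exhaustive).
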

\end{tcolorbox}

\begin{proof}
By invoking Lemmas \ref{xxsec3.11} and \ref{xxsec3.13}, we arrive at
$$
\begin{aligned}
\psi(m,m_{1},m_{2})=&\psi(m,[e,m_{1}],m_{2})\\
=&[\psi(m_{1},e,m_{2}),m_{1}]+[e,\psi(m,m_{1},m_{2})]\\
=&e\psi(m,m_{1},m_{2})-\psi(m,m_{1},m_{2})e\\
\end{aligned}
$$
and
$$
\begin{aligned}
\psi(n,n_{1},n_{2})=&\psi(n,[n_{1},e],n_{2})\\
=&[\psi(n,n_{1},n_{2}),e]+[n_{1},\psi(n,e,m_{2})]\\
=&\psi(n,n_{1},n_{2})e-e\psi(n,n_{1},n_{2}).
\end{aligned}
$$
for all  $m, m_{1}, m_{2}\in  M$ and $n, n_{1}, n_{2}\in N$. We therefore conclude 
$$
\psi(m,m_{1},m_{2})=e\psi(m,m_{1},m_{2})f\in M~ \text{and}~\psi(n,n_{1},n_{2})=f\psi(n,n_{1},n_{2})e\in N.
$$

To establish this result, let us choose two fixed elements $m_{1}, m_{2}\in M$. Then one can define two mappings 
$$
\begin{aligned}
\jmath: M & \longrightarrow  M\\
m & \longmapsto e\psi(m,m_{1},m_{2})f, \, \, \, \, \forall m\in M
\end{aligned}
$$ 
and 
$$
\begin{aligned}
\ell:N & \longrightarrow N\\
n & \longmapsto f\psi(n,n_{1},n_{2})e, \, \, \, \, \forall n\in N
\end{aligned}
$$ 
via 
$\jmath(m)=e\psi(m,m_{1},m_{2})f\in M$ and $\ell(n)=f\psi(n,n_{1},n_{2})e\in N$ respectively.


For all $a\in A, b\in B$ and $m\in M, n\in N$, it follows from Lemma \ref{xxsec3.11} that
$$
\begin{aligned}
\jmath(amb)=&e\psi(amb,m_{1},m_{2})f=e\psi([a,mb],m_{1},m_{2})f\\
=&e([\psi(a,m_{1},m_{2}),mb)+[a,\psi(mb,m_{1},m_{2})])f\\
=&ea\psi(mb,m_{1},m_{2})f=ea\psi([m,b],m_{1},m_{2})f\\
=&ea([\psi(m,m_{1},m_{2}),b]+[m,\psi(b,m_{1},m_{2})])f\\
=&ea([\psi(m,m_{1},m_{2}),b]f\\
=&a\psi(m,m_{1},m_{2})b=a\jmath(m)b
\end{aligned}
$$
and
$$
\begin{aligned}
\ell(bna)&=f\psi(bna,n_{1},n_{2})e=f\psi([b,na],n_{1},n_{2})e\\
&=f([b,\psi(na,n_{1},n_{2})]+[\psi(b,n_{1},n_{2}),na])e\\
&=fb\psi(na,n_{1},n_{2})e=fb\psi([n,a],n_{1},n_{2})e\\
&=fb([\psi(n,n_{1},n_{2}),a]+[n,\psi(a,n_{1},n_{2})])e\\
&=fb\psi(n,n_{1},n_{2})a.
\end{aligned}
$$
Thees imply that mapping
$\jmath: M\longrightarrow  M$ constitutes an
$(A,B)$-bimodule homomorphism, while the mapping $\ell:N\longrightarrow N$
is a $(B,A)$-bimodule homomorphism.

For all $m, m_1, m_2\in M$ and $n\in N$, in the light of Lemma \ref{xxsec3.14}, we obtain
$$
\begin{aligned}
0=&\psi(mn,m_{1},m_{2})-\psi(nm,m_{1},m_{2})\\
=&\psi([m,n],m_{1},m_{2})\\
=&[\psi(m,m_{1},m_{2}),n]+[m,\psi(n,m_{1},m_{2})]\\
=&[\jmath(m),n]\\
=&\jmath(m)n-n\jmath(m).
\end{aligned}
$$
Consequently, we get 
$$
\jmath(m)n=0~ \text{and}~ n\jmath(m)=0, \, \, \, \, \, \forall m\in M, n\in N. 
\eqno{(3.59)}
$$
Using an analogous computational approach together with Lemma \ref{xxsec3.14} gives 
$$
\begin{aligned}
0=&\psi(mn,n_{1},n_{2})-\psi(nm,n_{1},n_{2})\\
=&\psi([m,n],n_{1},n_{2})\\
=&[\psi(m,n_{1},n_{2}),n]+[m,\psi(n,n_{1},n_{2})]\\
=&[m,\ell(n)]\\
=&m\ell(n)-\ell(n)m
\end{aligned}
$$
for all $n, n_1, n_2\in N$ and $m\in M$. Hence
$$
m\ell(n)=0~\text{and}~\ell(n)m=0,   \, \, \, \, \, \forall m\in M, n\in N.      
\eqno{(3.60)}
$$ 
Basing on the two equations $\ell(n)m+n\jmath(m)=0$ and $m\ell(n)+\jmath(m)n=0$ and 
considering the bimodule homomorphism properties of mappings
$\jmath: M\longrightarrow  M$ and $\ell:N\longrightarrow N$,
we conclude that the mappings $\jmath: M\longrightarrow  M$
and $\ell:N\longrightarrow N$ form a special pair $(\jmath,\ell)$ of bimodule homomorphisms.
Consequently, there exist elements $\vartheta_0\in \mathcal{Z}(A)$ and $\vartheta_1\in \mathcal{Z}(B)$ such that
$\jmath(m)=\vartheta_0 m+m\vartheta_1$ and $\ell(n)=-n\vartheta_0 -\vartheta_1n$ for all $m\in M, n\in N$.
Furthermore, by virtue of the algebraic isomorphism
$\eta:\pi_{A}(\mathcal{Z}(\mathcal{G}))\longrightarrow \pi_{B}(\mathcal{Z}(\mathcal{G}))$,
we know that $\jmath(m)=\gamma  m$ and $\ell(n)=-n\gamma$ for all $m\in M, n\in N$,
where $\gamma=\vartheta_0+\eta(\vartheta_1)$.

Let us next show that $\gamma=0$, from which the relations $\psi(m,m_{1},m_{2})=e\psi(m,m_{1},m_{2})f=0$
and $\psi(n,n_{1},n_{2})=f\psi(n,n_{1},n_{2})e$ will follow.
To round off, let us divide the remaining discussion into two cases:
$MN\neq0$ or $NM\neq0$, and $MN=\{0\}=NM$.

Let us first deal with the case $MN\neq0$ or $NM\neq0$.
Taking into account (3.59), we get
$\gamma mn=\jmath(m)n=0=n\jmath(m)=n\gamma m=\eta(\gamma)nm$ for all $m\in M, n\in N$. 
In the case of $MN\neq 0$. Then there exist elements $m^\prime \in M, n^\prime \in N$ such that $m^\prime n^\prime \neq 0$. 
This results in the fact $\gamma m^\prime n^\prime =0$.
By the hypothesis (3) of Proposition \ref{xxsec3.1}, we conclude that $\gamma=0$. 
We therefore say that $\jmath(m)=\gamma m=0$ and $\ell(n)=-n\gamma=0$ for all $m\in M, n\in N$.
In this case, we eventually arrive at $\psi(m, m_{1}, m_{2})=\psi(n, n_{1}, n_{2})=0$
for all $m, m_{1}, m_{2}\in M$ and $n, n_{1}, n_{2}\in N$. Whenever $NM\neq 0$. 
Then there exist elements $n^{\prime\prime} \in N, m^{\prime\prime} \in M$ such that $n^{\prime\prime}, m^{\prime\prime} \neq 0$. 
Consequently, we see that $0=n^{\prime\prime} \jmath(m^{\prime\prime})=n^{\prime\prime}\gamma m^{\prime\prime}=\eta(\gamma)n^{\prime\prime}m^{\prime\prime}$. 
It follows from the hypothesis (3) of Proposition \ref{xxsec3.1} that $\eta(\gamma)=0$. 
And hence $\gamma=0$. This indicates that $\psi(m, m_{1}, m_{2})=\psi(n, n_{1}, n_{2})=0$ 
for all $m, m_{1}, m_{2}\in M$ and $n, n_{1}, n_{2}\in N$.

Our attention now turn to another case of $NM=0=NM$.
According to the hypothesis (4) of Proposition \ref{xxsec3.1}, and without loss of generality,
we may assume that the algebra $A$ is noncommutative. Consequently,
there exist elements $a_1, a_2\in A$ satisfying the relation $[a_1, a_2]\neq0$.
Set $z=m_2$ in $\psi(x, y, z)$. Then $\psi(x, y, m_2)$ will be a $2$-Lie derivation (equivalently, Lie biderivation) with 
respect to the first and the second components. Using Lemma \ref{xxsec3.2} and taking $x=m, y=m_{1}, u=a_{1}, v=a_{2}$ in the equation (LB) 
together with Lemma \ref{xxsec3.11} we get
$$
[\psi(m,m_{1},m_{2}),[a_{1},a_{2}]]+[[m,a_{2}],\psi(a_{1},m_{1},m_{2})]=[[m,m_{1}],\psi(a_{1},a_{2},m_{2})]+[\psi(m,a_{2},m_{2}),[a_{1},m_{1}]].
$$
By a direct computation of the above expression, we observe that
$$
[\psi(m,m_{1},m_{2}),[a_{1},a_{2}]]=0.
$$
In line with $\psi(m,m_{1},$ $m_{2})=\jmath(m)=\gamma m$, we obtain
$[\gamma m,[a_{1},a_{2}]]=0$. An immediate computation gives $\gamma [a_{1},a_{2}]m=0$ for all $m\in M$.
In view of the faithfulness of the left $A$-module $M$,
we see that $\gamma[a_{1},a_{2}]=0$. By the assumption (3) in Proposition \ref{xxsec3.1}, we assert that $\gamma=0$. 
That is, $\jmath(m)=\gamma m=0$ and $\ell(n)=-n\gamma=0$.
We therefore have $\psi(m,m_{1},m_{2})=\psi(n,n_{1},n_{2})=0$ for all $m, m_{1}, m_{2}\in M$
and $n, n_{1}, n_{2}\in N$.
\end{proof}

\begin{tcolorbox}[breakable, enhanced, blanker, left=3mm,right=3mm,
borderline west={2pt}{0pt}{brown}]

\noindent {\bf Proof of Proposition \ref{xxsec3.1}:}
\vspace{2mm}

Taking into accounts Lemmas \ref{xxsec3.10}-\ref{xxsec3.15}, we make sure that the well-established mapping $\psi$
possess the property 
$$
\psi(x, y, z)\in \mathcal{Z(G)}, \, \, \, \, \,  \forall x, y, z\in \mathcal{G}.
$$
Thus Proposition \ref{xxsec3.1} follows from Proposition \ref{xxsec3.7}. 
That is, every $3$-Lie derivation on a generalized matrix algebra $\mathcal{G}=\left[
\begin{array}
[c]{cc}%
A & M\\
N & B\
\end{array}
\right]$ can be decomposed into the
sum of an extremal $3$-derivation and a $3$-linear central-valued mapping.

Furthermore, note that the $3$-linear central-valued mapping $\psi$ is also a $3$-Lie derivation. 
Then for any $x, y, u, v \in G$, we have
$$
\psi([x,y],u,v)= [[\psi(x,u,v),y] + [x, \psi(y,u,v)] =0.
$$
This shows that $\psi$ vanishes on each commutator $[x, y]$ in the first component. 
Using analogous computational techniques, we see that the $3$-linear 
central-valued mapping $\psi$ also vanishes on arbitrary commutators in the second and third 
components. We therefore assert that $\psi$ vanishes on arbitrary commutators in each component.
\end{tcolorbox}

Applying Lemma \ref{xxsec3.4} and Proposition \ref{xxsec3.1} immediately yield the following corollary.

\begin{tcolorbox}[breakable, enhanced, blanker, left=3mm,right=3mm,
borderline west={2pt}{0pt}{blue}]

\begin{corollary}\label{xxsec3.16}
Let $\mathcal{G}=\left[
\begin{array}
[c]{cc}%
A & M\\
N & B\\
\end{array}
\right]$ be a generalized matrix algebra over a commutative ring $\mathcal{R}$ and
$\varphi:\mathcal{G}\times \mathcal{G}\times \mathcal{G}\longrightarrow\mathcal{G}$ be a $3$-Lie derivation on $\mathcal{G}.$ Suppose that
\begin{enumerate}
\item [{\rm (1)}] $\pi_{A}(\mathcal{Z}(\mathcal{G}))=\mathcal{Z}(A)$ and $\pi_{B}(\mathcal{Z}(\mathcal{G}))=\mathcal{Z}(B);$
\item [{\rm (2)}] either $A$ or $B$ does not contain nonzero central ideals;
\item [{\rm (3)}] if $\alpha a = 0, \alpha\in \mathcal{Z}(\mathcal{G}), 0 \neq a \in \mathcal{G},$ then $\alpha = 0;$
\item [{\rm (4)}] if $MN = 0 = NM$,
            then at least one of the algebras $A$ and $B$ is noncommutative;
\item [{\rm (5)}] every derivation on $\mathcal{G}$ is inner.
\end{enumerate}
Then $\varphi$ can be decomposed into $\varphi=\kappa+\psi$, where $\kappa$ is an extremal $3$-derivation such that 
$\kappa(x_1,x_2,x_3) =[x_1,[x_2,[x_3, X_0]]]$, $\psi$ is a $3$-linear central-valued mapping 
vanishing on commutators in each component, $X_0=e\varphi(e,e,e)f+(-1)^{3}f\varphi(e,e,e)e$.
\end{corollary}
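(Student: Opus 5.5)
The corollary differs from Proposition \ref{xxsec3.1} in one place only: hypothesis (5) there asks that every special pair of bimodule homomorphisms have standard form, whereas here we assume that every derivation on $\mathcal{G}$ is inner. So the plan is to show that the new hypothesis implies the old one, and then quote Proposition \ref{xxsec3.1}.

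The first step is to apply Lemma \ref{xxsec3.4}. It says that if every derivation on $\mathcal{G}$ is inner, then every special pair $(\mathcal{F},\mathcal{E})$ of bimodule homomorphisms $\mathcal{F}:M\to M$, $\mathcal{E}:N\to N$ has standard form. That is exactly hypothesis (5) of Proposition \ref{xxsec3.1}.

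Next I check the remaining hypotheses. Conditions (1)--(4) of the corollary are word for word conditions (1)--(4) of Proposition \ref{xxsec3.1}. The standing assumptions are also unchanged: $\mathcal{R}$ is $2$-torsionfree, and $M$ is faithful as a left $A$-module and as a right $B$-module. So every hypothesis of Proposition \ref{xxsec3.1} holds.

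Applying Proposition \ref{xxsec3.1} to $\varphi$ then gives the decomposition $\varphi=\kappa+\psi$. Here $\kappa(x_1,x_2,x_3)=[x_1,[x_2,[x_3,X_0]]]$ with $X_0=e\varphi(e,e,e)f+(-1)^3f\varphi(e,e,e)e$. The map $\psi$ is $3$-linear and central-valued, and it vanishes on commutators in each component; this last property was established at the end of the proof of Proposition \ref{xxsec3.1} from $\psi([x,y],u,v)=[\psi(x,u,v),y]+[x,\psi(y,u,v)]=0$.

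There is no real obstacle. The only point needing care is that Lemma \ref{xxsec3.4} is quoted from \cite{DuWang3}, so I would confirm that it is proved under the same standing conventions, namely $M$ faithful and at least one of $M$, $N$ nonzero.
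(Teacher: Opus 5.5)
Your proposal is correct and matches the paper's argument exactly: the paper obtains this corollary by using Lemma \ref{xxsec3.4} to turn ``every derivation on $\mathcal{G}$ is inner'' into hypothesis (5) of Proposition \ref{xxsec3.1}, whose hypotheses (1)--(4) are identical to the corollary's. Your caveat about Lemma \ref{xxsec3.4} needs no extra assumptions to resolve. The map $\left[\begin{smallmatrix} a & m\\ n & b\end{smallmatrix}\right]\mapsto\left[\begin{smallmatrix} 0 & \mathcal{F}(m)\\ \mathcal{E}(n) & 0\end{smallmatrix}\right]$ is a derivation precisely because of the special-pair identities. Writing it as $[T,\cdot\,]$ forces $T$ to be diagonal with central entries, which gives the standard form directly.
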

\end{tcolorbox}

We now provide another main result of this section, which also characterize the structure
of $3$-Lie derivations on a generalized matrix algebra $\mathcal{G}=\left[
\begin{array}
[c]{cc}%
A & M\\
N & B\\
\end{array}
\right]$ from an alternative perspective.

\begin{tcolorbox}[breakable, blanker,left=3mm,right=3mm,
borderline west={2pt}{0pt}{orange}]

\begin{proposition}\label{xxsec3.17}
Let $\mathcal{G}=\left[
\begin{array}
[c]{cc}%
A & M\\
N & B\\
\end{array}
\right]$ be a generalized matrix algebra over a commutative ring $\mathcal{R}$
and $\varphi:\mathcal{G}\times \mathcal{G}\times \mathcal{G}\longrightarrow\mathcal{G}$
be a $3$-Lie derivation on $\mathcal{G}.$ Suppose that
\begin{enumerate}
\item [{\rm (1)}]$\pi_{A}(\mathcal{Z}(\mathcal{G}))=\mathcal{Z}(A)$ and $\pi_{B}(\mathcal{Z}(\mathcal{G}))=\mathcal{Z}(B)$;
\item [{\rm (2)}] Either $A$ or $B$ does not contain nonzero central ideals;
\item [{\rm (3)}] For each $m\in M$, the condition $mN=0=Nm$ implies $m=0$;
\item [{\rm (4)}] For each $n\in N$, the condition $Mn=0=nM$ implies $n=0$;
\item [{\rm (5)}] Each special pair of bimodule homomorphisms has standard form.
\end{enumerate}
Then $\varphi$ has the form $\varphi=\kappa+\psi$, 
where $\kappa$ is an extremal $3$-derivation such that $\kappa(x, y, z) =[x, [y, [z, X_0]]]$ for all $x, y, z\in \mathcal{G}$, 
$\psi$ is a $3$-linear central-valued mapping vanishing on commutators in each component, 
where $X_0= e\varphi(e, e, e)f+(-1)^3f\varphi(e, e, e)e$.
\end{proposition}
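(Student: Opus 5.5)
The plan is to follow the proof of Proposition \ref{xxsec3.1} step by step. The only change is that its hypotheses (3) and (4) are replaced by the faithfulness-type conditions (3) and (4) of the present statement.

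\textbf{Reduction.} Proposition \ref{xxsec3.7} uses only hypotheses (1) and (2), so it applies verbatim. When $\varphi(e,e,e)\neq 0$ we write $\varphi=\kappa+\psi$, where $\kappa(x,y,z)=[x,[y,[z,X_0]]]$ is extremal and $\psi$ is a $3$-Lie derivation with $\psi(e,e,e)\in\mathcal{Z(G)}$. When $\varphi(e,e,e)=0$ we have $X_0=0$ and $\psi=\varphi$. One must check that Claim 4 of Proposition \ref{xxsec3.7} holds here, i.e.\ that $m_0N=Nm_0=0$ and $n_0M=Mn_0=0$; it relied only on (1), (2) and the faithfulness of $M$, so it transfers. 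Lemmas \ref{xxsec3.8}, \ref{xxsec3.10} and \ref{xxsec3.12}--\ref{xxsec3.14} likewise use only the Lie derivation identities, $2$-torsion freeness and the already established vanishing statements. They therefore carry over, provided we re-establish the two places where hypotheses (3) and (4) of Proposition \ref{xxsec3.1} entered: Claims 2--3 of Lemma \ref{xxsec3.11} and Lemma \ref{xxsec3.15}.

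\textbf{Replacement arguments.} In Lemma \ref{xxsec3.11}, Claim 2, the special pair $(\xi,\zeta)$ with $\xi(m)=\psi(e,e,m)$ and $\zeta(n)=\psi(e,e,n)$ has standard form $\xi(m)=\theta_0m$, $\zeta(n)=-n\theta_0$ by (1) and (5). Previously $\theta_0$ was killed using $MN\neq 0$ or noncommutativity of $A$. Here I would instead reuse (3.46)--(3.53) to get $\theta_0 mn\in\mathcal Z(A)$. Combined with the no-central-ideal hypothesis (2), this gives $\theta_0MN=0$, and similarly $\eta(\theta_0)NM=0$ on the $B$ side (or via $\eta$). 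Hence $\xi(m)N=\theta_0mN=0$ and $N\xi(m)=\eta(\theta_0)Nm=0$. Condition (3) then forces $\xi(m)=0$. Symmetrically, $\zeta(n)$ satisfies $M\zeta(n)=0=\zeta(n)M$, and condition (4) forces $\zeta(n)=0$. This route uses the present hypotheses directly and avoids any case split on $MN$. For Claim 3 of Lemma \ref{xxsec3.11} the situation is easier. The computation there already yields $\tau(m)n=0=n\tau(m)$ and $m\sigma(n)=0=\sigma(n)m$ for all $m,n$, before any standard form is invoked. So (3) and (4) give $\tau=0$ and $\sigma=0$ immediately. Likewise, in Lemma \ref{xxsec3.15}, (3.59) and (3.60) state exactly $\jmath(m)N=N\jmath(m)=0$ and $M\ell(n)=\ell(n)M=0$. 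Hence $\jmath=0$ and $\ell=0$ by (3) and (4), with no need for $\gamma$ or a case analysis.

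\textbf{Conclusion and main obstacle.} Assembling the lemmas shows $\psi(x,y,z)\in\mathcal{Z(G)}$ for all $x,y,z$ ranging over $A\cup M\cup N\cup B$, and hence, by trilinearity, on all of $\mathcal G$. Vanishing on commutators in each component then follows exactly as at the end of the proof of Proposition \ref{xxsec3.1}: for instance $\psi([x,y],u,v)=[\psi(x,u,v),y]+[x,\psi(y,u,v)]=0$. The step I expect to be the main obstacle is Claim 2 of Lemma \ref{xxsec3.11}. There the vanishing of $\xi(m)N$ and $N\xi(m)$ is not immediate and must be extracted from centrality of $e\psi(e,n,m)e$ and $f\psi(e,n,m)f$ together with the absence of nonzero central ideals. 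The delicate point is the case where only $B$ lacks nonzero central ideals: there one must run the symmetric argument with $\eta(\theta_0)nm$ in $\mathcal Z(B)$ and transport the conclusion back through $\eta$.
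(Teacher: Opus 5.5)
Your proposal is correct and follows essentially the paper's route. The paper reuses Proposition \ref{xxsec3.7} and the hypothesis-free lemmas, and changes only Lemmas \ref{xxsec3.18} and \ref{xxsec3.22}, in the same way you do: it gets $\theta_0MN=0$ from the central ideal $Q$, transfers this through $\eta$ to $N\theta_0M=0$, and applies condition (3), concluding with faithfulness of $M$ where you use condition (4) for $\zeta$; it also kills $\psi(m,m_1,m_2)$ and $\psi(n,n_1,n_2)$ directly by (3) and (4). The only deviation is in the $(\tau,\sigma)$ step, where you apply (3) and (4) straight to $\tau(m)N=N\tau(m)=0$ and $M\sigma(n)=\sigma(n)M=0$ instead of first writing $\tau(m)=\alpha_0m$ via the standard form, which is a valid and slightly cleaner shortcut.
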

\end{tcolorbox}

We would like to point out that although this result also characterizes the structure of $3$-Lie derivations on
a generalized matrix algebra $\mathcal{G}=\left[
\begin{array}
[c]{cc}%
A & M\\
N & B\\
\end{array}
\right]$, its proof shares considerable similarities with the proof of Proposition \ref{xxsec3.1}.
Therefore, in what follows, we shall only present the distinct parts comparing with the proof of Proposition \ref{xxsec3.1}, 
while referring the reader to the proof of Proposition \ref{xxsec3.1} for the same parts.

It should be noted that the main differences between the two proofs originate from 
the distinctions between hypothesis (3) and (4) in Proposition \ref{xxsec3.17}
and their counterparts in Proposition \ref{xxsec3.1}. Consequently, in our subsequent proof,
the variations arising from these differences will be our intensive attentions.

\begin{tcolorbox}[breakable, blanker,left=3mm,right=3mm,
borderline west={2pt}{0pt}{green}]

\begin{lemma}\label{xxsec3.18}
With notations as above, for all $x\in A\cup B$, $m\in M$, and $n\in N$, we have
\begin{enumerate}
  \item[{\rm (i)}]  $\psi(x,y_{1},m)=\psi(y_{1},x,m)=\psi(x,m,y_{1})=\psi(y_{1},m,x)=\psi(m,x,y_{1})=\psi(m,y_{1},x)=0$
  for all $y_{1}\in A\cup M \cup B$;
  \item[{\rm (ii)}] $\psi(x,y_{2},n)=\psi(y_{2},x,n)=\psi(x,n,y_{2})=\psi(y_{2},n,x)=\psi(n,x,y_{2})=\psi(n,y_{2},x)=0$
  for all $y_{2}\in A\cup N \cup B$.
\end{enumerate}
\end{lemma}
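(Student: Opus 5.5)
We follow the proof of Lemma \ref{xxsec3.11} closely. The steps not using hypotheses (3)--(4) of Proposition \ref{xxsec3.1} carry over verbatim. These are {\bf Claim 1} ($\psi(x,y,m)\in M$ and $\psi(x,z,n)\in N$ for $x\in A\cup B$), the reduction formulas (3.41), (3.42), (3.55) and (3.56), and the fact that $\xi(m)=\psi(e,e,m)$, $\zeta(n)=\psi(e,e,n)$, respectively $\tau(m)=\psi(e,y_1,m)$, $\sigma(n)=\psi(e,y_2,n)$, are bimodule homomorphisms. So we again reduce everything to showing $\psi(e,e,m)=\psi(e,e,n)=0$ and $\psi(e,y_1,m)=\psi(e,y_2,n)=0$ for $y_1\in M$, $y_2\in N$. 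As before, the central-ideal argument via hypothesis (2) gives $\xi(m)n+m\zeta(n)=0=n\xi(m)+\zeta(n)m$. Hypotheses (1) and (5) then give the standard form $\xi(m)=\theta_0 m$, $\zeta(n)=-n\theta_0$ with $\theta_0\in\mathcal{Z}(A)$.

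The new ingredient replaces the case distinction ``$MN\neq 0$ or $MN=0=NM$'' and the use of noncommutativity. From (3.47) and (3.53), $\theta_0 mn=-e\psi(e,n,m)e$ lies in $\mathcal{Z}(A)$. Also $\eta(\theta_0)nm=f\psi(e,n,m)f$ lies in $\mathcal{Z}(B)$. Moreover $\{\theta_0 mn\}$ is closed under left and right multiplication by $A$, because $\theta_0$ is central and $M$, $N$ are bimodules. So it is a central ideal of $A$, and hypothesis (2) gives $\theta_0 MN=0$. Applying $\eta$, we get $NM\eta(\theta_0)=0$ as well, hence also $\theta_0$-multiples vanish on both sides. (If instead $B$ is the algebra without central ideals, we run the symmetric argument on $\{\eta(\theta_0)nm\}$.)

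Consequently, for each fixed $m$, the element $\xi(m)=\theta_0 m$ satisfies $\xi(m)N=\theta_0 mN=0$ and $N\xi(m)=n\theta_0 m=\eta(\theta_0)nm=0$. Hypothesis (3) of Proposition \ref{xxsec3.17} yields $\xi(m)=0$. Similarly $M\zeta(n)=0=\zeta(n)M$, and hypothesis (4) gives $\zeta(n)=0$. Note that hypothesis (3) on $\mathcal{Z}(\mathcal{G})$ is never invoked, which is exactly the point of the lemma.

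For {\bf Claim 3}, we already know from the argument there (independent of the hypotheses in question) that $\tau(m)n=0=n\tau(m)$ and $m\sigma(n)=0=\sigma(n)m$ for all $m,n$. So hypotheses (3) and (4) immediately give $\tau=0$ and $\sigma=0$, even without the standard form. The remaining permutations of the arguments follow by the same computations with the roles of the components interchanged, as in Lemma \ref{xxsec3.11}.

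The main obstacle is the step $\theta_0 MN=0\Rightarrow N\theta_0 M=0$. It relies on transporting vanishing from $A$ to $B$ through $\eta$. To do this, we use that $(\theta_0 m n)m'=\theta_0 m(nm')=m\,\eta(\theta_0)... $. Concretely, we would verify $n\theta_0 m\,n'=n\,(\theta_0 mn')=0$ and $m'\,n\theta_0 m=\theta_0 m'nm=0$. Together these give that $\xi(m)$ is annihilated as needed. If this falls short, we fall back on the centrality relation (3.53) directly.
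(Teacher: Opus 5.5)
Your proposal is correct and is essentially the paper's proof: it uses the same reduction to $\xi,\zeta,\tau,\sigma$, the same central ideal $\theta_0MN$ killed by hypothesis (2), and the same appeal to hypotheses (3)--(4) of Proposition \ref{xxsec3.17}; the step $N\theta_0M=0$ (``applying $\eta$'') is best made precise as the paper and your fallback do, namely $n\theta_0m=f\psi(e,n,m)f=\eta(e\psi(e,n,m)e)=0$ by (3.53), though your direct computation also works once you say that $m'(n\theta_0m)=(\theta_0m'n)m=0$ for all $m'$ forces $n\theta_0m=0$ by faithfulness of $M$ as a right $B$-module. Your remark that in Case 2 the relations $\tau(m)N=0=N\tau(m)$ and $M\sigma(n)=0=\sigma(n)M$ let hypotheses (3)--(4) kill $\tau$ and $\sigma$ directly is valid and slightly streamlines the paper, which first passes through the standard form $\tau(m)=\alpha_0m$ and uses faithfulness to get $\alpha_0=0$.
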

\end{tcolorbox}

\begin{proof}
To prove this result, we will simultaneously establish $\psi(x,y_{1},m)=0$ and $\psi(x,y_{2},n)=0$
for all $x\in A\cup B, m\in M, n\in N$, $y_{1}\in A\cup M \cup B$ and $y_{2}\in A\cup N \cup B$.
For clarity of presentation, we divide our argument into
two cases basing on the value of $y$, {\bf Case 1}: $y_{1},y_{2} \in A\cup B$;
{\bf Case 2}: $y_{1} \in M, y_{2} \in N $.

{\bf Case 1}: $y_{1},y_{2} \in A\cup B$.

Note that these relations in this lemma are exactly those in Lemma \ref{xxsec3.11}.
Through a careful evaluation and verification, we find out that their proofs share significant similarities.
Adopting the same computational procedure as in Lemma \ref{xxsec3.11},
we can establish the mappings  $\xi: M \longrightarrow M$ and
$\zeta: N\longrightarrow N$ defined by $\xi(m)=\psi(e,e,m)\in M$ for all $m\in M$ and $\zeta(n)=\psi(e,e,n)\in N$ for all $n\in N$, respectively.
It is straightforward to check that the mapping $\xi: M \longrightarrow M$
is an $(A,B)$-bimodule homomorphism and that the mapping $\zeta: N\longrightarrow N$
is a $(B,A)$-bimodule homomorphism. 

Furthermore, the two bimodule homomorphisms
constitute a special pair $(\xi, \zeta)$ which enable the mappings $\xi: M \longrightarrow M$ and
$\zeta: N\longrightarrow N$ to satisfy the conditions 
$$
\xi(m)=a_0m+mb_0=(a_0+\eta(b_0))m=\theta_0 m~\text{and}~\zeta(n)=-na_0-b_0n=-n(a_0+\eta(b_0))=-n\theta_0,
$$
for some $a_0\in \mathcal{Z}(A)$ and $b_0\in \mathcal{Z}(B)$,
where $\theta_0=a_0+\eta(b_0)\in \pi_{A}(\mathcal{Z(G)})$ for all $m\in M, n\in N$.

Proceeding on our discussion as (3.46) and its subsequent proof,
we can establish
$$
e\psi(e,n,m)e+f\psi(e,n,m)f\in\mathcal{Z(G)}, \, \, \, \, \, \forall m\in M, n\in N, 
\eqno{(3.61)}
$$
which is identical with the relation (3.53). 
Using the fact that
the second component is a Lie derivation together with (3.61), we get
$$
\begin{aligned}
\psi(e,n,m)&=\psi(e,[n,e],m)\\
&=[\psi(e,n,m),e]+[n,\psi(e,e,m)]\\
&=\psi(e,n,m)e-e\psi(e,n,m)+n\psi(e,e,m)-\psi(e,e,m)n\\
\end{aligned} 
\eqno{(3.62)}
$$
for all $m\in M, n\in N$.
Premultiply and postmultiply (3.62) by $e$ to obtain 
$$
e\psi(e,n,m)e=-\psi(e,e,m)n=-\theta_0 mn\in\mathcal{Z(G)}, \, \, \, \, \, \forall m\in M, n\in N.
$$
Multiply (3.62) by $f$ from two sides to get 
$$
f\psi(e,n,m)f=n\psi(e,e,m)f=n\theta_0 m=\eta(\theta_0)nm\in\mathcal{Z(G)}, \, \, \, \, \, \forall m\in M, n\in N.
$$
Define the set $Q=\{ \, \theta_0 mn \mid \, \text{for all}\, m\in M, n\in N\}$. It is clear that $Q$ is a 
central ideal of the algebra $A$. According to the hypothesis (2) of Proposition \ref{xxsec3.17}, without loss of generality, 
we might assume that the algebra $A$ does not contain nonzero central ideals. 
This gives that $Q=0$. That is $(\theta_0 m)n=0$ for all $n\in N$.
It follows from (3.61) that
$$
n\theta_0 m=f\psi(e,n,m)f=\eta(e\psi(e,n,m)e)=0, \, \, \, \, \, \forall m\in M, n\in N.
$$
And then $N(\theta_0 m)=0$. Combining the fact $N(\theta_0 m)=0=(\theta_0 m)N$ with the 
hypothesis $(3)$ of Proposition \ref{xxsec3.17} gives
$\theta_0 m=0$. The faithfulness of $M$ implies that $\theta_0=0$. According to the definitions of mappings $\xi: M \longrightarrow M$ and
$\zeta: N\longrightarrow N$, it can be seen that the relations $\xi(m)=\psi(e,e,m)=0$
and $\zeta(n)=\psi(e,e,n)=0$ hold true for all $m\in M, n\in N$.

Finally, according to the relations (3.41) and (3.42), we conclude that the mappings
$\psi(x,y_{1},m)=0$ and $\psi(x,y_{2},n)=0$ for all $x\in A\cup B, y_1\in A\cup B,
y_2\in A\cup B$ and $m\in M, n\in N$.
The remaining conclusions in (i) and (ii) of this lemma can be achieved in an analogous manner.

{\bf Case 2}: Let us next consider the second case $y_1\in M, y_2\in N$.

For this case, let us adhere to the proof strategy of {\bf Claim 3} of Lemma \ref{xxsec3.11}.
Through careful evaluations and detailed computations,
we make sure that the relations (3.55) and (3.56) still hold true. Consequently, the mappings $\tau:M\longrightarrow M$ and $\sigma:N\longrightarrow N$,
defined by $\tau(m)=\psi(e,y_1, m)$ and $\sigma(n)=\psi(e, y_2, n)$
for all $y_1,m\in M$ and $y_2, n\in N$ respectively, are also reasonable.
Employing the same methods and techniques, one can show that the mapping $\tau:M\longrightarrow M$
is an $(A,B)$-bimodule homomorphism and that the mapping $\sigma:N\longrightarrow N$ is a
$(B,A)$-bimodule homomorphism. In this sense, the mappings $\tau:M\longrightarrow M$ and $\sigma:N\longrightarrow N$
form special pairs $(\tau, \sigma)$ of bimodule homomorphisms.

By the hypothesis (1) and (5) of Proposition \ref{xxsec3.17}, there exists a central
element $\alpha_0\in \mathcal{Z}(A)$ such that
$$
\tau(m)=\alpha_0 m~~\text{and}~~\sigma(n)=-n \alpha_0
$$
for all $m\in M, n\in N$. In the process of verifying that mappings $\tau:M\longrightarrow M$ and $\sigma:N\longrightarrow N$
are special pairs $(\tau, \sigma)$, we further establish the equations $n\tau(m)=0$ and $m\sigma(n)=0$,
from which equations $n(\alpha_0 m)=0$ and $m(n \alpha_0)=(\alpha_0m)n=0$ follow.
Considering the hypothesis (3) of Proposition \ref{xxsec3.17}, we know that $\alpha_0m=0$ for all $m\in M$.
The faithfulness of $M$ leads to $\alpha_0=0$.
Thus $\tau(m)=0$ and $\sigma(n)=0$ hold true for all $m\in M, n\in N$. This is equivalent to saying that 
$\psi(e,y_1, m)=0$ and $\psi(e, y_2, n)=0$ for all $y_1, m\in M, y_2, n\in N$.
Applying the equations (3.55) and (3.56) yields $\psi(x,y_1, m)=0$ and $\psi(x, y_2, n)=0$ for all $x\in A\cup B, y_1, m\in M$ and $y_2, n\in N$.

The remaining statements (i) and (ii) in this lemma can be obtained in an analogous method and strategy.

\end{proof}

A thorough analysis and meticulous computation of Lemmas \ref{xxsec3.12}-\ref{xxsec3.14}
reveal that their proofs rely solely on:
the behavior of Lie derivations with respect to each component, the characterization of the algebraic center, and
conclusions obtained from previous lemmas.
Consequently, those fundamental and underlying methods employed in Lemmas \ref{xxsec3.12}-\ref{xxsec3.14} 
could be moved to here under the new perspective without any essential modifications. 
Of course, the corresponding results and counterparts can be successfully achieved.
To facilitate the exposition of Proposition \ref{xxsec3.17}, we are going to restate these conclusions without detailed proofs.

\begin{tcolorbox}[breakable, blanker,left=3mm,right=3mm,
borderline west={2pt}{0pt}{green}]

\begin{lemma}\label{xxsec3.19}
With notations as above, we have
$$
\psi(x,y,z)\in  \mathcal{Z(G)}, \, \, \, \, \, \forall x, y, z\in A\cup B.
$$
\end{lemma}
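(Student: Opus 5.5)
We follow the proof of Lemma \ref{xxsec3.12} verbatim; the only inputs are Lemma \ref{xxsec3.10} and the vanishing results of Lemma \ref{xxsec3.18}, which replace Lemma \ref{xxsec3.11}. Lemma \ref{xxsec3.10} only uses Proposition \ref{xxsec3.7} and Lemma \ref{xxsec3.8}, i.e. conditions (1) and (2) of Proposition \ref{xxsec3.17}. So it remains valid here, and gives $\psi(x,y,z)\in\left[\begin{array}{cc}A&O\\O&B\end{array}\right]$ for all $x,y,z\in A\cup B$. Thus we only need to verify the two commutation relations with $M$ and $N$ that characterize $\mathcal{Z(G)}$.

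First fix $a\in A$, $y,z\in A\cup B$, $m\in M$ and $n\in N$. We use $am=[a,m]$ and Lie derivation in the first component:
$$
\psi(am,y,z)=[\psi(a,y,z),m]+[a,\psi(m,y,z)].
$$
By Lemma \ref{xxsec3.18}(i) both $\psi(am,y,z)$ and $\psi(m,y,z)$ vanish, since $am,m\in M$. Hence $\psi(a,y,z)m=m\psi(a,y,z)$. Similarly, $na=[n,a]$ together with Lemma \ref{xxsec3.18}(ii) gives $n\psi(a,y,z)=\psi(a,y,z)n$. Since $\psi(a,y,z)$ is block diagonal, these two relations are exactly the defining conditions of $\mathcal{Z(G)}$, so $\psi(a,y,z)\in\mathcal{Z(G)}$. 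The case $b\in B$ in the first slot is identical, using $mb=[m,b]$ and $bn=[b,n]$.

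The only point needing care is that Lemma \ref{xxsec3.18} really covers the configurations used, namely $\psi(m,y,z)=0$ and $\psi(n,y,z)=0$ for $y,z\in A\cup B$. These are the entries $\psi(m,x,y_1)$ and $\psi(n,x,y_2)$ with $x,y_1,y_2\in A\cup B$ listed there, so no new input beyond hypotheses (3)--(5) of Proposition \ref{xxsec3.17}, already absorbed in Lemma \ref{xxsec3.18}, is required. I expect no real obstacle; the substantive work was done in Lemma \ref{xxsec3.18}.
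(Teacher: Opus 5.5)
Your proposal is correct and is the argument the paper intends. The paper gives no separate proof of Lemma \ref{xxsec3.19}; it only says that the proof of Lemma \ref{xxsec3.12} carries over. You transplant that proof verbatim, using the block-diagonal form from Lemma \ref{xxsec3.10}, the identities $am=[a,m]$ and $na=[n,a]$ (resp.\ $mb=[m,b]$ and $bn=[b,n]$), and the vanishing results of Lemma \ref{xxsec3.18} in place of Lemma \ref{xxsec3.11}. Your explicit use of the diagonal form to read off the two centrality conditions is, if anything, cleaner than the paper's display (3.57).
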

\end{tcolorbox}

\begin{tcolorbox}[breakable, blanker,left=3mm,right=3mm,
borderline west={2pt}{0pt}{green}]

\begin{lemma}\label{xxsec3.20}
With notations as above, for all $x\in A\cup B$, $m\in M$, and $n\in N$, we have
\begin{enumerate}
  \item [{\rm (i)}]  $\psi(x,n,m)=\psi(n,x,m)=\psi(x,m,n)=0$;
  \item [{\rm (ii)}] $\psi(x,m,n)=\psi(m,x,n)=\psi(x,n,m)=0$.
\end{enumerate}
\end{lemma}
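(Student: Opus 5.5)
We will prove Lemma \ref{xxsec3.20} exactly as Lemma \ref{xxsec3.13} was proved. The only inputs are the Lie-derivation property of $\psi$ in each component, the $2$-torsionfreeness of $\mathcal{R}$, and the vanishing results of Lemma \ref{xxsec3.18}. The latter are the Proposition \ref{xxsec3.17}-counterparts of Lemma \ref{xxsec3.11}: $\psi$ vanishes whenever one argument lies in $A\cup B$ and another lies in $M$ (resp. $N$), with the remaining argument in $A\cup B\cup M$ (resp. $A\cup B\cup N$). The mechanism is to write an off-diagonal argument as a commutator with $e$ in two different components, which produces two expressions for the same value.

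For (i), first take $\psi(x,n,m)$ with $x\in A\cup B$. Writing $n=[n,e]$ in the second component gives
$$\psi(x,n,m)=[\psi(x,n,m),e]+[n,\psi(x,e,m)]=[\psi(x,n,m),e],$$
since $\psi(x,e,m)=0$ by Lemma \ref{xxsec3.18}(i) (there $y_1=e\in A$). Writing $m=[e,m]$ in the third component gives
$$\psi(x,n,m)=[\psi(x,n,e),m]+[e,\psi(x,n,m)]=[e,\psi(x,n,m)],$$
since $\psi(x,n,e)=0$ by Lemma \ref{xxsec3.18}(ii). Adding the two identities, the right-hand sides cancel, so $2\psi(x,n,m)=0$, hence $\psi(x,n,m)=0$.

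The term $\psi(n,x,m)$ is handled the same way, expanding $n=[n,e]$ in the first slot and $m=[e,m]$ in the third slot. The auxiliary terms are then $\psi(e,x,m)$ and $\psi(n,x,e)$, which vanish by Lemma \ref{xxsec3.18}. For $\psi(x,m,n)$ we expand $m=[e,m]$ in the second slot and $n=[n,e]$ in the third slot. The auxiliary terms are $\psi(x,e,n)$ and $\psi(x,m,e)$, again zero by Lemma \ref{xxsec3.18}. Part (ii) is the same list with the roles of $m$ and $n$ exchanged, so the identical computation applies verbatim with the appropriate parts of Lemma \ref{xxsec3.18}.

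The only point needing care is bookkeeping. In each case we must check that the two chosen expansions produce commutators with $e$ of opposite orientation, namely $[\cdot,e]$ and $[e,\cdot]$. This is what makes their sum zero once we note that $[X,e]+[e,X]=0$. We must also check that every auxiliary term has one entry in $A\cup B$ (namely $e$ or $x$) together with an $M$- or $N$-entry, so that it is genuinely covered by Lemma \ref{xxsec3.18}. No use of the special-pair hypothesis or of hypotheses (3)–(4) of Proposition \ref{xxsec3.17} is required here; they entered only through Lemma \ref{xxsec3.18}.
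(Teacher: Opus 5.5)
Your proposal is correct and matches the paper's route: the paper gives no separate proof of this lemma and instead says the proof of Lemma \ref{xxsec3.13} carries over unchanged, and that proof is exactly your computation. One expands $n=[n,e]$ and $m=[e,m]$ in two different slots, kills the auxiliary terms with the preceding vanishing lemma (Lemma \ref{xxsec3.18} in place of Lemma \ref{xxsec3.11}), and adds the two identities to get $2\psi=0$, concluding with 2-torsionfreeness in the same sense the paper uses throughout.
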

\end{tcolorbox}

\begin{tcolorbox}[breakable, blanker,left=3mm,right=3mm,
borderline west={2pt}{0pt}{green}]

\begin{lemma}\label{xxsec3.21}
With notations as above, for all  $m_{1}, m_{2}\in M$, and $n_{1}, n_{2}\in N$, we have
\begin{enumerate}
  \item[{\rm (i)}]  $\psi(m_{1},n,m_{2})=\psi(n,m_{1},m_{2})=\psi(m_{1},m_{2},n)=\psi(m_{2},n,m_{1})=\psi(n,m_{2},m_{1})=\psi(m_{2},m_{1},n)=0$;
  \item[{\rm (ii)}] $\psi(n_{1},m,n_{2})=\psi(m,n_{1},n_{2})=\psi(n_{1},n_{2},m)=\psi(n_{2},m,n_{1})=\psi(m,n_{2},n_{1})=\psi(n_{2},n_{1},m)=0$.
\end{enumerate}
\end{lemma}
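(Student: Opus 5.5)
The plan is to follow the proof of Lemma \ref{xxsec3.14} verbatim, now in the setting of Proposition \ref{xxsec3.17}. The only inputs are the Lie derivation property in each component, the Peirce decomposition with respect to $e$ and $f$, and the vanishing results of Lemma \ref{xxsec3.18} and Lemma \ref{xxsec3.20}. Lemma \ref{xxsec3.18} and Lemma \ref{xxsec3.20} replace Lemma \ref{xxsec3.11} and Lemma \ref{xxsec3.13} respectively, and none of hypotheses (3)--(4) of Proposition \ref{xxsec3.17} is needed here.

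Take the representative case $\psi(m_1,m_2,n)$. First I would write $m_1=[e,m_1]$ in the first component:
$$
\psi(m_1,m_2,n)=[\psi(e,m_2,n),m_1]+[e,\psi(m_1,m_2,n)].
$$
Here $\psi(e,m_2,n)=0$ by Lemma \ref{xxsec3.20}(ii) with $x=e\in A$, so $\psi(m_1,m_2,n)=e\psi(m_1,m_2,n)-\psi(m_1,m_2,n)e$. Next I would write $n=[n,e]$ in the third component:
$$
\psi(m_1,m_2,n)=[\psi(m_1,m_2,n),e]+[n,\psi(m_1,m_2,e)].
$$
Here $\psi(m_1,m_2,e)=0$ by Lemma \ref{xxsec3.18}(i), taking $m=m_1$, $y_1=m_2$, $x=e$ in the form $\psi(m,y_1,x)=0$. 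This gives $\psi(m_1,m_2,n)=\psi(m_1,m_2,n)e-e\psi(m_1,m_2,n)$. Adding the two identities yields $2\psi(m_1,m_2,n)=0$, and the $2$-torsion freeness of $\mathcal{R}$ gives $\psi(m_1,m_2,n)=0$.

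The remaining five orderings in (i) are handled the same way. In each, I express the $M$-entry as $[e,m]$ and the $N$-entry as $[n,e]$ in their respective components, which produces two opposite expressions $\pm(e\psi-\psi e)$. For example, for $\psi(m_1,n,m_2)$ I would use $m_1=[e,m_1]$ in the first slot and $n=[n,e]$ in the second. The terms this produces are $\psi(e,n,m_2)$ and $\psi(m_1,e,m_2)$, and both vanish by Lemmas \ref{xxsec3.20} and \ref{xxsec3.18}. Part (ii) is symmetric: interchange the roles of $M$ and $N$, using $n_i=[n_i,e]$ and $m=[e,m]$.

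The only real obstacle is bookkeeping. For each of the twelve orderings I must choose the two components so that the leftover terms always contain exactly one $e$ together with one element of $M$ and one of $N$, or together with two elements of the same module. Those are precisely the patterns covered by Lemmas \ref{xxsec3.18} and \ref{xxsec3.20}. The choice always works because only two module-type slots are ever needed.
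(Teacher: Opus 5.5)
Your proposal is correct and is essentially the paper's argument. The paper states Lemma \ref{xxsec3.21} without proof, as a transfer of Lemma \ref{xxsec3.14}, and that proof is exactly your pair of identities obtained from $m=[e,m]$ and $n=[n,e]$, followed by $2$-torsion-freeness. One bookkeeping caveat: Lemma \ref{xxsec3.20} as stated lists only four of the six orderings of $(x,m,n)$, omitting those with $x$ in the third slot, so you should always replace a copy of the repeated module that is not in the third slot (always possible, since that module occupies two slots), or else note that those two orderings vanish by the same two-line argument.
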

\end{tcolorbox}

\begin{tcolorbox}[breakable, blanker,left=3mm,right=3mm,
borderline west={2pt}{0pt}{green}]

\begin{lemma}\label{xxsec3.22}
With notations as above, for all  $m, m_{1}, m_{2}\in M$, and $n, n_{1}, n_{2}\in N$, we have
\begin{enumerate}
  \item[{\rm (i)}]  $\psi(m,m_{1},m_{2})=0;$
  \item[{\rm (ii)}] $\psi(n,n_{1},n_{2})=0$.
\end{enumerate}
\end{lemma}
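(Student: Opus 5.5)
We follow the proof of Lemma \ref{xxsec3.15} step by step. The only change is at the point where hypotheses (3) and (4) of Proposition \ref{xxsec3.1} were used; there we substitute hypotheses (3) and (4) of Proposition \ref{xxsec3.17}. That step is in fact easier now, because no case distinction between $MN\neq 0$ and $MN=0=NM$ is needed.

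First we locate $\psi(m,m_1,m_2)$ and $\psi(n,n_1,n_2)$ in the Peirce decomposition. Write $m_1=[e,m_1]$ in the second component and use Lemmas \ref{xxsec3.18} and \ref{xxsec3.20}. This gives $\psi(m,m_1,m_2)=e\psi(m,m_1,m_2)-\psi(m,m_1,m_2)e$, so $\psi(m,m_1,m_2)\in M$. Similarly, writing $n_1=[n_1,e]$ gives $\psi(n,n_1,n_2)\in N$.

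Next we fix $m_1,m_2\in M$ and $n_1,n_2\in N$ and define $\jmath(m)=\psi(m,m_1,m_2)$ and $\ell(n)=\psi(n,n_1,n_2)$. Expanding $amb=[a,mb]$ and $mb=[m,b]$, and killing the terms $\psi(a,m_1,m_2)$ and $\psi(b,m_1,m_2)$ by Lemma \ref{xxsec3.18}, shows that $\jmath$ is an $(A,B)$-bimodule homomorphism. The same computation shows that $\ell$ is a $(B,A)$-bimodule homomorphism.

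Now we test on commutators. Since $\psi(mn-nm,m_1,m_2)=\psi([m,n],m_1,m_2)$, Lemma \ref{xxsec3.21} gives $\psi(n,m_1,m_2)=0$, and hence $[\jmath(m),n]=0$. Therefore $\jmath(m)n=0=n\jmath(m)$. In the same way, $m\ell(n)=0=\ell(n)m$. Consequently $(\jmath,\ell)$ is trivially a special pair.

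For the final step we have two routes.

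\textbf{Direct route.} From $\jmath(m)N=0=N\jmath(m)$, hypothesis (3) of Proposition \ref{xxsec3.17} immediately gives $\jmath(m)=0$. From $M\ell(n)=0=\ell(n)M$, hypothesis (4) gives $\ell(n)=0$. This proves (i) and (ii) with no further work.

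\textbf{Standard-form route.} Alternatively, hypotheses (1) and (5) give $\jmath(m)=\gamma m$ and $\ell(n)=-n\gamma$ with $\gamma\in\mathcal{Z}(A)$. Then $(\gamma m)N=0=N(\gamma m)$, so hypothesis (3) gives $\gamma m=0$ for all $m$. Faithfulness of $M$ then gives $\gamma=0$, so $\ell=0$ as well.

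The direct route avoids invoking special pairs at all, so I would present it and mention the other as consistent with the earlier pattern.

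The only point requiring care is the bookkeeping in the two expansions. In the bimodule computation and in the commutator identity, every cross term must be of a type already shown to vanish. Specifically, $\psi(a,m_1,m_2)$, $\psi(b,m_1,m_2)$, $\psi(e,m_1,m_2)$ are handled by Lemma \ref{xxsec3.18}, and $\psi(n,m_1,m_2)$ by Lemma \ref{xxsec3.21}. I expect no genuine obstacle beyond verifying these vanishings, together with the symmetric versions for $N$.
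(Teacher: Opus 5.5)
Your proposal is correct, and your direct route is exactly the paper's argument. You place $\psi(m,m_1,m_2)$ in $M$ as in Lemma~\ref{xxsec3.15}; note that killing $f\psi(m,m_1,m_2)e$ there uses $2$-torsion-freeness. You then expand $\psi([m,n],m_1,m_2)$ via Lemmas~\ref{xxsec3.18} and \ref{xxsec3.21} to get $\psi(m,m_1,m_2)N=0=N\psi(m,m_1,m_2)$, and conclude by hypothesis (3) of Proposition~\ref{xxsec3.17}, symmetrically by hypothesis (4) for $N$. The bimodule-homomorphism check and the standard-form route are valid but, as you yourself observe, unnecessary.
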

\end{tcolorbox}

\begin{proof}
Using an analogous method as in Lemma \ref{xxsec3.15}, we have 
$\psi(m,m_{1},m_{2})\in M$ and $\psi(n,n_{1},n_{2})\in N$ for all $m, m_1, m_2\in M, n, n_1, n_2\in N$.
Applying Lemmas \ref{xxsec3.18}-\ref{xxsec3.21}, we arrive at 
$$
\begin{aligned}
0=&\psi(mn, m_{1}, m_{2})-\psi(nm, m_{1}, m_{2})\\
=&\psi([m, n], m_{1}, m_{2})\\
=&[\psi(m, m_{1}, m_{2}), n]+[m,\psi(n, m_{1}, m_{2})]\\
=&[\psi(m, m_{1}, m_{2}), n]\\
=&\psi(m, m_{1}, m_{2})n-n\psi(m, m_{1}, m_{2})
\end{aligned}
$$
for all $m, m_1, m_2\in M, n\in N$. This implies that $\psi(m, m_{1}, m_{2})n=0$ and $n\psi(m, m_{1}, m_{2})=0$.
The conclusion $\psi(m, m_{1}, m_{2})=0$ follows from the hypothesis (3) in Proposition \ref{xxsec3.17}.

Adopting similar computational approaches and utilizing the hypothesis (4) of Proposition \ref{xxsec3.17},
we have $\psi(n, n_{1}, n_{2})=0$ for all $n, n_1, n_2\in N$.
\end{proof}

\begin{tcolorbox}[breakable, blanker,left=3mm,right=3mm,
borderline west={2pt}{0pt}{brown}]

\noindent {\bf Proof of Proposition \ref{xxsec3.17}:}
\vspace{2mm}

Through a careful computation, it is straightforward to see that Proposition \ref{xxsec3.7} 
holds true under the assumptions of Proposition \ref{xxsec3.17}. By invoking Proposition \ref{xxsec3.7}, 
it follows that $\varphi$ can be decomposed into 
the sum of an extremal $3$-derivation $\kappa$ and a Lie $3$-derivation $\psi$ satisfying the condition 
$\psi(e,e,e)\in \mathcal{Z}(\mathcal{G})$. That is, $\varphi=\kappa+\psi$. In view of Lemma 
\ref{xxsec3.10} and Lemmas \ref{xxsec3.18}-\ref{xxsec3.22}, 
we conclude that the well-established mapping $\psi$ possess the property 
$$
\psi(x, y, z)\in \mathcal{Z(G)}, \, \, \, \, \,  \forall x, y, z\in \mathcal{G}.
$$
Therefore, under the assumptions of Proposition \ref{xxsec3.17}, every $3$-Lie derivation on a generalized matrix algebra $\mathcal{G}=\left[
\begin{array}
[c]{cc}%
A & M\\
N & B\\
\end{array}
\right]$ can 
can be expressed as the sum of an extremal $3$-derivation and a $3$-linear central-valued mapping.

In addition, since the $3$-linear central-valued mapping $\psi$ is also a $3$-Lie derivation, we know that
$$
\psi([x,y],u,v)= [[\psi(x,u,v),y] + [x, \psi(y,u,v)] =0
$$
for all $x, y, u, v \in G$. This implies that $\psi$ vanishes on an arbitrary commutator $[x, y]$ in the first component. 
Similarly, the $3$-linear central-valued mapping $\psi$ also vanishes on all commutators in the second and third 
components. So $\psi$ vanishes on all commutators in each component.
\end{tcolorbox}

As an immediate consequence of Lemma \ref{xxsec3.4} and Proposition \ref{xxsec3.17},
we have the following corollary:

\begin{tcolorbox}[breakable, blanker,left=3mm,right=3mm,
borderline west={2pt}{0pt}{blue}]

\begin{corollary}\label{xxsec3.23}
Let $\mathcal{G}=\left[
\begin{array}
[c]{cc}%
A & M\\
N & B\\
\end{array}
\right]$ be a generalized matrix algebra over a commutative ring $\mathcal{R}$
and $\varphi:\mathcal{G}\times \mathcal{G}\times \mathcal{G}\longrightarrow\mathcal{G}$
be a $3$-Lie derivation on $\mathcal{G}$. Suppose that
\begin{enumerate}
\item [{\rm (1)}] $\pi_{A}(\mathcal{Z}(\mathcal{G}))=\mathcal{Z}(A)$ and $\pi_{B}(\mathcal{Z}(\mathcal{G}))=\mathcal{Z}(B);$
\item [{\rm (2)}] either $A$ or $B$ does not contain nonzero central ideals;
\item [{\rm (3)}] For each $n\in N$, the condition $Mn = 0 = nM$ implies $n=0$;
\item [{\rm (4)}] For each $m\in M$, the condition $Nm = 0 = mN$ implies $m=0$;
\item [{\rm (5)}] each derivation on $\mathcal{G}$ is inner.
\end{enumerate}
Then $\varphi$ is of the form $\varphi=\kappa+\psi,$
where $\kappa$ is an extremal $3$-derivation such that
$\kappa(x, y, z) =[x, [y, [z, X_0]]]$ for all 
$x, y, z\in \mathcal{G}$, $\psi$ is a $3$-linear central-valued mapping vanishing on commutators 
in each component, $X_0= e\varphi(e, e, e)f+(-1)^3f\varphi(e, e, e)e$.
\end{corollary}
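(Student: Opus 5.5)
The plan is to deduce Corollary \ref{xxsec3.23} directly from Proposition \ref{xxsec3.17}, in the same way the paper obtained Corollary \ref{xxsec3.16} from Proposition \ref{xxsec3.1}. Hypotheses (1) and (2) of the corollary are verbatim those of Proposition \ref{xxsec3.17}. Hypotheses (3) and (4) of the corollary are the two nondegeneracy conditions on $N$ and on $M$, listed in the opposite order. So they coincide with hypotheses (4) and (3) of Proposition \ref{xxsec3.17}. The only hypothesis to replace is (5): Proposition \ref{xxsec3.17} requires that every special pair of bimodule homomorphisms have standard form, whereas the corollary assumes that every derivation of $\mathcal{G}$ is inner.

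To bridge this gap I would invoke Lemma \ref{xxsec3.4}, which states that if every derivation on $\mathcal{G}$ is inner, then every special pair $(\mathcal{F},\mathcal{E})$ with $\mathcal{F}:M\longrightarrow M$ and $\mathcal{E}:N\longrightarrow N$ has the standard form. This lemma needs no further assumptions, so hypothesis (5) of Proposition \ref{xxsec3.17} holds. All five hypotheses of Proposition \ref{xxsec3.17} are then satisfied.

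Applying Proposition \ref{xxsec3.17} to $\varphi$ gives $\varphi=\kappa+\psi$. Here $\kappa(x,y,z)=[x,[y,[z,X_0]]]$ with $X_0=e\varphi(e,e,e)f+(-1)^3f\varphi(e,e,e)e$ is an extremal $3$-derivation, and $\psi$ is a $3$-linear central-valued map vanishing on commutators in each component. This is exactly the conclusion of the corollary.

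No step is a genuine obstacle. The only care needed is the bookkeeping: check that the swapped numbering of the conditions on $M$ and $N$ gives precisely the two annihilator conditions used in Lemmas \ref{xxsec3.18} and \ref{xxsec3.22}, and that Lemma \ref{xxsec3.4} is applied with no hidden extra hypothesis such as faithfulness beyond the standing assumption that $M$ is faithful.
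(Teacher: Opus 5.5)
Your proposal is correct and matches the paper's own argument. The paper obtains Corollary~\ref{xxsec3.23} as an immediate consequence of Lemma~\ref{xxsec3.4} (every derivation inner implies every special pair has standard form) together with Proposition~\ref{xxsec3.17}, and hypotheses (3)--(4) correspond to those of Proposition~\ref{xxsec3.17} with the order swapped, exactly as you noted.
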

\end{tcolorbox}

\section{Main Theorems and Its Applications}
\label{xxsec4}

This section is devoted to our two main theorems and its detailed proofs. Some related applications are also presented.

\begin{tcolorbox}[breakable, blanker,left=3mm,right=3mm,
borderline west={2pt}{0pt}{red}]

\begin{theorem}\label{xxsec4.1}
Let $\mathcal{G}=\left[
\begin{array}
[c]{cc}%
A & M\\
N & B\\
\end{array}
\right]$ be a generalized matrix algebrabe over a commutative ring $\mathcal{R}$ and
$\varphi:\underbrace{\mathcal{G}\times \cdots \times \mathcal{G}}_n\longrightarrow\mathcal{G}$ be a
$n$-Lie derivation on $\mathcal{G}\, \, (n\geq 3)$ Suppose that $\mathcal{G}$ satisfies the following conditions:
\begin{enumerate}
\item [{\rm (1)}] $\pi_{A}(\mathcal{Z}(\mathcal{G}))=\mathcal{Z}(A)$ and $\pi_{B}(\mathcal{Z}(\mathcal{G}))=\mathcal{Z}(B);$
\item [{\rm (2)}] either $A$ or $B$ does not contain nonzero central ideals;
\item [{\rm (3)}] if $\alpha a = 0, \alpha\in \mathcal{Z}(\mathcal{G}), 0 \neq a \in \mathcal{G}$, then $\alpha = 0;$
\item [{\rm (4)}] If $MN = 0 = NM$,
            then at least one of the algebras $A$ and $B$ is noncommutative;
\item [{\rm (5)}] every special pair of bimodule homomorphisms has standard form.
\end{enumerate}
Then $\varphi$ is of the form $\varphi=\kappa+\psi$,
where $\kappa$ is an extremal $n$-derivation such that
$\kappa(x_1,x_2,\cdots,x_n) =[x_1,[x_2,[\cdots,[x_n, X_0]\cdots]]]$ for all $x_1, x_2, \cdots, x_n\in \mathcal{G}$, 
$\psi$ is an $n$-linear central-valued mapping vanishing on commutators in each component, 
$X_0=e\varphi(e,e,\cdots,e)f+(-1)^{n}f\varphi(e,e,\cdots,e)e$. 
\end{theorem}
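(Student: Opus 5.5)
Following the strategy announced in Section \ref{xxsec3} (after \cite{Jabeen2022}), I would prove Theorem \ref{xxsec4.1} by induction on $n\geq 3$. The base case $n=3$ is exactly Proposition \ref{xxsec3.1}. For the inductive step, I would assume the statement holds for all $(n-1)$-Lie derivations on $\mathcal{G}$ and fix an $n$-Lie derivation $\varphi$. It is not clear how to restrict $\varphi$ to obtain an $(n-1)$-Lie derivation, so instead I would redo the first reduction directly in $n$ variables, generalizing Proposition \ref{xxsec3.7}.

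\textbf{Step 1: the extremal part.} The computations of Claims 1--4 in Proposition \ref{xxsec3.7} use only the Lie-derivation identity in one slot at a time, with the remaining slots filled by $e$. They therefore extend verbatim to $n$ slots. The only change is that each insertion of an element of $B$ produces a sign $-1$, which is where $(-1)^n$ comes from. Consequently, for $m_0=e\varphi(e,\dots,e)f$ and $n_0=f\varphi(e,\dots,e)e$ one gets $[A,A]m_0=0=m_0[B,B]$, $[B,B]n_0=0=n_0[A,A]$, and $m_0N=Nm_0=0=n_0M=Mn_0$. The last four relations use hypotheses (1) and (2) through the central-ideal argument of (3.10)--(3.17). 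Proposition \ref{xxsec3.5} then gives $[[\mathcal{G},\mathcal{G}],X_0]=0$ for $X_0=m_0+(-1)^nn_0$, and Remark \ref{xxsec3.6} shows that $\kappa(x_1,\dots,x_n)=[x_1,[\dots,[x_n,X_0]\cdots]]$ is an extremal $n$-derivation. Setting $\psi=\varphi-\kappa$, the map $\psi$ is an $n$-Lie derivation, and a check of the signs gives $\psi(e,\dots,e)=e\varphi(e,\dots,e)e+f\varphi(e,\dots,e)f$, which lies in $\mathcal{Z}(\mathcal{G})$ by the $n$-variable analogue of (3.10).

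\textbf{Step 2: centrality of $\psi$ via the induction hypothesis.} Fix $x_n\in\mathcal{G}$ and consider $\psi_{x_n}=\psi(\cdot,\dots,\cdot,x_n)$, which is an $(n-1)$-Lie derivation. By the induction hypothesis, $\psi_{x_n}=\kappa'_{x_n}+\psi'_{x_n}$, where $\kappa'_{x_n}$ is extremal with element $X_0'(x_n)=e\psi(e,\dots,e,x_n)f+(-1)^{n-1}f\psi(e,\dots,e,x_n)e$ and $\psi'_{x_n}$ is central-valued. It then suffices to show $X_0'(x_n)=0$ for every $x_n$. I would check this on Peirce components $x_n\in A\cup B\cup M\cup N$.
\begin{itemize}
\item For $x_n\in A\cup B$, the iterated argument of Lemma \ref{xxsec3.10} gives $\psi(e,\dots,e,x_n)\in\mathcal{Z}(\mathcal{G})$, so its off-diagonal part vanishes.
\item For $x_n\in M$ or $N$, one needs the analogue of Lemma \ref{xxsec3.11}, namely $\psi(e,\dots,e,m)=0$ and $\psi(e,\dots,e,n)=0$.
\end{itemize}
This second case is where hypotheses (3)--(5) enter. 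The map $m\mapsto\psi(e,\dots,e,m)$ and its $N$-counterpart form a special pair of bimodule homomorphisms, so by (5) they are given by some $\theta_0\in\mathcal{Z}(A)$. One then shows $\theta_0=0$ by splitting into two cases.
\begin{itemize}
\item If $MN\neq0$ or $NM\neq0$, use the central-ideal argument together with (3).
\item If $MN=0=NM$, use (4) together with Lemma \ref{xxsec3.2} applied in two of the slots, exactly as at the end of Claim 2 of Lemma \ref{xxsec3.11}.
\end{itemize}
Once $X_0'(x_n)=0$ for all $x_n$, the map $\psi$ is central-valued. The vanishing of $\psi$ on commutators in each slot then follows as at the end of the proof of Proposition \ref{xxsec3.1}: $\psi([x,y],\dots)=[\psi(x,\dots),y]+[x,\psi(y,\dots)]=0$.

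\textbf{Main obstacle.} I expect the hardest part to be Step 2 for $x_n\in M\cup N$: proving $\psi(e,\dots,e,m)=0$ in the degenerate case $MN=0=NM$, where only hypothesis (4) is available. There I would first try to reproduce the (LB) manipulation, treating $\psi(\cdot,\cdot,e,\dots,e,m)$ as a Lie biderivation in its first two slots. A secondary concern is the sign bookkeeping in Step 1, needed to confirm that the extremal element is exactly $X_0=m_0+(-1)^nn_0$.
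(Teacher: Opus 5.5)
Your proposal is correct, but the induction is organized differently from the paper's.

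\textbf{How the paper proceeds.} The paper freezes $x_4,\dots,x_n$, so that $\chi_{x_4,\dots,x_n}(x_1,x_2,x_3)=\varphi(x_1,\dots,x_n)$ is a $3$-Lie derivation. It applies Proposition~\ref{xxsec3.1} to this map, which gives the central-valued remainder at once. The induction hypothesis is used only on the $(n-1)$-Lie derivation $\varphi(e,\cdot,\dots,\cdot)$. There it rewrites the ternary extremal element $e\varphi(e,e,e,x_4,\dots,x_n)f-f\varphi(e,e,e,x_4,\dots,x_n)e$ as $[x_4,[\cdots,[x_n,X_0]\cdots]]$, using $[e,[e,Y]]=Y$ for off-diagonal $Y$ together with (4.2). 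So restricting $\varphi$ is no problem: freezing any slots leaves a map that is still a Lie derivation in each remaining slot, which is exactly what your own Step 2 uses.

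\textbf{Step 1 compared.} Both routes need Claim 4 for the $n$-ary elements $m_0,n_0$. The paper only asserts it (``a careful computation''), while your Step 1 proves it by rerunning the argument with $n$ slots. It can also be read off from the condition $[[\mathcal{G},\mathcal{G}],m_0+(-1)^{n-1}n_0]=0$, which the induction hypothesis supplies for $\varphi(e,\cdot,\dots,\cdot)$. To see this, use that $m=[e,m]$, $n=[n,e]$, $[A,A]$ and $[B,B]$ all lie in $[\mathcal{G},\mathcal{G}]$, and compare Peirce components. Your Step 1 does make the sign check $\kappa(e,\dots,e)=m_0+n_0$ transparent. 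Note that the factor $(-1)^n$ is forced by $[e,n]=-n$, not by insertions of elements of $B$.

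\textbf{Step 2 is much heavier than it needs to be.} The ``main obstacle'' you flag disappears. Since $\psi(e,\dots,e)\in\mathcal{Z}(\mathcal{G})$, apply the induction hypothesis to the map $(x_2,\dots,x_n)\mapsto\psi(e,x_2,\dots,x_n)$. Its extremal element $e\psi(e,\dots,e)f+(-1)^{n-1}f\psi(e,\dots,e)e$ is $0$, so $\psi(e,x_2,\dots,x_n)\in\mathcal{Z}(\mathcal{G})$ for all $x_i$. In particular $X_0'(x_n)=0$ for every $x_n$, with no Peirce case analysis.

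The $n$-variable rerun of Lemmas~\ref{xxsec3.10}--\ref{xxsec3.11} that you describe would also work (special pair, central-ideal argument, and (LB) when $MN=0=NM$). But it only re-proves what the base case already supplies through the induction. With the shortcut, hypotheses (3)--(5) enter your proof only via Proposition~\ref{xxsec3.1}, just as in the paper.
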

\end{tcolorbox}

\begin{proof}
Let us take induction method for the multiplicity $n$. The initial case of $n=3$ is
verified by Proposition \ref{xxsec3.1}. Suppose that this result is valid for all cases with multiplicities $\leq n-1$. 
We will complete this proof of this result by showing that it holds true for the case of multiplicity $n$.

For fixed elements $x_4,\cdots,x_n\in{\mathcal{G}}$, let us define a $3$-linear mapping 
$$
\begin{aligned}
\chi_{x_4,\cdots,x_n}:\mathcal{G}\times\mathcal{G}\times\mathcal{G} & \longrightarrow \mathcal{G}\\
(x_1,x_2,x_3) & \longmapsto \varphi(x_1,x_2,x_3,\cdots,x_n), \, \, \, \forall x_1, x_2, x_3\in \mathcal{G}.
\end{aligned}
$$
It is clear that $\chi_{x_4,\cdots,x_n}(x_1,x_2,x_3)$ is a $3$-Lie derivation. Then for any 
$x_1 x_2, x_3\in \mathcal{G}$, applying Proposition \ref{xxsec3.1} yields 
$$
\begin{aligned}
\varphi(x_1,x_2,x_3,\cdots,x_n) & =\chi_{x_4,\cdots,x_n}(x_1,x_2,x_3)\\
&=[x_1,[x_2,[x_3,e\chi_{x_4,\cdots,x_n}(e,e,e)f+(-1)^{3}f\chi_{x_4,\cdots,x_n}(e,e,e)e]]]\\
&\, \, \, \, \, \, \, \, +\psi_{x_4,\cdots,x_n}(x_1,x_2,x_3)\\
&= [x_1,[x_2,[x_3,e\varphi(e,e,e,x_4,\cdots,x_n)f+(-1)^{3}f\varphi(e,e,e,x_4,\cdots,x_n)e]]]\\
& \, \, \, \, \, \, \, \, +\psi_{x_4,\cdots,x_n}(x_1,x_2,x_3).
\end{aligned}
$$
It should be remarked that $\varphi(x_1, x_2, x_3, \cdots, x_n)$ and 
$[x_1,[x_2,[x_3,e\varphi(e,e,e,x_4,\cdots,x_n)f+(-1)^{3}f\varphi(e,e,e,x_4, $ $\cdots,x_n)e]]]$ 
are both $n$-linear mappings. Let us define their difference to be $\psi(x_1, x_2, x_3, \cdots, x_n)$, which 
is an $n$-linear mapping. That is 
$$
\begin{aligned}
\varphi(x_1,x_2,x_3,\cdots,x_n)&= \chi_{x_4,\cdots,x_n}(x_1,x_2,x_3)\\
&= [x_1,[x_2,[x_3,e\chi_{x_4,\cdots,x_n}(e,e,e)f+(-1)^{3}f\chi_{x_4,\cdots,x_n}(e,e,e)e]]]\\
&\, \, \, \, \, \, \, \, +\psi_{x_4,\cdots,x_n}(x_1,x_2,x_3)\\
& \eqdef [x_1,[x_2,[x_3,e\varphi(e,e,e,x_4,\cdots,x_n)f+(-1)^{3}f\varphi(e,e,e,x_4,\cdots,x_n)e]]]\\
& \, \, \, \, \, \, \, +\psi(x_1,x_2,x_3,\cdots,x_n), 
\end{aligned}
$$
where $\psi_{x_4,\cdots,x_n}(x_1,x_2,x_3)$ is a $3$-linear central-valued mapping
and $\psi_{x_4,\cdots,x_n}(x_1,x_2,x_3)=\psi(x_1,x_2,x_3,x_4, $ $\cdots,x_n)$.

In addition, by induction assumption for $n-1$, it is clear that
$\varphi(e,x_2,x_3,\cdots,x_n)$ is an $(n-1)$-Lie derivation for all $x_2,\cdots,x_n\in{\mathcal{G}}$. This implies that
$$
\begin{aligned}
\varphi(e,x_2,x_3,\cdots,x_n)=&[x_2,[x_3,\cdots,[x_n,e\varphi(e,\cdots,e)f+(-1)^{n-1}f\varphi(e,\cdots,e)e]\cdots]]\\
&+\psi(e,x_2,x_3,x_4,\cdots,x_n)
\end{aligned}
$$
for all $x_2,\cdots,x_n\in{\mathcal{G}}$. In particular,
$$
\begin{aligned}
\varphi(e,e,e,x_4,\cdots,x_n)=&[e,[e,[x_4\cdots,[x_n,e\varphi(e,\cdots,e)f+(-1)^{n-1}f\varphi(e,\cdots,e)e]\cdots]]]\\
&+\psi(e,e,e,x_4,\cdots,x_n)\\
=&[e,[e,[x_4,[x_5,\cdots,[x_n,e\varphi(e,\cdots,e)f]\cdots]]]]\\
&+[e,[e,[x_4,[x_5,\cdots,[x_n,(-1)^{n-1}f\varphi(e,\cdots,e)e]\cdots]]]]\\
&+\psi(e,e,e,x_4,\cdots,x_n)\\
\stackrel{\Delta}{=}&[x_4,[x_5,\cdots,[x_n,e\varphi(e,\cdots,e)f]\cdots]]\\
&+[x_4,[x_5,\cdots,[x_n,(-1)^{n-1}f\varphi(e,\cdots,e)e]\cdots]]\\
&+\psi(e,e,e,x_4,\cdots,x_n)\\
=&[x_4,[x_5,\cdots,[x_n,e\varphi(e,\cdots,e)f+(-1)^{n-1}f\varphi(e,\cdots,e)e]\cdots]]\\
&+\psi(e,e,e,x_4,\cdots,x_n)
\end{aligned}\eqno{(4.1)}
$$
for all $x_4,\cdots,x_n\in{\mathcal{G}}$. Here, the verification for the equality $\stackrel{\Delta}{=}$ is due to the following fact:
$$
[x,m_0]=e[x,m_0]f ~\text{and}~  [x,n_0]=f[x,n_0]e, \, \, \, \, \, \, \forall x\in \mathcal{G},                           \eqno{(4.2)}
$$
where $m_0=e\varphi(e, \cdots, e)f\in M, n_0=f\varphi(e, \cdots, e)e\in N$.
Indeed, a careful computation shows that the elements $m_0$ and $ n_0$ satisfy the conclusion of {\bf Claim 4} 
in Proposition \ref{xxsec3.6}. That is, $m_0N = Nm_0 = 0$ and $n_0M = Mn_0 = 0$.  Then for any $x=\left[
\begin{array}
[c]{cc}%
a & m\\
n & b\\
\end{array}
\right]\in \mathcal{G}$, we see that
$$
\begin{aligned}\left[\left[
\begin{array}
[c]{cc}%
a & m\\
n & b\\
\end{array}
\right],\left[
\begin{array}
[c]{cc}%
0 & 0\\
n_0 & 0\\
\end{array}
\right]\right]=&\left[
\begin{array}
[c]{cc}%
mn_0 & 0\\
n_0a-bn_0 & n_0m\\
\end{array}
\right]=\left[
\begin{array}
[c]{cc}%
0 & 0\\
n_0a-bn_0 & 0\\
\end{array}
\right]\\
=&f\left(\left[\left[
\begin{array}
[c]{cc}%
a & m\\
n & b\\
\end{array}
\right],\left[
\begin{array}
[c]{cc}%
0 & 0\\
n_0 & 0\\
\end{array}
\right]\right]\right)e
\end{aligned}$$
and that 
$$
\begin{aligned}
\left[\left[
\begin{array}
[c]{cc}%
a & m\\
n & b\\
\end{array}
\right],\left[
\begin{array}
[c]{cc}%
0 & m_0\\
0 & 0\\
\end{array}
\right]\right]=&\left[
\begin{array}
[c]{cc}%
m_0n & am_0-m_0b\\
0 & nm_0\\
\end{array}
\right]=\left[
\begin{array}
[c]{cc}%
0 & am_0-m_0b\\
0 & 0\\
\end{array}
\right]\\=&e\left(\left[\left[
\begin{array}
[c]{cc}%
a & m\\
n & b\\
\end{array}
\right],\left[
\begin{array}
[c]{cc}%
0 & m_0\\
0 & 0\\
\end{array}
\right]\right]\right)f.
\end{aligned}
$$
It follows from (4.2) that 
$$
e\varphi(e,e,e,x_4,\cdots,x_n)f=[x_4,[x_5,\cdots,[x_n,e\varphi(e,\cdots,e)f]\cdots]]         
\eqno{(4.3)}
$$
and
$$
f\varphi(e,e,e,x_4,\cdots,x_n)e=[x_4,[x_5,\cdots,[x_n,(-1)^{n-1}f\varphi(e,\cdots,e)e]\cdots]]    
\eqno{(4.4)}
$$
In light of the relations (4.3) and (4.4), we have
$$
\begin{aligned}
\varphi(x_1,x_2,x_3,\cdots,x_n)=& [x_1,[x_2,[x_3,e\varphi(e,e,e,x_4,\cdots,x_n)f+(-1)^{3}f\varphi(e,e,e,x_4,\cdots,x_n)e]]]\\
&+\psi(x_1,x_2,x_3,x_4,\cdots,x_n)\\
=&[x_1,[x_2,[x_3,e\varphi(e,e,e,x_4,\cdots,x_n)f]]]\\
 &+[x_1,[x_2,[x_3,(-1)^{3}f\varphi(e,e,e,x_4,\cdots,x_n)e]]]\\
&+\psi(x_1,x_2,x_3,x_4,\cdots,x_n)\\
=&[x_1,[x_2,[x_3,[x_4,[x_5,\cdots,[x_n,e\varphi(e,\cdots,e)f]\cdots]]]]]\\
 &+[x_1,[x_2,[x_3,[x_4,[x_5,\cdots,[x_n,(-1)^{n}f\varphi(e,\cdots,e)e]\cdots]]]]]\\
&+\psi(x_1,x_2,x_3,x_4,\cdots,x_n)\\
=&[x_1,[x_2,[x_3,[x_4,[x_5,\cdots,[x_n,e\varphi(e,\cdots,e)f+(-1)^{n}f\varphi(e,\cdots,e)e]\cdots]\cdots]]]]]\\
&+\psi(x_1,x_2,x_3,x_4,\cdots,x_n)\\
\end{aligned}\eqno{(4.5)}
$$
for all $x_1,\cdots,x_n\in \mathcal{G}$. This implies that
$$
\varphi(x_1,x_2,\cdots,x_n)=[x_1,[x_2,\cdots,[x_n,e\varphi(e,\cdots,e)f+(-1)^{n}f\varphi(e,\cdots,e)e]\cdots]]+\psi(x_1,x_2, \cdots,x_n)
$$
 for all $x_1,\cdots,x_n\in{\mathcal{G}}$. 
 Obviously, the elements $m_0$ and $n_0$ satisfy the conclusion of {\bf Claim 4} in Proposition \ref{xxsec3.7}. 
 Considering Proposition \ref{xxsec3.5}, we know that $e\varphi(e,\cdots,e)f+(-1)^{n}f\varphi(e,\cdots,e)e$ satisfies the equation 
 $$
 [[\mathcal{G}, \mathcal{G}], e\varphi(e,\cdots,e)f+(-1)^{n}f\varphi(e,\cdots,e)e]=0.
 $$
 Furthermore, with help of Remark \ref{xxsec3.6} we see that 
$[x_1, [x_2, \cdots,  [x_n, e\varphi(e, \cdots, e)f+(-1)^n f\varphi(e,\cdots, e)e] \cdots ]]$ 
is an extremal $n$-derivation.  
 
 Thus 
 $$
 \psi(x_1, x_2, \cdots, x_n)=\varphi(x_1, x_2, \cdots, x_n)- [x_1, [x_2, \cdots, , [x_n,  e\varphi(e, \cdots, e)f+(-1)^n f\varphi (e, \cdots, e)e] \cdots ]]
 $$ 
 is an $n$-Lie derivation which is also a central-valued mapping. With respect to the first component, 
 we have 
$$
\psi([x,y], ,x_2,\cdots,x_n)= [[\psi(x, ,x_2,\cdots,x_n),y] + [x, \psi(y, ,x_2,\cdots,x_n)] =0
$$
for all  for any $x, y, x_2, \dots, x_n \in G$. Hence, the $n$-linear central-valued mapping 
$\psi: \underbrace{G \times \cdots \times G}_n \longrightarrow \mathcal{G}$ vanishes on 
an arbitrary commutator $[x, y]$ in the first component. By the same computational approach and procedure, 
it can be shown that $\psi$ also vanishes on all commutators in the $i$-th\, ($2 \leq i \leq n$) component. 
We therefore conclude that the $n$-linear central-valued mapping $\psi$ vanishes on all commutators 
in every component. We eventually finish the proof of the first main theorem.
\end{proof}

As a consequence of Theorem \ref {xxsec4.1} and Lemma \ref{xxsec3.4} we have 
the following corollary.

\begin{tcolorbox}[breakable, blanker,left=3mm,right=3mm,
borderline west={2pt}{0pt}{blue}]

\begin{corollary}\label{xxsec4.2}
Let $\mathcal{G}=\left[
\begin{array}
[c]{cc}%
A & M\\
N & B\\
\end{array}
\right]$ be a generalized matrix algebra over a commutative ring $\mathcal{R}$
and $\varphi:\underbrace{\mathcal{G}\times \cdots\times \mathcal{G}}_n\longrightarrow\mathcal{G}$ be an
$n$-Lie derivation on $\mathcal{G}\, \, (n\geq 3)$ Suppose that
\begin{enumerate}
\item[{\rm (1)}] $\pi_{A}(\mathcal{Z}(\mathcal{G}))=\mathcal{Z}(A)$ and $\pi_{B}(\mathcal{Z}(\mathcal{G}))=\mathcal{Z}(B);$
\item[{\rm (2)}] either $A$ or $B$ does not contain nonzero central ideals;
\item[{\rm (3)}] if $\alpha a = 0, \alpha\in \mathcal{Z}(\mathcal{G}), 0 \neq a \in \mathcal{G},$ then $\alpha = 0;$
\item[{\rm (4)}] if $MN =0=NM$, then at least one of the algebras $A$ and $B$ is noncommutative;
\item [{\rm (5)}] every derivation on $\mathcal{G}$ is inner.
\end{enumerate}
Then $\varphi$ is of the form $\varphi=\kappa+\psi$, where $\kappa$ is an extremal $n$-derivation such that
$\kappa(x_1,x_2,\cdots,x_n) =[x_1,[x_2,[\cdots,[x_n, X_0]\cdots]]]$ for all $x_1, x_2, \cdots, x_n\in \mathcal{G}$, 
$\psi$ is an $n$-linear central-valued mapping vanishing on commutators in each component, 
$X_0=e\varphi(e,e,\cdots,e)f+(-1)^{n}f\varphi(e,e,\cdots,e)e$. 
\end{corollary}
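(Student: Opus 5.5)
The plan is to deduce Corollary \ref{xxsec4.2} directly from Theorem \ref{xxsec4.1}. Hypotheses (1)--(4) of the corollary are word for word the hypotheses (1)--(4) of Theorem \ref{xxsec4.1}. So the only thing to check is that hypothesis (5) of the corollary, namely that every derivation of $\mathcal{G}$ is inner, implies hypothesis (5) of the theorem, namely that every special pair of bimodule homomorphisms has the standard form. This is exactly the content of Lemma \ref{xxsec3.4} (Du--Wang). Indeed, given a special pair $(\mathcal{F},\mathcal{E})$, the map
$$
\left[\begin{array}{cc} a & m\\ n & b\end{array}\right]\longmapsto \left[\begin{array}{cc} 0 & \mathcal{F}(m)\\ \mathcal{E}(n) & 0\end{array}\right]
$$
is a derivation of $\mathcal{G}$. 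The compatibility conditions $\mathcal{F}(m)n+m\mathcal{E}(n)=0=n\mathcal{F}(m)+\mathcal{E}(n)m$ are precisely what makes the diagonal terms vanish. Writing this derivation as $\operatorname{ad}_T$ for some $T\in\mathcal{G}$ and reading off the Peirce components then produces central elements $\omega_0\in\mathcal{Z}(A)$ and $\omega_1\in\mathcal{Z}(B)$ giving the standard form.

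With this established, the steps are as follows. First, I fix an $n$-Lie derivation $\varphi$ with $n\geq 3$ on a $\mathcal{G}$ satisfying (1)--(5) of Corollary \ref{xxsec4.2}. Second, I invoke Lemma \ref{xxsec3.4} to conclude that every special pair has the standard form. Third, I observe that $\mathcal{G}$ now satisfies all of (1)--(5) of Theorem \ref{xxsec4.1}. Finally, I apply that theorem, which gives $\varphi=\kappa+\psi$ with
$$
\kappa(x_1,\dots,x_n)=[x_1,[x_2,\cdots,[x_n,X_0]\cdots]], \qquad X_0=e\varphi(e,\dots,e)f+(-1)^nf\varphi(e,\dots,e)e,
$$
where $\psi$ is central-valued and vanishes on commutators in each component. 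This mirrors how Corollary \ref{xxsec3.16} was obtained from Proposition \ref{xxsec3.1}.

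I do not expect a genuine obstacle here, since the heavy lifting is carried by Theorem \ref{xxsec4.1}. The one point that deserves care is that Lemma \ref{xxsec3.4} is quoted from \cite{DuWang3} under the standing assumptions of this paper, namely that $M$ is faithful and $\mathcal{R}$ is $2$-torsionfree. If one wanted to be self-contained, the Peirce-component argument sketched above is short enough to include. Its only subtle step is extracting centrality of the diagonal entries of $T$ from the fact that $\operatorname{ad}_T$ kills $A$ and $B$.
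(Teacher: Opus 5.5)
Your proposal is correct and follows the paper's argument exactly: the paper also obtains Corollary~\ref{xxsec4.2} by using Lemma~\ref{xxsec3.4}, which says that if every derivation is inner then every special pair has standard form, and then applying Theorem~\ref{xxsec4.1}. Your self-contained sketch of Lemma~\ref{xxsec3.4} is also sound, and it uses neither the faithfulness of $M$ nor $2$-torsionfreeness: once the derivation is written as a commutator with some $T\in\mathcal{G}$, the condition that it vanishes at $e$ already forces the off-diagonal entries of $T$ to be zero.
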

\end{tcolorbox}

The following constitutes another significant theorem of this article:

\begin{tcolorbox}[breakable, blanker,left=3mm,right=3mm,
borderline west={2pt}{0pt}{red}]

\begin{theorem}\label{xxsec4.3}
Let $\mathcal{G}=\left[
\begin{array}
[c]{cc}%
A & M\\
N & B\\
\end{array}
\right]$ be a generalized matrix algebra over a commutative ring $\mathcal{R}$
and $\varphi:\underbrace{\mathcal{G}\times \cdots \times \mathcal{G}}_n\longrightarrow\mathcal{G}$ be an
$n$-Lie derivation on $\mathcal{G}\, \, (n\geq 3)$. Suppose that
\begin{enumerate}
\item[{\rm (1)}] $\pi_{A}(\mathcal{Z}(\mathcal{G}))=\mathcal{Z}(A)$ and $\pi_{B}(\mathcal{Z}(\mathcal{G}))=\mathcal{Z}(B);$
\item[{\rm (2)}] either $A$ or $B$ does not contain nonzero central ideals;
\item[{\rm (3)}] For each $n\in N$, the condition $Mn = 0 = n M$ implies $n=0$;
\item[{\rm (4)}] For each $m\in M$, the condition $Nm = 0 = m N$ implies $m=0$;
\item[{\rm (5)}] each special pair of bimodule homomorphisms has standard form.
\end{enumerate}
Then $\varphi$ has the form $\varphi=\kappa+\psi$, where $\kappa$ is an extremal $n$-derivation such that
$\kappa(x_1,x_2,\cdots,x_n) =[x_1,[x_2,[\cdots,[x_n, X_0]\cdots]]]$ for all $x_1, x_2, \cdots, x_n\in \mathcal{G}$, 
$\psi$ is an $n$-linear central-valued mapping vanishing on commutators in each component, 
$X_0=e\varphi(e,e,\cdots,e)f+(-1)^{n}f\varphi(e,e,\cdots,e)e$. 
\end{theorem}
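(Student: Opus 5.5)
Theorem \ref{xxsec4.3} will be proved by the same induction on $n$ used for Theorem \ref{xxsec4.1}, with Proposition \ref{xxsec3.17} replacing Proposition \ref{xxsec3.1} wherever the $3$-Lie case is needed. The base case $n=3$ is exactly Proposition \ref{xxsec3.17}; its hypotheses (1), (2), (5) match those of Theorem \ref{xxsec4.3}, and its hypotheses (3)--(4) are the same two annihilator conditions, listed in the other order. Assume the statement holds for all multiplicities at most $n-1$. Fix $x_4,\dots,x_n\in\mathcal{G}$ and set $\chi_{x_4,\dots,x_n}(x_1,x_2,x_3)=\varphi(x_1,\dots,x_n)$. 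This is a $3$-Lie derivation, so Proposition \ref{xxsec3.17} gives
$$\varphi(x_1,\dots,x_n)=[x_1,[x_2,[x_3,e\varphi(e,e,e,x_4,\dots,x_n)f-f\varphi(e,e,e,x_4,\dots,x_n)e]]]+\psi(x_1,\dots,x_n),$$
where $\psi$ takes central values. It is $n$-linear because it is the difference of two $n$-linear maps.

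Next we identify the Peirce corners of $\varphi(e,e,e,x_4,\dots,x_n)$. The map $(x_2,\dots,x_n)\mapsto\varphi(e,x_2,\dots,x_n)$ is an $(n-1)$-Lie derivation. By the induction hypothesis it equals $[x_2,[\cdots,[x_n,m_0+(-1)^{n-1}n_0]\cdots]]$ plus a central-valued term, where $m_0=e\varphi(e,\dots,e)f$ and $n_0=f\varphi(e,\dots,e)e$. To evaluate at $x_2=x_3=e$ we need $m_0N=Nm_0=0$ and $n_0M=Mn_0=0$. These follow by rerunning Claim 4 of Proposition \ref{xxsec3.7} for the $n$-fold map with all but one slot equal to $e$; that argument used only hypothesis (2), faithfulness of $M$, and Claims 2--3, all of which carry over. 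From these identities we get $[x,m_0]\in M$ and $[x,n_0]\in N$, as in (4.2). Iterating, $[x_4,\dots,[x_n,m_0]\cdots]\in M$ and the corresponding $n_0$-expression lies in $N$. Since $[e,[e,y]]=y$ for $y\in M\cup N$ and the central part has no off-diagonal corners, we obtain (4.3) and (4.4):
$$e\varphi(e,e,e,x_4,\dots)f=[x_4,\cdots[x_n,m_0]\cdots],\qquad f\varphi(e,e,e,x_4,\dots)e=(-1)^{n-1}[x_4,\cdots[x_n,n_0]\cdots].$$

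Substituting these into the formula above gives $\varphi=\kappa+\psi$ with $X_0=m_0+(-1)^nn_0$. By Claim 4 together with Proposition \ref{xxsec3.5}, $[[\mathcal{G},\mathcal{G}],X_0]=0$, so Remark \ref{xxsec3.6} shows $\kappa$ is an extremal $n$-derivation. Then $\psi=\varphi-\kappa$ is an $n$-Lie derivation with central values, so in each slot $\psi([x,y],\dots)=[\psi(x,\dots),y]+[x,\psi(y,\dots)]=0$.

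The main obstacle is justifying Claim 4 for $m_0$ and $n_0$ at multiplicity $n$ under the present hypotheses. In Proposition \ref{xxsec3.7} it was derived only for $3$-linear maps, using hypothesis (2). I would handle this by applying Proposition \ref{xxsec3.7} to the $3$-Lie derivation $\chi_{e,\dots,e}$; its $m_0,n_0$ are precisely $e\varphi(e,\dots,e)f$ and $f\varphi(e,\dots,e)e$, so Claim 4 transfers directly. Alternatively, hypotheses (3) and (4) of Theorem \ref{xxsec4.3} give the annihilation conditions once $m_0N$ and $Nm_0$ are shown to vanish, by the same route as Lemma \ref{xxsec3.22}. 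A secondary point is the sign bookkeeping: $[x_3,-(-1)^{n-1}(\cdot)]=[x_3,(-1)^n(\cdot)]$, which produces the $(-1)^n$ in $X_0$.
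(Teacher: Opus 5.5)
Your proposal is correct and matches the paper's intended argument: the paper states Theorem~\ref{xxsec4.3} without proof, the implied proof being the induction of Theorem~\ref{xxsec4.1} with Proposition~\ref{xxsec3.17} as the base case. Your derivation of Claim~4 for $m_0=e\varphi(e,\dots,e)f$, $n_0=f\varphi(e,\dots,e)e$ by applying Proposition~\ref{xxsec3.7} to the $3$-Lie derivation $\chi_{e,\dots,e}$ is a clean way to supply the step the paper only asserts. Your ``alternatively'' sentence is garbled, though: hypotheses (3)--(4) cannot produce the conditions $m_0N=Nm_0=0$, $n_0M=Mn_0=0$; rather, once Claim~4 supplies them, (3)--(4) force $m_0=n_0=0$, so under this theorem's hypotheses in fact $X_0=0$ and $\kappa\equiv 0$.
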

\end{tcolorbox}

In view of \cite[Propositions 3.3 and 3.4]{DuWang3} and Corollary \ref{xxsec3.22},
we now get

\begin{tcolorbox}[breakable, blanker,left=3mm,right=3mm,
borderline west={2pt}{0pt}{blue}]

\begin{corollary}\label{xxsec4.4}
Let $\mathcal{G}=\left[
\begin{array}
[c]{cc}%
A & M\\
N & B\\
\end{array}
\right]$ be a generalized matrix algebra over a commutative ring $\mathcal{R}$
and $\varphi:\underbrace{\mathcal{G}\times \cdots\times \mathcal{G}}_n\longrightarrow\mathcal{G}$
be an $n$-Lie derivation on $\mathcal{G}\, \, (n\geq 3)$. Suppose that
\begin{enumerate}
\item[{\rm (1)}] $\pi_{A}(\mathcal{Z}(\mathcal{G}))=\mathcal{Z}(A)$ and $\pi_{B}(\mathcal{Z}(\mathcal{G}))=\mathcal{Z}(B);$
\item[{\rm (2)}] either $A$ or $B$ does not contain nonzero central ideals;
\item[{\rm (3)}] For each $n\in N$, the condition $Mn = 0 = nM$ implies that $n=0$;
\item[{\rm (4)}] For each $m\in M$, the condition $Nm = 0 = m N$ implies that $m=0$;
\item[{\rm (5)}] each derivation on $\mathcal{G}$ is inner.
\end{enumerate}
Then $\varphi$ is of the form $\varphi=\kappa+\psi$, where $\kappa$ is an extremal $n$-derivation such that
$\kappa(x_1,x_2,\cdots,x_n) =[x_1,[x_2,[\cdots,[x_n, X_0]\cdots]]]$ for all $x_1, x_2, \cdots, x_n\in \mathcal{G}$, 
$\psi$ is an $n$-linear central-valued mapping vanishingon commutators in each component, 
$X_0=e\varphi(e,e,\cdots,e)f+(-1)^{n}f\varphi(e,e,\cdots,e)e$. 
\end{corollary}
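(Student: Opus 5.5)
The corollary should follow by reducing it to Theorem \ref{xxsec4.3}. The only hypothesis that differs is (5): here every derivation of $\mathcal{G}$ is inner, whereas Theorem \ref{xxsec4.3} asks that every special pair of bimodule homomorphisms have standard form. So the plan is to show that hypothesis (5) of the corollary implies hypothesis (5) of Theorem \ref{xxsec4.3}, and then apply the theorem verbatim.

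For this I will use Lemma \ref{xxsec3.4}, which is \cite[Proposition 3.4]{DuWang3}. If every derivation on $\mathcal{G}$ is inner, then every special pair $(\mathcal{F},\mathcal{E})$ of bimodule homomorphisms $\mathcal{F}:M\to M$, $\mathcal{E}:N\to N$ has the standard form. This uses no other hypotheses, so condition (5) of Theorem \ref{xxsec4.3} holds. Conditions (1)--(4) of the corollary coincide literally with (1)--(4) of Theorem \ref{xxsec4.3}. The theorem then gives $\varphi=\kappa+\psi$, with $\kappa(x_1,\dots,x_n)=[x_1,[x_2,\dots,[x_n,X_0]\cdots]]$ an extremal $n$-derivation, $X_0=e\varphi(e,\dots,e)f+(-1)^nf\varphi(e,\dots,e)e$, and $\psi$ central-valued and vanishing on commutators in each component.

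As a fallback, in case one wants to avoid Lemma \ref{xxsec3.4} and use Lemma \ref{xxsec3.3} (\cite[Proposition 3.3]{DuWang3}) instead, I would argue as follows. Hypothesis (3) of the corollary supplies condition (1) of Lemma \ref{xxsec3.3}. It then remains to show that every $(A,B)$-bimodule endomorphism $\mathcal{F}$ of $M$ has standard form. To do this, I would build the derivation $d\left(\begin{smallmatrix} a&m\\ n&b\end{smallmatrix}\right)=\left(\begin{smallmatrix} 0&\mathcal{F}(m)\\ 0&0\end{smallmatrix}\right)$. This map is a derivation only if $\mathcal{F}(m)n=0=n\mathcal{F}(m)$, which need not hold, so the construction is delicate. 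Since Lemma \ref{xxsec3.4} covers exactly this situation, I would rely on it, and this step is the only potential obstacle.
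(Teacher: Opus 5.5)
Your proposal is correct and follows the paper's argument: the corollary comes from combining Theorem~\ref{xxsec4.3} with Lemma~\ref{xxsec3.4} (Du--Wang's Proposition 3.4), which turns ``every derivation of $\mathcal{G}$ is inner'' into ``every special pair has standard form'' without further hypotheses, while conditions (1)--(4) carry over verbatim. The paper's one-line justification also cites Du--Wang's Proposition 3.3, but, as your discussion of the fallback indicates, that is unnecessary once Lemma~\ref{xxsec3.4} is used.
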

\end{tcolorbox}

As mentioned in Section \ref{xxsec2}, both full matrix algebras and triangular matrix algebras
are classic examples of generalized matrix algebras. Applying our main theorems to these backgrounds 
obtains the following corollaries.

\begin{tcolorbox}[breakable, blanker,left=3mm,right=3mm,
borderline west={2pt}{0pt}{blue}]

\begin{corollary}\label{xxsec4.5}
Let $M_{r\times r}(\mathcal{R})$ be the algebra of all $r\times r$ matrices
over a commutative ring $\mathcal{R}$, where $r>2$.
Then every $n$-Lie derivation on $M_{r\times r}(\mathcal{R})\, \, (n\geq 3)$ can be uniquely  decomposed into the sum of
an extremal $n$-derivation and an $n$-linear central-valued mapping vanishing on commutators in each component. 
\end{corollary}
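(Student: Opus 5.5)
We plan to realize $M_{r\times r}(\mathcal{R})$ as a generalized matrix algebra and then invoke Corollary \ref{xxsec4.4} (or Theorem \ref{xxsec4.3}). Take $e=e_{11}$ and $f=I-e_{11}$. Then $A=\mathcal{R}$, $M=M_{1\times(r-1)}(\mathcal{R})$, $N=M_{(r-1)\times 1}(\mathcal{R})$ and $B=M_{(r-1)\times(r-1)}(\mathcal{R})$, with the pairings given by ordinary matrix multiplication. $M$ is faithful as a left $A$-module and as a right $B$-module: $aM=0$ forces $a=0$ by looking at $ae_{12}$, and $Mb=0$ forces every row of $b$ to vanish.

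We check the hypotheses of Corollary \ref{xxsec4.4} in turn.
\begin{enumerate}
\item[(1)] Since $\mathcal{Z}(M_r(\mathcal{R}))=\mathcal{R}I$, we get $\pi_A(\mathcal{Z}(\mathcal{G}))=\mathcal{R}=\mathcal{Z}(A)$ and $\pi_B(\mathcal{Z}(\mathcal{G}))=\mathcal{R}I_{r-1}=\mathcal{Z}(B)$.
\item[(2)] Because $r>2$, $B=M_{r-1}(\mathcal{R})$ with $r-1\geq 2$ contains no nonzero central ideal. Indeed, if $J$ is a central ideal and $0\neq\lambda I\in J$, then $\lambda e_{11}=(\lambda I)e_{11}\in J$. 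But $\lambda e_{11}$ is central, so $\lambda e_{11}$ commutes with $e_{12}$, and this gives $\lambda=0$.
\item[(3)] If $nM=0=Mn$, then $n=0$, by testing $n$ against the matrix units $e_{1j}$.
\item[(4)] The same argument applies to $m$.
\item[(5)] Every derivation of $M_r(\mathcal{R})$ over a commutative ring is inner. This is the classical fact that derivations of Azumaya algebras, and in particular of matrix algebras, are inner. Alternatively, a direct matrix-unit computation shows that $D-\operatorname{ad}(\sum_i D(e_{i1})e_{1i})$ vanishes on matrix units, hence is $\mathcal{R}$-linear and zero.
\end{enumerate}
Corollary \ref{xxsec4.4} then gives $\varphi=\kappa+\psi$ of the required form.

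Uniqueness is the remaining point, and we expect it to be the main obstacle, since the theorems give existence only. Suppose $\kappa+\psi=\kappa'+\psi'$ with $\kappa,\kappa'$ extremal $n$-derivations defined by $X_0$ and $X_0'$. Then $\kappa-\kappa'$ is given by $X_0-X_0'$ and takes central values. The key observation is that in $M_r(\mathcal{R})$ the condition $[[\mathcal{G},\mathcal{G}],X_0]=0$ forces $X_0$ to be central. Indeed, $[\mathcal{G},\mathcal{G}]$ contains all $e_{ij}$ with $i\neq j$ and all $e_{ii}-e_{jj}$, and commuting with all of these forces $X_0\in\mathcal{R}I$. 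Consequently every extremal $n$-derivation is identically zero, and the decomposition is trivially unique with $\kappa=0$ and $\varphi=\psi$.

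This observation makes the corollary somewhat degenerate: its content is that $\varphi$ is central-valued and vanishes on commutators. It is also consistent with the definition's requirement $x_0\notin\mathcal{Z}$, under which no extremal derivation exists at all. So uniqueness will be handled by showing directly that $X_0=e\varphi(e,\ldots,e)f+(-1)^n f\varphi(e,\ldots,e)e$ is zero. Here Claim 4 gives $m_0N=0$, and for $M_r(\mathcal{R})$ this yields $m_0=0$; similarly $n_0=0$.
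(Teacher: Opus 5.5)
Your proposal is correct and follows the paper's route. The paper obtains this corollary simply by applying its main theorems to $M_{r\times r}(\mathcal{R})$ viewed as a generalized matrix algebra, and your check of the hypotheses of Corollary \ref{xxsec4.4} supplies exactly the details the paper omits; Corollary \ref{xxsec4.4} is also the right one to use, since hypothesis (3) of Corollary \ref{xxsec4.2} fails as soon as $\mathcal{R}$ has zero divisors.

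Your treatment of uniqueness goes beyond the paper, which asserts uniqueness without argument, and it is sound. Because $[\mathcal{G},\mathcal{G}]$ contains every $e_{ij}$ with $i\neq j$, any admissible $X_0$ is scalar, so every extremal $n$-derivation of $M_{r\times r}(\mathcal{R})$ is zero and the decomposition reduces to $\varphi=\psi$. The final appeal to Claim 4 is unnecessary: $X_0$ is both off-diagonal and scalar, hence already $0$.
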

\end{tcolorbox}

\begin{tcolorbox}[breakable, blanker,left=3mm,right=3mm,
borderline west={2pt}{0pt}{blue}]

\begin{corollary}\label{xxsec4.6}
Let $\mathcal{T}$ be a triangular algebra over a commutative ring $\mathcal{R}$ and
$\varphi:\underbrace{\mathcal{T}\times\cdots \times \mathcal{T}}_n\longrightarrow\mathcal{T}$ be an
$n$-Lie derivation on $\mathcal{T}\, \, (n\geq 3)$. Suppose that
\begin{enumerate}
\item [{\rm (1)}] $\pi_A(\mathcal{Z}(\mathcal{T}))=\mathcal{Z}(A)$ and $\pi_B(\mathcal{Z}(\mathcal{T}))=\mathcal{Z}(B)$;
\item [{\rm (2)}] either $A$ or $B$ does not contain nonzero central ideals;
\item [{\rm (3)}] each derivation on $\mathcal{T}$ is inner.
\end{enumerate}
Then $\varphi$ has the form $\varphi=\kappa+\psi$, where $\kappa$ is an extremal $n$-derivation such that
$\kappa(x_1,x_2,\cdots,x_n) =[x_1,[x_2,\cdots,[x_n, e\varphi(e,e,\cdots,e)f]]]$ and 
$\psi$ is an $n$-linear central-valued mapping vanishing on commutators in each component.
\end{corollary}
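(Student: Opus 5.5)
The plan is to realize the triangular algebra $\mathcal{T}=\mathcal{T}(A,M,B)$ as the generalized matrix algebra $\mathcal{G}=\left[\begin{array}{cc} A & M\\ N & B\end{array}\right]$ with $N=0$, and to invoke Corollary \ref{xxsec4.2} (equivalently Theorem \ref{xxsec4.1} together with Lemma \ref{xxsec3.4}). The task is therefore to check that the five hypotheses of Corollary \ref{xxsec4.2} are either assumed or automatic for triangular algebras. After that, we simplify $X_0$.

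Hypotheses (1), (2) and (5) of Corollary \ref{xxsec4.2} are exactly hypotheses (1), (2), (3) of the present statement. For hypothesis (5) of Theorem \ref{xxsec4.1}, I would note that Lemma \ref{xxsec3.4} needs only that every derivation is inner. Alternatively, with $N=0$ a special pair $(\mathcal{F},\mathcal{E})$ has $\mathcal{E}=0$, and the compatibility condition is vacuous. The real issue is then that every $(A,B)$-bimodule endomorphism $\mathcal{F}$ of $M$ is standard. This follows because the map $d(a+m+b)=\mathcal{F}(m)$ is a derivation of $\mathcal{T}$, hence inner, and comparing Peirce components gives $\mathcal{F}(m)=\omega_0 m+m\omega_1$ with central $\omega_0\in\mathcal{Z}(A)$, $\omega_1\in\mathcal{Z}(B)$.

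The remaining hypotheses, (3) (no zero divisors among central elements) and (4) (if $MN=0=NM$, then one of $A$, $B$ is noncommutative), are the main obstacle. With $N=0$ we always have $MN=0=NM$, so (4) is not automatic. I would avoid needing (4) and (3) in full by going back into the proof of Proposition \ref{xxsec3.1}, where they are used only to kill $\theta_0$, $\alpha_0$ and $\gamma$.

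In the triangular case the relevant bimodule maps are simpler. For $\xi(m)=\psi(e,e,m)=\theta_0 m$ I would use instead the identity $\psi(e,e,m)=\psi(e,e,[e,m])=[\psi(e,e,e),m]+[e,\psi(e,e,m)]$. Since $\psi(e,e,e)$ is central, this gives only $\psi(e,e,m)\in M$, so I would combine it with the identity obtained from $m=[m,f]$ in the same slot. More promisingly, I would use the first slot: $\psi([e,m],e,e)=[\psi(e,e,e),m]+[e,\psi(m,e,e)]$, together with the centrality statements of Lemma \ref{xxsec3.10}, to show that the $M$-valued pieces of $\psi$ coincide with pieces of the form $[x,\cdot]$ applied to central elements, which vanish.

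If this direct route fails, I would instead absorb $\theta_0$ into the extremal part. The term $\theta_0 m$ has the form $[m,\lambda]$-type with $\lambda\in\mathcal{Z}$, which shows that the possible obstruction is itself central-valued or extremal. This preserves the shape of the conclusion.

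Finally, because $N=0$ forces $f\varphi(e,\dots,e)e=0$, we get $X_0=e\varphi(e,\dots,e)f$. This yields the stated formula $\kappa(x_1,\dots,x_n)=[x_1,[x_2,\cdots,[x_n,e\varphi(e,\dots,e)f]]]$. The statement that $\psi$ vanishes on commutators follows verbatim from the last paragraph of the proof of Theorem \ref{xxsec4.1}.
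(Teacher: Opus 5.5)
Your reduction is the one the paper intends; the paper gives no argument beyond ``applying our main theorems''. Several of your checks are fine: hypotheses (1), (2), (5); the standard form of special pairs via the derivation $a+m+b\mapsto\mathcal{F}(m)$; and $X_0=e\varphi(e,\dots,e)f$ because $N=0$. You are also right that hypothesis (3) of Theorem \ref{xxsec4.1} is not available. The $MN=0=NM$ branches of Lemmas \ref{xxsec3.11} and \ref{xxsec3.15} use it to kill $\theta_0,\alpha_0,\gamma$.

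The gap is that your proposal never supplies a working substitute for it. The first-slot identity $\psi([e,m],e,e)=[\psi(e,e,e),m]+[e,\psi(m,e,e)]$, like its third-slot analogue with $m=[m,f]$, only says that $\psi(m,e,e)$ lies in $M$. It is satisfied by $\theta m$ for every $\theta\in\mathcal{Z}(A)$, so identities involving only $e$, $f$ and centrality cannot force $\theta_0=0$.

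Your fallback is wrong as well. For central $\lambda$ one has $[m,\lambda]=0$, while $\theta_0m=[\theta_0e,m]$ comes from $\theta_0e$. This is not an admissible extremal element, since $[[e,m],\theta_0e]=-\theta_0m\neq0$. Absorbing it would also change $X_0$, contradicting the asserted formula with $e\varphi(e,\dots,e)f$.

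The missing idea is that hypothesis (2) alone kills these scalars. Incidentally, (2) already implies (4), contrary to what you wrote: a commutative $A\neq0$ is a nonzero central ideal of itself.

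\begin{itemize}
\item Suppose $A$ has no nonzero central ideals. By (3.41), $\psi(a_1,a_2,m)=a_1a_2\psi(e,e,m)=a_2a_1\psi(e,e,m)$. Equivalently, (LB) with $x=y=e$, $u=a_1$, $v=a_2$ gives $[\theta_0m,[a_1,a_2]]=0$ for \emph{all} $a_1,a_2\in A$, not just one noncommuting pair.
\item Since $MA=0$, this reads $[A,A]\theta_0M=0$. Faithfulness gives $\theta_0[A,A]=0$, hence $[\theta_0a,a']=\theta_0[a,a']=0$.
\item So $\theta_0A$ is a central ideal of $A$, and therefore $\theta_0=\theta_0 1_A=0$.
\item The same argument works for $\alpha_0$ and $\gamma$. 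Use (LB) with $x=e,\ y=y_1$, respectively $x=m,\ y=m_1$. The extra terms vanish by the parts of Lemma \ref{xxsec3.11} already established, together with $[e,a_2]=0$ and $[m,m_1]=0$. This gives $\alpha_0[A,A]=0=\gamma[A,A]$, so $\alpha_0=\gamma=0$.
\item If instead $B$ has no nonzero central ideals, work with $b_1,b_2\in B$. One gets $M\eta(\theta_0)[B,B]=0$, so $\eta(\theta_0)B$ is a central ideal of $B$. Hence $\eta(\theta_0)=0$ and $\theta_0=0$.
\end{itemize}

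With this change, Proposition \ref{xxsec3.1} and then the induction in Theorem \ref{xxsec4.1} go through for $\mathcal{T}$ without hypotheses (3) and (4). That is exactly what the corollary needs.
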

\end{tcolorbox}

\vspace{4mm}
\noindent{\bf Declaration of Competing Interest}
\vspace{3mm}

The authors declare that they have no known competing financial interests or personal relationships that could have appeared to influence the work reported in this paper.
\vspace{4mm}

\noindent{\bf Data availability} 
\vspace{3mm}

No data was used for the research described in the article.
\vspace{4mm}

\noindent{\bf Acknowledgements} 
\vspace{3mm}

The authors would like to express their thanks to the referees and the editors for their valuable comments and suggestions that improves the expositions clearly.
\vspace{4mm}

\noindent{\bf Funding} 
\vspace{3mm}

The work of the first author was supported by the Open Research Fund of Hubei Key Laboratory of Mathematical Sciences
(Central China Normal University), Wuhan ,430079, P.R. China,
Youth fund of Anhui Natural Science Foundation (Grant No.2008085QA01),
Key projects of University Natural Science Research Project of Anhui Province (Grant No. KJ2019A0107)

\bibliographystyle{amsplain}

\end{document}